\date{}
\newtheorem{theorem}{Theorem}[section]
\newtheorem{rmk}{Remark}[section]
\newtheorem{cor}{Corollary}[section]
\newtheorem{prop}{Proposition}[section]
\newtheorem{lem}{Lemma}[section]
\begin{document}
	\title[Singular Choquard elliptic problems involving two nonlocal nonlinearities]{ Singular Choquard elliptic problems involving two nonlocal nonlinearities via the nonlinear Rayleigh quotient}
	\vspace{1cm}
	
	\author{Edcarlos D. Silva}
	\address{Edcarlos D. Silva \newline  Universidade Federal de Goias, IME, Goi\^ania-GO, Brazil}
	\email{\tt edcarlos@ufg.br}
	
	\author{Marlos R. da Rocha}
	\address{Marlos R. da Rocha \newline Universidade Federal de Goias, IME, Goi\^ania-GO, Brazil }
	\email{\tt marlosrodrigues14@gmail.com}
	
	\author{Jefferson S. Silva}
	\address{Jefferson S. Silva \newline Universidade Federal de Goias, IME, Goi\^ania-GO, Brazil }
	\email{\tt 	jeffersonsantos.mat@gmail.com}

	\subjclass[2010]{35A01 ,35A15,35A23,35A25} 
	
	\keywords{Singular Problem, Choquard problem, Nonlinear Rayleigh quotient, Nonlocal elliptic problems}
	\thanks{The first author was partially supported by CNPq with grants 309026/2020-2}
	\thanks{Corresponding author: Edcarlos D. Silva, email: edcarlos@ufg.br}

	\begin{abstract}
		In the present work we shall consider the existence and multiplicity
		of solutions for nonlocal elliptic singular problems where the nonlinearity is driven by two convolutions terms. More specifically, we shall consider the following Choquard type problem:		
		\begin{equation*}
			\left\{\begin{array}{lll}
				-\Delta u+V(x)u=\lambda(I_{\alpha_1}*a|u|^q)a(x)|u|^{q-2}u+\mu(I_{\alpha_2}*|u|^p)|u|^{p-2}u \\
				u\in H^1(\mathbb{R}^N)
			\end{array}\right.
		\end{equation*}
		where $\alpha_2<\alpha_1$; $\alpha_1,\alpha_2\in(0,N)$ and $0<q<1$; $p\in\left(2_{\alpha_2},2^*_{\alpha_2} \right)$. Recall also that $2_{\alpha_j}=(N+\alpha_j)/N$ and $2^*_{\alpha_j}=(N+\alpha_j)/(N-2), j=1,2$. Furthermore, for each $q\in(0,1)$, by using the Hardy-Littlewood-Sobolev inequality we can find a sharp parameter $\lambda^*> 0$ such that our main problem has at least two solutions using the Nehari method. Here we also use the Rayleigh quotient for the following scenarios $\lambda \in (0,  \lambda^*)$ and $\lambda = \lambda^*$. Moreover, we consider some decay estimates ensuring a non-existence result for the Choquard type problems in the whole space.

	\end{abstract}
	
	\maketitle
	
	\section{Introduction}
	
	In the present work, we shall investigate the Choquard problem with two convolution terms defined in the entire space. Our focus will be on analyzing the following nonlocal elliptic problem:
	\begin{equation}\label{p1}\tag{$P_\lambda$}
		\left\{\begin{array}{lll}
			-\Delta u+V(x)u=\lambda(I_{\alpha_1}*a|u|^q)a(x)|u|^{q-2}u+\mu(I_{\alpha_2}*|u|^p)|u|^{p-2}u \\
			u\in H^1(\mathbb{R}^N).
		\end{array}\right.
	\end{equation}
	Recall also that $p>q$ and $\lambda > 0, \mu > 0, \alpha_2<\alpha_1$; $\alpha_1,\alpha_2\in(0,N),N \geq 3$; $p\in\left(2_{\alpha_2},2^*_{\alpha_2} \right)$; $q\in\left(0,1 \right)$. Throughout this work, we consider $2_{\alpha_j}=(N+\alpha_j)/N$ and $2^*_{\alpha_j}=(N+\alpha_j)/(N-2)$ for $j =1, 2$. The potential $V : \mathbb{R}^N \rightarrow \mathbb{R}$ is a continuous function. Moreover, we shall consider some assumptions for the potentials $V$ and $a$. It is worth noting that the Riesz potential can be expressed as follows:
	$$I_\alpha(x)=\frac{A_\alpha(N)}{|x|^{N-\alpha}}, x\in\mathbb{R}^N\quad \text{and}\quad A_\alpha(N)=\frac{\Gamma(\frac{N-\alpha}{2})}{\Gamma(\frac{\alpha}{2})\pi^{\frac{N}{2} }2^{\alpha}}$$
	where $\alpha \in (0, N)$ and $\Gamma$ represents the Gamma function as described in \cite{Moroz1}. It is important to mention that there exist several important works on singular elliptic problems, see for instance Fulks-Maybee \cite{fulks}. After this pioneer work many attention of researchers has been considered in the last years. This research demonstrated that if we consider a bounded region $\Omega \subset \mathbb{R}^3$ occupied by an electrical conductor, the function $u(x, t)$, representing the temperature at point $x \in \Omega$ and time $t$, satisfies the following equation:
	$$cu_t-k\Delta u=\frac{E^2(x,t)}{t^{\gamma}}.$$
	Here, $E(x, t)$ describes the local voltage drop, $t^{\gamma}$ with $\gamma > 0$ represents the electrical resistivity, $c$ and $k$ correspond to the specific heat and thermal conductivity of the conductor, respectively. It is important to say that Choquard problems involving a singular nonlinearity have been also
	extensively studied in recent years. For example, the authors in \cite{chowang} investigate the following problem:
	\begin{eqnarray}\left\{ \begin{array}{lll}
			\mathcal{L}(u)=\lambda\frac{f(x)}{u^{\beta}}+\left(\displaystyle\int_{\mathbb{R}^N}\frac{g(y)|u(y)|^q}{|x-y|^{\mu}}dy\right)g(x)u^{q-1}\quad&&\text{in}\ \mathbb{R}^N \nonumber \\
			u>0 &&\text{in}\ \mathbb{R}^N \nonumber
		\end{array}\right.	
	\end{eqnarray}
	where $N\geq2$, $1 < p < N/s$ with $s\in(0, 1)$, $1 < q<\frac{p}{2}\cdot\frac{2N-\mu}{N-ps}$, $\mu\in(0,N)$ and $0<\beta<1$. In that work, the author studied the multiplicity of solutions for fractional Kirchhoff equations with
	Choquard and singular nonlinearities. Since the energy functional associated to this kind of problem in general is not differentiable on $D^{s,p}(\mathbb{R}^N)$, the usual critical point theory is not available. Thus, the authors use
	the Nehari manifold approach to get the existence of two solutions for the problem. Another example of a singular problem can be seen in \cite{sun}, where the authors studied the following problem:
	\begin{eqnarray}
		\Delta u+\lambda u^{\beta}+p(x)u^{-\gamma}&=&0, \quad \text{in}\ \Omega \nonumber \\
		u&>&0, \quad \text{in}\ \Omega \nonumber \\
		u&=&0,\quad \text{on}\ \partial\Omega
	\end{eqnarray}
	where $\Omega\subset\mathbb{R}^N$ is a bounded domain, $p:\Omega\rightarrow\mathbb{R}$ is a given non-negative non-trivial function in $L^2(\Omega)$, $1<\beta<2^*-1$, $0<\gamma<1$ are two constants, $2^*=\frac{2N}{N-2}$ is the standard critical exponent, $N\geq 3$ and $\lambda>0$ is a real parameter. The goal in that work is to show how variational methods can be used to
	establish some existence and multiplicity results for singular problems. The main idea in that work was investigate a suitable minimization problem for the functional $I_{\lambda}$ given by
	$$I_{\lambda}(u)=\frac{1}{2}\int_{\Omega}|\nabla u|^2 dx-\frac{\lambda}{\beta+1}\int_{\Omega}|u|^{\beta+1} dx-\frac{1}{1-\gamma}\int_{\Omega}p(x)|u|^{1-\gamma} dx.$$
	In particular, the authors found the combined effects of singular and superlinear nonlinearities
	changing considerably the structure of the solution set. In recent years, many semilinear elliptic problems have been considered with nonlocal terms and general nonlinearities. Several works have been dedicated on this subject, see \cite{claudianor, claudianor G, CPAA,cingolani,Xl,moroz2,Moroz01,seok,zhang}. Most of them are focus on the Choquard problem where there exists a non-homogeneous general function $F$. The proofs presented in these works rely on a Pohozaev type identity  for the following problem: \begin{equation}\label{int1}
		\begin{array}{lll}
			-\Delta u+u=(I_{\alpha}*|u|^p)|u|^{p-2}u \quad \text{in } \mathbb{R}^N.
		\end{array}
	\end{equation}
	Furthermore, the authors have also proved the regularity, positivity, and radial symmetry of solutions, along with the existence of ground state solutions, for the Choquard equation. For more details on this topic we refer the interested readers to \cite{claudianor2,claudianor3,Sc,4moroz01, Moroz1,31moroz01,fractional}. It is also important to mention that the Choquard term involving a singular problem has studied with the critical term. For example, in \cite{india} the authors studied the following problem:
	\begin{equation}
		-\Delta u=\lambda u^{-q}+\left(\int_{\Omega}\frac{|u|^{2^*_{\mu}}}{|x-y|^{\mu}}dy\right)|u|^{2^*_{\mu}-2}u, \quad u>0\text{ in } \Omega, \quad u=0\text{ on }\partial\Omega
	\end{equation}
	where $\Omega$ is a bounded domain in $\mathbb{R}^N$ with smooth
	boundary $\partial\Omega$, $N>2$, $\lambda>0$, $0<q<1$, $0<\mu<N$ and $2^*_{\mu}=\frac{2N-\mu}{N-2}$. In that paper, was study the multiplicity
	results with convex-concave type critical growth and singular nonlinearity. The main difficulty in that work is treat the
	singular nonlinearity along with critical exponent in the sense of Hardy-Littlewood-Sobolev inequality which is nonlocal. Furthermore, the associated energy functional is not anymore differentiable
	due to presence of singular nonlinearity. Hence, the usual minimax theorems are not applicable in general. Recall also that the critical exponent term being nonlocal give us one more difficulty in order to study the compactness of Palais-Smale sequences. Notice also that elliptic partial differential equations with singular nonlinearity have been also extensive studied by many authors, see for instance \cite{orsina,canino,haitao,laser,Giaco} and references therein. In all of these works, the existence of solutions to the singular problem has been established through some approximation techniques. In those works the solution arised from a working space which varies depending on the power of the singular term.
	
	The Choquard-type equations involving two Choquard terms have recently also considered by many works taking into account diferent types of nonlinearities. For instance, in \cite{choty}, the author investigates the following nonlocal problem:
	\begin{equation}
		-\Delta u+(V+\lambda)u=(I_{\alpha}*|u|^p)|u|^{p-2}u+\mu(I_{\alpha}*|u|^q)|u|^{q-2}u \quad\text{in}\quad \mathbb{R}^N
	\end{equation}
	having a prescribed mass $\int_{\mathbb{R}^N}u^2=a>0$,  where $\lambda\in\mathbb{R}$ will arise as a Lagrange multiplier, $N\geq 3$, $I_{\alpha}$ is the Riesz potential,
	$\alpha\in(0,N)$, $p\in(\overline{\alpha},2^*_{\alpha}]$, $q\in(\overline{\alpha},2^*_{\alpha})$, $\overline{\alpha}=(N+\alpha+2)/N$ is the mass critical exponent, $2^*_{\alpha}=(N+\alpha)/(N-2)$  is the
	Hardy–Littlewood–Sobolev upper critical exponent and $\mu > 0$ is a constant. 
	Under appropriate conditions on the potential $V$, the Choquard-type equation mentioned above admits a positive normalized ground state solution by using comparison arguments. Furthermore, assuming that $p = 2^*_{\alpha}$, the authors require a larger value for $\mu$ and the Hardy–Littlewood–Sobolev subcritical approximation method is employed. Moreover, the author introduces a new result regarding the regularity of solutions and establishes a Pohozaev identity for a more general Choquard-type equation. Here we also refer the reader to \cite{choty2} where the authors proved the existence of ground states for the following critical Choquard type problem:
	\begin{equation}
		-\Delta u +2u=\lambda u+\alpha(I_\alpha*|u|^q)|u|^{q-2}u+(I_{\mu}*|u|^{2^*_{\mu}})|u|^{2^*_{\mu}} \quad\text{in}\ \mathbb{R}^N
	\end{equation}
	with prescribed mass
	$\int_{\mathbb{R}^N} u^2=c^2$, where $N\geq 3$, $\alpha > 0$, $0 <\mu< N$, $I_{\mu}$ is the
	Riesz potential, $\lambda\in\mathbb{R}$, and $(2N-\mu)/N<q<2^*_{\mu}=(2N-\mu)/(N-2)$. The author establishes that the critical Hartree equation has normalized ground states and mountain-pass type solutions when perturbed by a $L^2$-subcritical term with an exponent that satisfies $(2N-\mu)/N < q < (2N-\mu+2)/N$. Furthermore, the authors the existence of ground states of mountain-pass type when the problem is perturbed by a $L^2$-critical or $L^2$-supercritical term. For further references on Choquard-type equations with two convolution terms, we refer the reader to \cite{choty3, choty4, choty5}.

	In the present work the main goal is to find sharp conditions on the parameter $\lambda$ such that Problem \eqref{p1}
	admits at least two nontrivial solutions whenever $\lambda\in(0,\lambda^*]$. The first difficult arises from the presence of the function $a$ in one of the convolutions terms. Hence, by using the Hardy-Littlewood-Sobolev inequality, we prove that our energy functional is well-defined and continuous. Moreover, the energy functional for Problem \eqref{p1} is non-differentiable, which creates an obstacle in order to prove the existence of weak solutions using the Nehari method. Here we overcome this difficulty by using the Ekland Variational Principle. It is important to stress that here we borrow some ideas discussed in \cite{sun}. Furthermore, the lack of differentiability is another difficulty in order to show that minimizer on the Nehari manifold is also a critical point to the energy functional. The main tool in this case is to consider the existence of a curve $f$ belonging to the Nehari manifold $\mathcal{N}^-$ and another curve $h$ belonging to the Nehari manifold $\mathcal{N}^+$, see Section \ref{pre} ahead. The existence of these curves, along with the application of the Ekeland Variational Principle and the Fatou's Lemma, ensure the existence of at least two weak solutions for our main problem.
	
	The second main difficulty is to establish the existence of a positive number $\lambda^* > 0$ such that for each $\lambda = \lambda^*$, the problem \eqref{p1} possesses at least two solutions.  Namely, we find at least two solutions $u \in \mathcal{N}^+$ and $v \in \mathcal{N}^-$ with $\lambda = \lambda^*$. The difficulty in ensuring these solutions arises from the fact that the set $\mathcal{N}^0$ is non-empty. The set $\mathcal{N}^0$ contains the all inflections points for the fibering map associated to the our energy function which implies that the Lagrange Multiplier Theorem does not work anymore in $\mathcal{N}^0$ for $\lambda = \lambda^*$. To overcome this problem, we establish a non-existence result for solutions $u \in \mathcal{N}^0$. The third main challenge encountered in the present work is the presence of the singular term which is also a nonlocal nonlinearity involving the Choquard term. This term is also linked to the potential $a$. This is another difficult in order to ensure existence of weak solutions. The same difficulty appears also for the non-existence results in the Nehari set $\mathcal{N}^0$ for $\lambda=\lambda^*$. To overcome this difficulty, we consider a extra assumption on $a$ where some fine decay estimates are considered the non-existence result.

	\subsection{Assumptions and main theorems}\label{assu}
	In order to investigate the existence and nonexistence of nontrivial weak solutions to problem \eqref{p1}, we shall explore the values of the parameters $\mu$ and $\lambda$. The main feature here is to establish some sharp conditions in order to apply Nehari method. In order to do that we shall consider the following assumptions:
	\begin{itemize}
		\item [$(h_1)$] The potential $V:\mathbb{R}^N\rightarrow\mathbb{R}$ is continuous and there exists a constant $V_0>0$ such that $V(x)\geq V_0$ for all $x\in\mathbb{R}^N$;
		\item [$(h_2)$] For each $M>0$ it holds that $|\{x\in\mathbb{R}^N : V(x)\leq M \}|<+\infty$;
		\item [$(h_3)$]The potential $a$ is a positive function such that $a\in L^s(\mathbb{R}^N)$ where $s=2N/(N+\alpha_1-qN)$.
	\end{itemize}
	Now, we define
	$$X=\left\{u\in H^1(\mathbb{R}^N): \int_{\mathbb{R}^N}V(x)u^2 dx<+\infty \right\}.$$
	Notice that $X$ is a Hilbert space endowed with the norm and inner product as follows:
	$$\|u\|^2=\int_{\mathbb{R}^N}(|\nabla u|^2+V(x)u^2) dx,\ \langle u,v \rangle=\int_{\mathbb{R}^N}(\nabla u\nabla v+ V(x)uv) dx,\quad u,v\in X.$$
	It is worthwhile to emphasize that the energy functional $J:X\rightarrow\mathbb{R}$ associated to problem \eqref{p1} is given by
	$$J(u)=\frac{1}{2}\|u\|^2-\frac{\lambda}{2q}\int_{\mathbb{R}^N}(I_{\alpha_1}*a|u|^q)a(x)|u|^qdx-\frac{\mu}{2p}\int_{\mathbb{R}^N}(I_{\alpha_2}*|u|^p)|u|^pdx.$$
	It is not hard to see that the energy functional $J: X \to \mathbb{R}$ is continuous. Furthermore, taking into account the singular term, we mention that $J$ has Gateaux derivative only in some directions. Recall also that a function $u\in X$ is a critical point for the functional $J$ if, and only if, $u \in X$ is a weak solution to the Choquard Problem \eqref{p1}. Namely, a function $u\in X$ is said to be a weak solution for problem \eqref{p1} whenever
	$$\langle u,\varphi \rangle -\lambda\int_{\mathbb{R}^N}(I_{\alpha_1}*a|u|^q)a(x)|u|^{q-2}u\varphi dx-\mu\int_{\mathbb{R}^N}(I_{\alpha_2}*|u|^p)|u|^{p-2}u\varphi dx = 0$$
	holds for all $\varphi \in X$. Although the functional $J$ is only of class $C^0(X,\mathbb{R})$, we can verify that there exists the Gateaux derivative in the direction $u$ as follows:
	$$J^\prime(u)u=\|u\|^2-\lambda \int_{\mathbb{R}^N}(I_{\alpha_1}*a|u|^q)a(x)|u|^qdx-\mu \int_{\mathbb{R}^N}(I_{\alpha_2}*|u|^p)|u|^pdx.$$
	Similary, we also mention that 
	$$J^{\prime\prime}(u)(u,u)=2\|u\|^2-2q\lambda \int_{\mathbb{R}^N}(I_{\alpha_1}*a|u|^q)a(x)|u|^qdx-2p\mu \int_{\mathbb{R}^N}(I_{\alpha_2}*|u|^p)|u|^pdx.$$
	It should be noted that the Sobolev embedding $X\hookrightarrow L^r(\mathbb{R}^N)$ is a continuous for all $r\in[2,2^*]$. Furthermore, for each $r\in[2,2^*)$, the embedding $X\hookrightarrow L^r(\mathbb{R}^N)$ is compact. For further information on this subject we refer the reader to \cite{wang}.
	Under these conditions, by using some ideas introduced in \cite{yavdat1}, we should use the Nehari method together with the nonlinear Rayleigh quotient for our main problem. Hence, we consider the following Nehari set:
	\begin{equation}
		\mathcal{N}:=\left\{u\in X\setminus\{0\}:\|u\|^2-\mu\int_{\mathbb{R}^N}(I_{\alpha_2}*|u|^p)|u|^q dx=\lambda\int_{\mathbb{R}^N}(I_{\alpha_1}*a|u|^q)a(x)|u|^qdx \right\}.
	\end{equation}
	Under these conditions, we will split the Nehari manifold $\mathcal{N}$ into three disjoint subsets as follows:
	\begin{eqnarray}\label{n3}
		\begin{split}
			\mathcal{N}^+=\{u\in\mathcal{N}:J^{\prime\prime}(u)(u,u)>0\}; \\
			\mathcal{N}^-=\{u\in\mathcal{N}:J^{\prime\prime}(u)(u,u)<0\}; \\
			\mathcal{N}^0=\{u\in\mathcal{N}:J^{\prime\prime}(u)(u,u)=0\}.
		\end{split}
	\end{eqnarray}

	In order to apply the nonlinear Rayleigh quotient we shall use some definitions. To begin with, we need to introduce the following set
	\begin{eqnarray}\label{varepsilon}\mathcal{A}=\left\{u\in X\setminus\{0\};\ \frac{\|u\|^2}{2}-\frac{\mu}{2p}\int_{\mathbb{R}^N}(I_{\alpha_2}*|u|^p)|u|^p dx=\frac{\lambda}{2q}\int_{\mathbb{R}^N}(I_{\alpha_1}*a|u|^q)a(x)|u|^q dx \right\}.
	\end{eqnarray}
	The main objective in this work is to find weak solutions to our main problem using the following minimization problems:
	\begin{equation}\label{c1}
		c_{\mathcal{N}^-}:=\inf\{J(u): u\in\mathcal{N}^-\};
	\end{equation}
	\begin{equation}\label{c2}
		c_{\mathcal{N}^+}:=\inf\{J(u): u\in\mathcal{N}^+\}.
	\end{equation}
	As a first step, we need to show that $c_{\mathcal{N}^-}$ and $c_{\mathcal{N}^+}$ are attained. This kind of result establishes the existence of a weak solution to our main problem. A significant challenge in this study is identify some conditions for the parameters $\mu>0$ and $\lambda>0$ where the minimizers in the Nehari manifold are critical points for the energy functional $J$. It is worth noting that the sets described in \eqref{n3} and \eqref{varepsilon}  are minimization problems. Furthermore, using the Nehari method and the fibering function together with the nonlinear Rayleigh quotient we shall prove that all minimizers in the Nehari manifold give us critical points for the energy functinal $J$. This kind of problem have been widely studied in the recent years, see \cite{CPAA,yavdat2,yavdat0,yavdat1}. It is important to mention that

	$$u\in\mathcal{N} \quad\text{if and only if}\quad\lambda=\frac{\|u\|^2-\mu \displaystyle\int_{\mathbb{R}^N} (I_{\alpha_2}*|u|^p)|u|^p dx}{\displaystyle\int_{\mathbb{R}^N} (I_{\alpha_1}*a|u|^q)a(x)|u|^q dx}.$$
	Similarly, we deduce the following assertion
	$$J(u)=0\quad\text{if and only if}\quad\lambda=\frac{\frac{1}{2}\|u\|^2-\frac{\mu}{2p}\displaystyle\int_{\mathbb{R}^N} (I_{\alpha_2}*|u|^p)|u|^p dx}{\frac{1}{2q}\displaystyle\int_{\mathbb{R}^N} (I_{\alpha_1}*a|u|^q)a(x)|u|^q dx}.$$
	Under these conditions, we consider the functionals $R_n,R_e:X\setminus\{0\}\rightarrow\mathbb{R}$ as follows
	\begin{eqnarray}\label{Rn}R_n(u)=\frac{\|u\|^2-\mu \displaystyle\int_{\mathbb{R}^N} (I_{\alpha_2}*|u|^p)|u|^p dx}{\displaystyle\int_{\mathbb{R}^N} (I_{\alpha_1}*a|u|^q)a(x)|u|^q dx}
	\end{eqnarray}
	and
	\begin{eqnarray}\label{Re}	
		R_e(u)=\frac{\frac{1}{2}\|u\|^2-\frac{\mu}{2p}\displaystyle\int_{\mathbb{R}^N} (I_{\alpha_2}*|u|^p)|u|^p dx}{\frac{1}{2q}\displaystyle\int_{\mathbb{R}^N} (I_{\alpha_1}*a|u|^q)a(x)|u|^q dx}.
	\end{eqnarray}
	Based on our assumptions we can deduce that $R_n, R_e$ belong to $C^0(X,\mathbb{R})$. Here we also mention that due to the presence of a singular term the functionals given just above are not differentiable in general. On the other hand, we consider the following extremes:
	\begin{eqnarray}
		\lambda^*:=\inf_{u\in X\setminus\{0\}}\sup_{t>0} R_n(tu)\quad\text{and}\quad \lambda_*:=\inf_{u\in X\setminus\{0\}}\sup_{t>0} R_e(tu).
	\end{eqnarray}
	Hence, by using the minimization problem in the Nehari set $\mathcal{N}^+$, we can state our main first results as follows:
	\begin{theorem}\label{theorem1}
		Suppose $(h_1)-(h_3)$ holds. Assume also that $\lambda\in(0,\lambda^*)$. Then the problem \eqref{p1} admits at least one positive weak solution $v\in\mathcal{N}^+$ such that $c_{\mathcal{N}^+}=J(v) < 0.$
	\end{theorem}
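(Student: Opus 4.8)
The proof rests on the fibering analysis of $t\mapsto\phi_u(t):=J(tu)$ for fixed $u\in X\setminus\{0\}$. Writing $A(u)=\int_{\mathbb{R}^N}(I_{\alpha_1}*a|u|^q)a|u|^q\,dx$ and $B(u)=\int_{\mathbb{R}^N}(I_{\alpha_2}*|u|^p)|u|^p\,dx$, one has $\phi_u'(t)=t^{2q-1}\big(g_u(t)-\lambda A(u)\big)$ with $g_u(t)=t^{2-2q}\|u\|^2-\mu t^{2p-2q}B(u)$; since $0<2q<2<2p$, the function $g_u$ is strictly increasing then strictly decreasing, with $g_u(0^+)=0$, $g_u(+\infty)=-\infty$ and a unique maximum, and $\sup_{t>0}R_n(tu)=\max_{t>0}g_u(t)/A(u)$. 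Hence $\lambda<\lambda^*=\inf_{u\neq 0}\sup_{t>0}R_n(tu)$ forces $\lambda A(u)<\max_t g_u(t)$ for every $u\neq 0$, so $\phi_u$ has exactly two critical points $0<t^+(u)<t^-(u)$ with $t^+(u)u\in\mathcal{N}^+$, $t^-(u)u\in\mathcal{N}^-$, and $\phi_u$ strictly decreasing on $(0,t^+(u))$; the same computation gives $\mathcal{N}^0=\emptyset$ for $\lambda<\lambda^*$. Since for $v\in\mathcal{N}^+$ one has $t^+(v)=1$ and $\phi_v(0^+)=0$ with $\phi_v$ strictly decreasing on $(0,1)$, we get $J(v)<0$, whence $\mathcal{N}^+\neq\emptyset$ and $c_{\mathcal{N}^+}<0$. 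Finally, the Hardy--Littlewood--Sobolev inequality together with $(h_3)$ and H\"older's inequality gives $A(u)\le C\|a\|_{L^s}^2\|u\|^{2q}$ and $B(u)\le C'\|u\|^{2p}$; combined with $\|u\|^2=\lambda A(u)+\mu B(u)$ on $\mathcal{N}$ this yields $J(u)=\tfrac{p-1}{2p}\|u\|^2-\tfrac{\lambda(p-q)}{2pq}A(u)\ge\tfrac{p-1}{2p}\|u\|^2-C''\|u\|^{2q}$ on $\mathcal{N}$, which is coercive since $2q<2$; thus $c_{\mathcal{N}^+}>-\infty$ and every minimizing sequence is bounded in $X$.

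Let $(u_n)\subset\mathcal{N}^+$ satisfy $J(u_n)\to c_{\mathcal{N}^+}$; it is bounded, so up to a subsequence $u_n\rightharpoonup u_0$ in $X$, $u_n\to u_0$ in $L^r(\mathbb{R}^N)$ for all $r\in[2,2^*)$ and a.e. Using the compact embeddings, $(h_3)$ and $\big||u_n|^q-|u_0|^q\big|\le|u_n-u_0|^q$ (valid for $0<q<1$), one checks $A(u_n)\to A(u_0)$ and $B(u_n)\to B(u_0)$. If $u_0=0$ then $A(u_n),B(u_n)\to0$, hence $\|u_n\|^2=\lambda A(u_n)+\mu B(u_n)\to0$ and $J(u_n)\to0$, contradicting $c_{\mathcal{N}^+}<0$; so $u_0\neq0$. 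By weak lower semicontinuity $\|u_0\|^2\le\liminf\|u_n\|^2=\lambda A(u_0)+\mu B(u_0)=:L$.

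The decisive step is to show $\|u_n\|\to\|u_0\|$. Suppose, for contradiction, $\|u_0\|^2<L$. Passing to the limit in $\phi_{u_n}'(1)=\|u_n\|^2-\lambda A(u_n)-\mu B(u_n)=0$ and in $\phi_{u_n}''(1)>0$ shows that the limit fibering map $\widetilde{\phi}(t):=\tfrac{t^2}{2}L-\tfrac{\lambda t^{2q}}{2q}A(u_0)-\tfrac{\mu t^{2p}}{2p}B(u_0)$ satisfies $\widetilde{\phi}'(1)=0$ and $\widetilde{\phi}''(1)\ge0$; since $\lambda A(u_0)<\max_t g_{u_0}(t)<\max_t\widetilde{g}(t)$ with $\widetilde{g}(t)=t^{2-2q}L-\mu t^{2p-2q}B(u_0)>g_{u_0}(t)$, the map $\widetilde{\phi}$ has exactly two nondegenerate critical points and $t=1$ is the smaller one. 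As $t^+(u_0)$ lies in the increasing branch of $g_{u_0}$, hence of $\widetilde{g}$ (because $\operatorname{argmax}g_{u_0}<\operatorname{argmax}\widetilde{g}$), and $\widetilde{g}(t^+(u_0))>g_{u_0}(t^+(u_0))=\lambda A(u_0)=\widetilde{g}(1)$, we get $t^+(u_0)>1$. Therefore $\phi_{u_0}$ is strictly decreasing on $(0,t^+(u_0))\ni1$, so $J(t^+(u_0)u_0)=\phi_{u_0}(t^+(u_0))<\phi_{u_0}(1)<\widetilde{\phi}(1)=\lim J(u_n)=c_{\mathcal{N}^+}$, contradicting $t^+(u_0)u_0\in\mathcal{N}^+$. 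Hence $u_n\to u_0$ strongly in $X$; passing to the limit in the Nehari identity gives $u_0\in\mathcal{N}$, and since $\mathcal{N}^0=\emptyset$ while $J''(u_n)(u_n,u_n)>0$ yields $J''(u_0)(u_0,u_0)\ge0$, we conclude $u_0\in\mathcal{N}^+$ and $J(u_0)=c_{\mathcal{N}^+}<0$. Replacing $u_0$ by $|u_0|$ (which changes neither $J$ nor $A,B$) we may assume $u_0\ge0$.

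It remains to show the minimizer $u_0$ is a weak solution of \eqref{p1}, the delicate point being that $J\notin C^1$ because of the singular Choquard term. The plan is: for $\varphi\in X$ with $\varphi\ge0$ and $\varepsilon>0$ small, use the continuous projection $t_\varepsilon:=t^+(u_0+\varepsilon\varphi)\to1$ onto $\mathcal{N}^+$ furnished by Section~\ref{pre}; minimality gives $J\big(t_\varepsilon(u_0+\varepsilon\varphi)\big)\ge J(u_0)$, and expanding this inequality, dividing by $\varepsilon$ and letting $\varepsilon\to0^+$ — controlling the singular integral by Fatou's Lemma — yields $\langle u_0,\varphi\rangle-\lambda\int_{\mathbb{R}^N}(I_{\alpha_1}*a u_0^q)a u_0^{q-1}\varphi\,dx-\mu\int_{\mathbb{R}^N}(I_{\alpha_2}*u_0^p)u_0^{p-1}\varphi\,dx\ge0$ for all $\varphi\ge0$. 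Choosing $\varphi=u_0$ and using $u_0\in\mathcal{N}$ forces equality, and a standard truncation argument (testing with $(u_0+\varepsilon\psi)^+$) extends the identity to all $\psi\in X$, so $u_0$ is a weak solution; finally $-\Delta u_0+V(x)u_0\ge0$ and the strong maximum principle give $u_0>0$. The main obstacle throughout is precisely this last part — making the variational machinery function around the non-differentiable singular nonlinearity — which is exactly where the auxiliary curves in $\mathcal{N}^{\pm}$, Ekeland's Variational Principle and Fatou's Lemma (as announced in the Introduction) are needed.
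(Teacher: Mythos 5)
Your structural work — the fibering analysis, $\mathcal{N}^0=\emptyset$ for $\lambda<\lambda^*$, coercivity on $\mathcal{N}$, the proof that $c_{\mathcal{N}^+}<0$ via monotonicity of $\phi_v$ on $(0,t^+(v))$, and the strong convergence of a minimizing sequence — is correct and runs parallel to the paper's Lemmas \ref{coercive}, \ref{2.8} and \ref{2.9}. Your strong-convergence step is in fact argued somewhat differently (comparison of $\phi_{u_0}$ with the limit fibering map $\widetilde{\phi}$ built from $L=\lim\|u_n\|^2$, rather than the paper's use of weak lower semicontinuity of $v\mapsto J'(v)v$), and that variant checks out: the inequalities $\widetilde{g}'(1)>0$, $\operatorname{argmax}g_{u_0}<\operatorname{argmax}\widetilde{g}$ and $\widetilde{g}(t^+(u_0))>\widetilde{g}(1)$ do force $t^+(u_0)>1$ and the contradiction $J(t^+(u_0)u_0)<c_{\mathcal{N}^+}$.

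The genuine gap is in the last paragraph, which is a plan rather than a proof, and the step it omits is precisely the one the paper spends most of Proposition \ref{p2.8} on. When you write "expanding $J\bigl(t_\varepsilon(u_0+\varepsilon\varphi)\bigr)\ge J(u_0)$, dividing by $\varepsilon$ and letting $\varepsilon\to0^+$", you implicitly need the difference quotient $(t_\varepsilon-1)/\varepsilon$ to have a finite (at least subsequential) limit, and you cannot get this from the Implicit Function Theorem in the $\varepsilon$-variable, because $\varepsilon\mapsto A(u_0+\varepsilon\varphi)$ is exactly the term whose differentiability is in doubt (its formal derivative involves $u_0^{q-1}\varphi$, which need not be integrable a priori). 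The paper's Lemma \ref{curva2} only gives continuity of the projection, and the boundedness of the one-sided derivative $h_k'(0)$ is then proved by hand: ruling out $+\infty$ uses the minimality inequality combined with the Nehari identity to isolate $h_k'(0)$ against a coefficient $(1-q)\|u_k\|^2-(p-q)\mu B(u_k)$ that is bounded away from zero, and ruling out $-\infty$ uses the Nehari identity together with the one-sided monotonicity $A(u_0+\varepsilon\varphi)\ge A(u_0)$ for $\varphi\ge0$. Only after this control is in place can Fatou's Lemma be applied to $\liminf_{\varepsilon\to0^+}\varepsilon^{-1}[A(u_0+\varepsilon\varphi)-A(u_0)]\ge 2qA'(u_0)\varphi$ and the supersolution inequality extracted; the subsequent passage to arbitrary test functions via $(u_0+\varepsilon\psi)^+$ is fine as you describe. (A secondary remark: your plan uses direct minimality at the attained minimizer instead of Ekeland's principle applied to a minimizing sequence, as the paper does; since you have already shown the infimum is attained, that substitution is legitimate in principle, but it does not remove the need for the difference-quotient bound above.)
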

	Now, by using the minimization problem in the Nehari set $\mathcal{N}^-$, we also prove the following result:
	\begin{theorem}\label{theorem2}
		Suppose $(h_1)-(h_3)$ holds. Assume also that $\lambda\in(0,\lambda^*)$. Then the problem \eqref{p1} admits at least one weak positive solution $u\in\mathcal{N}^-$ such that $c_{\mathcal{N}^-}=J(u)$. Furthermore, we obtain the following statements:
		\begin{itemize}
			\item[i)] For each $\lambda\in(0,\lambda_*)$ we obtain that $J(u)>0$;
			\item [ii)] For each $\lambda=\lambda_*$ we infer that $J(u)=0$;
			\item [iii)] For each $\lambda\in(\lambda_*,\lambda^*)$ we deduce that $J(u)<0$.
		\end{itemize}
	\end{theorem}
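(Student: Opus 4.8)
\textbf{Proof proposal for Theorem \ref{theorem2}.}

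The plan is to follow the variational scheme on the Nehari submanifold $\mathcal{N}^-$, in parallel with the proof of Theorem \ref{theorem1}, and then read off the sign of the energy at the minimizer via the extremal value $\lambda_*$. First I would establish that $\mathcal{N}^-$ is nonempty and that $J$ is bounded from below on $\mathcal{N}^-$ for $\lambda\in(0,\lambda^*)$; this uses the fibering map analysis, namely that for $u\in X\setminus\{0\}$ the map $t\mapsto J(tu)$ has a local maximum point $t^-(u)$ with $t^-(u)u\in\mathcal{N}^-$ precisely when $\sup_{t>0}R_n(tu)>\lambda$, together with the Hardy--Littlewood--Sobolev inequality and assumption $(h_3)$ to control the two nonlocal terms from above in terms of $\|u\|$. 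The boundedness from below on $\mathcal{N}^-$ is delicate because $J$ is not coercive on all of $X$; I would use the Nehari constraint to eliminate $\|u\|^2$ and reduce $J$ restricted to $\mathcal{N}^-$ to a functional of the two nonlocal integrals, then show the resulting expression stays bounded below using $0<q<1<p$.

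Next I would take a minimizing sequence $(u_k)\subset\mathcal{N}^-$ for $c_{\mathcal{N}^-}$, show it is bounded in $X$ (again via the Nehari identity and HLS), extract a weakly convergent subsequence $u_k\rightharpoonup u$ in $X$, and exploit the compact embedding $X\hookrightarrow L^r(\mathbb{R}^N)$ for $r\in[2,2^*)$ to pass to the limit in the nonlocal terms; the HLS inequality converts $L^r$-convergence into convergence of $\int(I_{\alpha_j}*\cdot)\cdot$. The main obstacle, as the introduction itself flags, is that $J$ is merely $C^0$ because of the singular term $|u|^q$ with $q<1$: I cannot directly apply the Lagrange multiplier rule on $\mathcal{N}^-$. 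Here I would invoke the Ekeland Variational Principle on $\mathcal{N}^-$ to produce an almost-critical minimizing sequence, then use the auxiliary curves $f$ (valued in $\mathcal{N}^-$) and $h$ (valued in $\mathcal{N}^+$) mentioned in the introduction to test the Gateaux derivative of $J$ in admissible directions, and Fatou's Lemma to handle the lower semicontinuity of the singular term $\int(I_{\alpha_1}*a|u|^q)a|u|^q$. Combining these gives that the weak limit $u$ is a nonzero critical point of $J$, hence a weak solution of \eqref{p1}; strong convergence $u_k\to u$ then follows from $\|u_k\|\to\|u\|$, so $c_{\mathcal{N}^-}=J(u)$ and $u\in\mathcal{N}^-$ (the strict inequality $J''(u)(u,u)<0$ survives because $\mathcal{N}^0$ is empty for $\lambda<\lambda^*$). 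Positivity of $u$ is obtained by replacing $u$ with $|u|$, noting the energy only involves $|u|$, and applying the strong maximum principle to the equation.

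Finally, for the three sign alternatives I would use the characterization of $R_e$ and the definition $\lambda_*=\inf_{u}\sup_{t>0}R_e(tu)$. Recall from the identities in the excerpt that $J(u)=0$ iff $\lambda=R_e(u)$, and more generally the sign of $J(tu)$ at the maximum point $t^-(u)$ is governed by comparing $\lambda$ with $\sup_{t>0}R_e(tu)$: if $\lambda<\sup_{t>0}R_e(t^-(u)u)$-type quantities then $J>0$, etc. Concretely, for $u\in\mathcal{N}^-$ one has $J(u)>0 \iff \lambda<R_e(u)$ on the relevant branch, and the infimum defining $\lambda_*$ together with the fact that the minimizer of $c_{\mathcal{N}^-}$ also (nearly) realizes the outer infimum lets me conclude: for $\lambda\in(0,\lambda_*)$ every point of $\mathcal{N}^-$ has $J>0$ so in particular $J(u)>0$; at $\lambda=\lambda_*$ the minimizer sits exactly on the zero level, giving $J(u)=0$; and for $\lambda\in(\lambda_*,\lambda^*)$ one can find competitors in $\mathcal{N}^-$ with negative energy, forcing $c_{\mathcal{N}^-}=J(u)<0$. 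The technical point to be careful about here is matching the outer infimum in $\lambda_*$ with the Nehari minimization, i.e. showing the same direction $u/\|u\|$ that minimizes $\sup_t R_e(tu)$ gives the minimizer on $\mathcal{N}^-$; this requires the monotonicity/uniqueness properties of the fibering map $t\mapsto R_e(tu)$ established earlier, which I would state and use without reproving.
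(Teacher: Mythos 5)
Your proposal follows essentially the same route as the paper: minimization of $J$ on $\mathcal{N}^-$ using coercivity obtained from the Nehari constraint, strong convergence of minimizing sequences via the fibering-map comparison, Ekeland's principle combined with the implicit-function curve $f$ and Fatou's lemma to show the minimizer is a weak solution, and the sign trichotomy read off by comparing $R_n$ and $R_e$ at $t^{n,-}(u)$ according to its position relative to $t_e(u)$. One small remark: the ``matching of directions'' you flag as delicate in case (ii) is not actually needed, since the paper argues by a two-sided squeeze --- the minimizer of $\Lambda_e$ furnishes a zero-energy competitor in $\mathcal{N}^-$ (so $c_{\mathcal{N}^-}\leq 0$), while $\Lambda_e(v)\geq\lambda_*$ for \emph{every} $v\in\mathcal{N}^-$ forces $t_e(v)\leq t^{n,-}(v)=1$ and hence $J(v)\geq 0$.
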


	At this stage, our objective is to guarantee the existence of weak solutions to problem \eqref{p1} with $\lambda=\lambda^*$. In order to do that we need to prove a nonexistence result for weak solutions $u \in \mathcal{N}^0$ for our main problem where $\lambda=\lambda^*$. In this case, we consider the following extra assumption:
	\begin{itemize}
		\item [$(h_4)$] Consider $\alpha_1\in(N-2,N)$; $p\in[2,\min\left\{2^*_{\alpha_2},2\alpha_1/(N-2)\right\})$,  $\alpha_2\in(N-4,\min\left\{N,(2p-q)(N-2)/2\right\})$ and $2(N-4)/(N-2)<2p-q$. The potential $a$ is a positive function such that $a\in L^{2/(2-q)}(\mathbb{R}^N)$ and $a\notin L^r(\mathbb{R}^N)$, where $r$ satisfies
		\begin{equation}\label{rr}
			\frac{2^* N}{2^*(\alpha_1-\alpha_2)+N(p-q)}<r<\min\left\{\frac{2^*}{p-q},\frac{2^*N}{pN+N(p-q)-2^*\alpha_2}\right\}.
		\end{equation}
	\end{itemize}
	Hence, we can written our next main result in the following form:
	
	\begin{theorem}\label{theorem3}
		Suppose $(h_1)-(h_4)$ holds. Assume also that $\lambda=\lambda^*$. Then, the problem \eqref{p1} does not admit any weak solution $u\in\mathcal{N}^0$. Furthermore, the problem \eqref{p1} has at least two weak solutions $u\in\mathcal{N}^-$ and $v\in\mathcal{N}^+$ such that
		$$c_{\mathcal{N}^-}=J(u)=\inf_{w\in\mathcal{N}^-} J(w); \quad c_{\mathcal{N}^+}=J(v)=\inf_{w\in\mathcal{N}^+}J(w).$$
	\end{theorem}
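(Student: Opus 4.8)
The plan is to establish Theorem \ref{theorem3} in two movements: first, that no weak solution of \eqref{p1} with $\lambda=\lambda^*$ can lie in $\mathcal{N}^0$, and then — using this crucially — the existence of the two solutions at the threshold, obtained by passing to the limit in the solutions produced by Theorems \ref{theorem1} and \ref{theorem2} as $\lambda\uparrow\lambda^*$. Suppose, for contradiction, that $w\in\mathcal{N}^0$ solves \eqref{p1} with $\lambda=\lambda^*$. Writing $A(w)=\int_{\mathbb{R}^N}(I_{\alpha_1}*a|w|^q)a|w|^q\,dx$ and $B(w)=\int_{\mathbb{R}^N}(I_{\alpha_2}*|w|^p)|w|^p\,dx$, the relations $w\in\mathcal{N}$ and $J''(w)(w,w)=0$ force $(1-q)\lambda^*A(w)=(p-1)\mu B(w)$, so $A(w),B(w)\in(0,\infty)$; since the right-hand side of \eqref{p1} is nonnegative and $V\ge V_0>0$, the strong maximum principle gives $w>0$ in $\mathbb{R}^N$. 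I would then use the exponent restrictions in $(h_4)$ — namely $\alpha_1\in(N-2,N)$, $p\in[2,\min\{2^*_{\alpha_2},2\alpha_1/(N-2)\})$, the admissible range of $\alpha_2$, the inequality $2(N-4)/(N-2)<2p-q$, and $a\in L^{2/(2-q)}(\mathbb{R}^N)$ — so that the Hardy--Littlewood--Sobolev inequality together with a bootstrap along the Sobolev scale gives $w\in L^\infty(\mathbb{R}^N)$ and a definite polynomial decay of the nonlocal potentials $I_{\alpha_1}*a|w|^q$ and $I_{\alpha_2}*|w|^p$. Comparing $w$ with the fundamental solution of $-\Delta+V_0$ then yields matching upper and lower decay estimates for $w(x)$ as $|x|\to\infty$. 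Inserting these into the finite integral $A(w)$ and carrying out the H\"older/HLS bookkeeping, together with $a\in L^{2/(2-q)}(\mathbb{R}^N)$, forces $a\in L^r(\mathbb{R}^N)$ for some $r$ in the window \eqref{rr}, contradicting $a\notin L^r(\mathbb{R}^N)$. Hence \eqref{p1} with $\lambda=\lambda^*$ has no weak solution in $\mathcal{N}^0$.

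\emph{Limiting procedure and strong convergence.} Fix $\lambda_j\uparrow\lambda^*$ with $\lambda_j\in(\lambda_*,\lambda^*)$. By Theorems \ref{theorem1} and \ref{theorem2} there exist, for each $j$, positive weak solutions $v_j\in\mathcal{N}^+$ and $u_j\in\mathcal{N}^-$ of \eqref{p1} with $\lambda=\lambda_j$, with $J_{\lambda_j}(v_j)=c_{\mathcal{N}^+}(\lambda_j)<0$ and $J_{\lambda_j}(u_j)=c_{\mathcal{N}^-}(\lambda_j)<0$. Using the Nehari identity $\|u_j\|^2=\lambda_jA(u_j)+\mu B(u_j)$, the sign conditions defining $\mathcal{N}^\pm$, the bound $\lambda_j\le\lambda^*$ and HLS, one obtains uniform estimates $0<c\le\|u_j\|\le C$ and $\|v_j\|\le C$, and, projecting a fixed function along its fibre onto $\mathcal{N}^\pm(\lambda_j)$, that $c_{\mathcal{N}^\pm}(\lambda_j)$ remains bounded away from $0$. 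Up to a subsequence $u_j\rightharpoonup u$ and $v_j\rightharpoonup v$ in $X$, and by the compact embeddings $X\hookrightarrow L^r(\mathbb{R}^N)$, $r\in[2,2^*)$, together with HLS, $A(u_j)\to A(u)$, $B(u_j)\to B(u)$, and likewise for $v$. Testing the equation for $u_j$ against $u$ and using Fatou's lemma on the singular nonlocal term gives $\|u\|^2\ge\lambda^*A(u)+\mu B(u)$, while weak lower semicontinuity and the Nehari identities give $\|u\|^2\le\lim_j\|u_j\|^2=\lambda^*A(u)+\mu B(u)$; hence $\|u_j\|\to\|u\|$, so $u_j\to u$ strongly in $X$ and $u\in\mathcal{N}(\lambda^*)$, and similarly $v_j\to v$ in $X$ with $v\in\mathcal{N}(\lambda^*)$. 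A standard limiting argument, based on strong convergence, the strong maximum principle, and Fatou's lemma (or dominated convergence) for the singular term — as in the proofs of Theorems \ref{theorem1}--\ref{theorem2} — then shows $u$ and $v$ are weak solutions of \eqref{p1} with $\lambda=\lambda^*$.

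\emph{Nontriviality, localization, minimality.} From $\|u_j\|\ge c>0$ and $\|u_j\|\to\|u\|$ we get $u\ne0$; if $v=0$, strong convergence would force $c_{\mathcal{N}^+}(\lambda_j)=J_{\lambda_j}(v_j)\to0$, contradicting the uniform lower bound on $|c_{\mathcal{N}^+}(\lambda_j)|$, so $v\ne0$. Passing to the limit in $J''_{\lambda_j}(u_j)(u_j,u_j)<0$ and $J''_{\lambda_j}(v_j)(v_j,v_j)>0$ (continuous under strong convergence) gives $J''_{\lambda^*}(u)(u,u)\le0$ and $J''_{\lambda^*}(v)(v,v)\ge0$; since $u$ and $v$ are weak solutions, Step~1 rules out $u,v\in\mathcal{N}^0(\lambda^*)$, so $u\in\mathcal{N}^-(\lambda^*)$ and $v\in\mathcal{N}^+(\lambda^*)$. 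It remains to check minimality. On one hand $u\in\mathcal{N}^-(\lambda^*)$ gives $J_{\lambda^*}(u)\ge c_{\mathcal{N}^-}(\lambda^*)$. On the other hand, given $w\in\mathcal{N}^-(\lambda^*)$ we have $\sup_{t>0}R_n(tw)\ge\lambda^*>\lambda_j$, so $w$ projects to some $t_jw\in\mathcal{N}^-(\lambda_j)$ with $t_j\to1$, whence $c_{\mathcal{N}^-}(\lambda_j)\le J_{\lambda_j}(t_jw)\to J_{\lambda^*}(w)$; combining with $J_{\lambda^*}(u)=\lim_j J_{\lambda_j}(u_j)=\lim_j c_{\mathcal{N}^-}(\lambda_j)$ yields $J_{\lambda^*}(u)=c_{\mathcal{N}^-}(\lambda^*)=\inf_{\mathcal{N}^-(\lambda^*)}J_{\lambda^*}$. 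The analogous argument, projecting onto $\mathcal{N}^+(\lambda_j)$, gives $J_{\lambda^*}(v)=c_{\mathcal{N}^+}(\lambda^*)=\inf_{\mathcal{N}^+(\lambda^*)}J_{\lambda^*}$, completing the proof.

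\emph{Where the difficulty sits.} The crux is Step~1: obtaining the sharp decay of solutions of this doubly nonlocal singular Choquard equation — the two Riesz convolutions and the weighted singular term $a(x)|u|^{q-2}u$ make both the bootstrap and the comparison with $-\Delta+V_0$ delicate — and then running the HLS/H\"older interpolation finely enough that it forces $a$ into exactly the window \eqref{rr}; this is precisely where the intricate exponent conditions of $(h_4)$ are used. The secondary, more routine difficulty is the treatment of the singular nonlocal nonlinearity in the limiting steps (passing to the limit in the weak formulation and upgrading weak to strong convergence in $X$).
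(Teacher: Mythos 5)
Your second step (the limiting procedure $\lambda_j\uparrow\lambda^*$, the uniform bounds, the upgrade from weak to strong convergence, and the use of the nonexistence result to place the limits in $\mathcal{N}^-$ and $\mathcal{N}^+$) is essentially the paper's Lemma \ref{solu}, and your treatment of the minimality of the limits via fibre projections is if anything more careful than the paper's. The problem is Step 1, on which everything else depends.

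Your argument for the nonexistence of solutions in $\mathcal{N}^0$ has a genuine gap: decay estimates for the solution $w$ cannot, by themselves, constrain the integrability of the independently prescribed weight $a$. The relations $w\in\mathcal{N}$ and $J''(w)(w,w)=0$ only give the scalar identity $(1-q)\lambda^*A(w)=(p-1)\mu B(w)$, and the finiteness of $A(w)$ is automatic from $(h_3)$ for \emph{every} $u\in X$ (Proposition \ref{hl}); so "inserting decay estimates into the finite integral $A(w)$" produces no information about $a$ whatsoever, and the claimed conclusion $a\in L^r(\mathbb{R}^N)$ cannot be reached along this route. The missing idea is the following: a function $u\in\mathcal{N}^0$ at $\lambda=\lambda^*$ satisfies $\Lambda_n(u)=\lambda^*=\inf\Lambda_n$, i.e.\ it is a global minimizer of the $0$-homogeneous Rayleigh quotient $\Lambda_n$, and differentiating $\Lambda_n$ at this minimizer (done carefully in Lemma \ref{2.10}, because of the singular term) yields a \emph{second} Euler--Lagrange identity $2\langle u,\varphi\rangle-2p\mu B'(u)\varphi-2q\lambda^*A'(u)\varphi=0$ for all $\varphi\in X$. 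Subtracting the weak-solution identity $\langle u,\varphi\rangle-\mu B'(u)\varphi-\lambda^*A'(u)\varphi=0$ gives $(2-2q)\lambda^*A'(u)\varphi=(2p-2)\mu B'(u)\varphi$ for all $\varphi$, hence the pointwise relation
\begin{equation*}
(2-2q)\lambda^*(I_{\alpha_1}*a|u|^q)\,a(x)|u|^{q-2}u=(2p-2)\mu(I_{\alpha_2}*|u|^p)|u|^{p-2}u \quad\text{a.e. in }\mathbb{R}^N,
\end{equation*}
which can be solved for $a(x)$ as in \eqref{a(x)}. Only at this point do the decay estimates enter: the lower bound $(I_{\alpha_1}*a|u|^q)(x)\geq C|x|^{-(N-\alpha_1)}$ and the upper bound $(I_{\alpha_2}*|u|^p)(x)\leq CI_{\alpha_2}(x)\|u\|_p^p$, together with the H\"older/HLS bookkeeping governed by $(h_4)$, show that the right-hand side of \eqref{a(x)} lies in $L^r(\mathbb{R}^N)$ for $r$ in the window \eqref{rr}, contradicting $a\notin L^r(\mathbb{R}^N)$. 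Without the two independent variational identities there is no formula for $a$ and no contradiction; your comparison of $w$ with the fundamental solution of $-\Delta+V_0$ is not a substitute for this mechanism.
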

	
	\begin{rmk}
		It is worthwhile to mention that the cases $r=1$ and $r=2$ can be considered under specific constraints on $p$ and $q$ for the assumption $(h_4)$. The main idea here is to clarify how the hypothesis $(h_4)$ can be verified looking for some restriction on $p$ and $q$. More specifically, for $r=1$, we assume the following inequalities:
		\begin{eqnarray}
			\frac{2(N+\alpha_2-\alpha_1)}{N-2}<p-q; \quad \frac{2(N-4)}{N-2}<2p-q<\frac{2(N+\alpha_2)}{N-2}.
		\end{eqnarray}
		It is also essential that $1<2^*/(p-q)$. However, this estimate is already satisfied based on the assumptions about $p$ and $q$. Therefore, for $r=1$, we assume only the following estimates:
		\begin{equation}
			\frac{2(N+\alpha_2-\alpha_1)}{N-2}<p-q \quad \text{and} \quad \frac{2(N-4)}{N-2}<2p-q<\frac{2(N+\alpha_2)}{N-2}.
		\end{equation}
		On the other hand, assuming that $r=2$, we consider the following assumptions:
		\begin{equation}
			\frac{N-2(\alpha_1-\alpha_2)}{N-2}<p-q <\frac{N}{N-2}, \quad 2p-q<\frac{N+2\alpha_2}{N-2}. 
		\end{equation}
		Hence, assuming that $r=2$, we only assume the following conditions:	\begin{equation}
			\frac{N-2(\alpha_1-\alpha_2)}{N-2}<p-q<\frac{N}{N-2} \quad \text{and} \quad \frac{2(N-4)}{N-2}<2p-q<\frac{N+2\alpha_2}{N-2}.
		\end{equation}
	\end{rmk}
	\begin{rmk}
		For the case $r=2$ we can consider the hypothesis $(h_4)$ with some restrictions on the parameters $p$ and $q$. This is due to the fact that hypothesis $(h_4)$ implies that $a \in L^s(\mathbb{R}^N)$ where $s=2N/(N+\alpha_1-qN)$. Notice also that $q\in(0,1)$. It is not hard to see that there exists $\epsilon \in (0,1)$ such that
		\begin{eqnarray}
			\frac{2N}{N+\alpha_1-qN}&<&2 \quad\text{if}\quad q\in(0,\epsilon)\label{100}  \\
			\frac{2N}{N+\alpha_1-qN}&>&2 \quad\text{if}\quad q\in(1-\epsilon,1).
		\end{eqnarray}
		This situation can be a problem due to the interpolation law. As was told before, hypothesis $(h_4)$ implies that $a\not\in L^r(\mathbb{R}^N)$. Therefore, for the case $r=2$, we impose the condition $q\in(0,\alpha_1/N)$. This feature implies that inequality \eqref{100} is satisfied.
	\end{rmk}
	\begin{rmk}
		Let us consider a specific example for the parameters $p$ and $q$ where $r=1$ for hypothesis $(h_4)$. Here we also assume that $a$ does not belong to $L^r(\mathbb{R}^N)$. Consider also $N=3$ and $\alpha_1\in(1,3)$. Furthermore, by using the fact that $\alpha_2\in(0,\min\{3,(2p-q)/2\})$, we take $\epsilon>0$ small enough such that $\alpha_1=2+\epsilon$ and $\alpha_2=\epsilon$. Recall also that $p\in[2,\min\{2^*_{\alpha_2},2\alpha_1\})$. Under these conditions, we choose $p=3$ and $q=1/2$. Notice also that
		$2(3+\alpha_2-\alpha_1)<p-q$ and
		$2p-q< 2(3+\alpha_2)$ are also verified. The last assertion implies that hypothesis $(h_4)$ is satisfied. 
	\end{rmk}
	
	
	\subsection{Outline} 
	In Section \ref{pre}, we discuss preliminary results looking for the behavior of the fibering maps associated together with the Rayleigh quotient method. Section \ref{s3} is dedicated to the proof of our main results.
	
	\subsection{Notation}
	\begin{itemize}
		\item $2_{\alpha}= (N+\alpha)/N$ and $2^*_{\alpha}= (N+\alpha)/(N-2)$;	
		\item $A(u)=\displaystyle\int_{\mathbb{R}^N}(I_{\alpha_1}*a|u|^q)a(x)|u|^qdx$,\ \  $B(u)=\displaystyle\int_{\mathbb{R}^N}(I_{\alpha_2}*|u|^p)|u|^pdx$;
		\item In this work the norms $L^s(\mathbb{R}^N)$ and $L^{\infty}(\mathbb{R}^N)$ are denoted respectively by $\|\cdot\|_s$ and $\|\cdot\|_{\infty}$;
		\item 
		Define $\chi_{\Omega}$ as the characteristic function of the set $\Omega$;
		\item $A^\prime(u)\varphi=\displaystyle\int_{\mathbb{R}^N}(I_{\alpha_1}*a|u|^q)a(x)|u|^{q-2}u\varphi dx, B^\prime(u)\varphi=\displaystyle\int_{\mathbb{R}^N}(I_{\alpha_2}*|u|^p)|u|^{p-2}u\varphi dx, \varphi \in X;$
		\item 
		Define $(P_{\lambda^*})$ as the problem \eqref{p1} with $\lambda=\lambda^*$;
		\item $C,\overline{C_1},\tilde{C},C_1,C_2...$  denote positive constants (possibly different).
		\item $p^\prime= p/(p-1)$
	\end{itemize}

	\section{Preliminary results and variational setting}\label{pre}

	In the present section we shall consider the Nehari method and the nonlinear Rayleigh quotient to our main problem. To begin with, we shall present a well-known inequality.  Throughout this work, taking into account the convolutions terms, we consider the following result:
	\begin{prop}\label{hls}(Hardy-Littlewood-Sobolev inequality \cite{lieb})
		Let $t,r>1$ and $0<\alpha<N$ be such that $\frac{1}{t}+\frac{N-\alpha}{N}+\frac{1}{s}=2$. Let $\phi\in L^t(\mathbb{R}^N)$ and $\psi\in L^s(\mathbb{R}^N)$ be fixed functions. Then there exists a sharp constant $C=C(N,\alpha,t)>0$ such that 
		$$\int_{\mathbb{R}^N}\int_{\mathbb{R}^N}\frac{|\phi(x)\psi(y)|}{|x-y|^{N-\alpha}}dxdy\leq C\|\phi\|_t\|\psi\|_r.$$
	\end{prop}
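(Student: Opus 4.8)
The plan is to derive Proposition~\ref{hls} from the classical mapping properties of the Riesz potential together with Marcinkiewicz interpolation, and then to take the sharp constant to be the infimum of all admissible constants. Write $\lambda:=N-\alpha\in(0,N)$ and, for $f\ge 0$, set $I_\lambda f(x)=\int_{\mathbb{R}^N}|x-y|^{-\lambda}f(y)\,dy$. Since the integrand is nonnegative, Fubini gives $\int_{\mathbb{R}^N}\int_{\mathbb{R}^N}\frac{|\phi(x)\psi(y)|}{|x-y|^{\lambda}}\,dx\,dy=\int_{\mathbb{R}^N}|\psi(y)|\,(I_\lambda|\phi|)(y)\,dy$, and Hölder's inequality with exponents $s$ and $s'=s/(s-1)$ bounds this by $\|\psi\|_s\,\|I_\lambda|\phi|\|_{s'}$. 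The balance condition $\tfrac1t+\tfrac{\lambda}{N}+\tfrac1s=2$ is exactly $\tfrac1{s'}=\tfrac1t-\tfrac{\alpha}{N}$, with $t\in(1,N/\alpha)$ and $s'\in(1,\infty)$; so it suffices to prove the Hardy--Littlewood--Sobolev bound $\|I_\lambda f\|_{s'}\le C(N,\alpha,t)\,\|f\|_t$.

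For that I would first obtain a weak-type estimate by splitting the kernel. Fix $\beta>0$, assume $\|f\|_t=1$, and for $R>0$ write $I_\lambda f=K^{\mathrm{near}}_R*f+K^{\mathrm{far}}_R*f$ with $K^{\mathrm{near}}_R(z)=|z|^{-\lambda}\chi_{\{|z|\le R\}}$ and $K^{\mathrm{far}}_R(z)=|z|^{-\lambda}\chi_{\{|z|>R\}}$. Because $t<N/\alpha$ we have $\lambda t'>N$, hence $K^{\mathrm{far}}_R\in L^{t'}$ with $\|K^{\mathrm{far}}_R\|_{t'}=C_1R^{-N/s'}$; choosing $R$ so that this equals $\beta/2$ forces $K^{\mathrm{far}}_R*f\le\beta/2$ pointwise by Hölder, whence $\{I_\lambda f>\beta\}\subseteq\{K^{\mathrm{near}}_R*f>\beta/2\}$. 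Since $\lambda<N$, $K^{\mathrm{near}}_R\in L^1$ with $\|K^{\mathrm{near}}_R\|_1=C_2R^{\alpha}$, so Young's inequality $L^1*L^t\hookrightarrow L^t$ and Chebyshev give $|\{K^{\mathrm{near}}_R*f>\beta/2\}|\le (2C_2/\beta)^tR^{\alpha t}$. Inserting the chosen $R$ and using $\tfrac{\alpha t}{N}=1-\tfrac{t}{s'}$ collapses the exponent of $\beta$ to $-s'$, i.e.\ $|\{I_\lambda f>\beta\}|\le C\beta^{-s'}$; after scaling in $f$ this is the weak-type $(t,s')$ bound $\|I_\lambda f\|_{L^{s',\infty}}\le C\|f\|_t$.

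To upgrade to the strong-type estimate, I would pick $t_0,t_1$ with $1<t_0<t<t_1<N/\alpha$ and define $s_i'$ by $\tfrac1{s_i'}=\tfrac1{t_i}-\tfrac{\alpha}{N}$; the previous paragraph gives weak-type $(t_0,s_0')$ and $(t_1,s_1')$ bounds for the sublinear operator $f\mapsto I_\lambda|f|$, and since $\tfrac1t\mapsto\tfrac1{s'}$ is affine, the pair $(t,s')$ sits on the interpolation segment with matching relation. The Marcinkiewicz interpolation theorem then yields $\|I_\lambda f\|_{s'}\le C(N,\alpha,t)\|f\|_t$, and combining with the duality step of the first paragraph proves the inequality; taking $C$ to be the infimum of constants for which it holds produces a sharp constant. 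That this sharp constant is in fact attained — and its explicit value in the diagonal case $t=s$ — is the deeper content of Lieb's theorem, obtained via symmetric decreasing rearrangement and a competing-symmetries argument; we simply invoke \cite{lieb} for that.

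I expect the real obstacle to be the passage from weak to strong type: a direct use of Young's inequality reaches only the weak-type endpoint (and at $t=1$ the strong-type inequality genuinely fails), so one must interpolate between two weak-type estimates straddling $t$ and check that the exponents align along the Sobolev line $\tfrac1{s'}=\tfrac1t-\tfrac{\alpha}{N}$. Identifying the \emph{optimal} constant is a separate and substantially harder problem, which we do not reprove but quote from \cite{lieb}.
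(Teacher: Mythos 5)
Your proposal is correct, but it should be said up front that the paper offers no proof of this proposition at all: it is quoted verbatim as a classical result from \cite{lieb} (Lieb--Loss, \emph{Analysis}), and the statement even carries a small notational slip ($\psi\in L^s$ in the hypothesis versus $\|\psi\|_r$ in the conclusion, with both $r$ and $s$ floating around the balance condition), which you silently and correctly resolve by working with $\|\psi\|_s$. What you supply is the standard textbook derivation of the non-sharp inequality: duality reduces the bilinear form to the mapping bound $\|I_\lambda f\|_{s'}\le C\|f\|_t$ along the Sobolev line $\tfrac1{s'}=\tfrac1t-\tfrac{\alpha}{N}$; the near/far kernel splitting with the optimized radius $R\sim\beta^{-s'/N}$ gives the weak-type $(t,s')$ bound (your exponent bookkeeping $t+\alpha t s'/N=s'$ checks out, as does the integrability condition $\lambda t'>N\iff t<N/\alpha$); and Marcinkiewicz interpolation between two such weak-type estimates straddling $t$ upgrades to strong type, which is legitimate because the relation $\tfrac1t\mapsto\tfrac1t-\tfrac{\alpha}{N}$ is affine and $s_i'>t_i$ at both endpoints. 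The one caveat is the word ``sharp'': defining $C$ as the infimum of admissible constants gives the best constant tautologically, but the substantive content of Lieb's theorem --- existence of optimizers and the explicit value in the conformally invariant case $t=r$ --- is not reproved by your argument, and you are right to quote \cite{lieb} for it; since the paper only ever uses the inequality with \emph{some} finite constant, nothing downstream depends on sharpness. In short, your route is a genuine, self-contained proof where the paper has only a citation, at the cost of not recovering the optimal constant.
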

	\begin{rmk}
		It follows from the last result that $I_{\alpha}*\phi\in L^{\frac{Ns}{N-\alpha s}}(\mathbb{R}^N)$ holds for all $\phi\in L^s(\mathbb{R}^N)$ and $s\in(1,\frac{N}{\alpha})$.
	\end{rmk}
	
	As observed in \cite{seok0} the Hardy-Littlewood-Sobolev inequality can also be formulated as follows:
	
	\begin{lem} \label{impotante} Consider $1\leq r<s<+\infty$ and $0<\alpha<N$ such that $$\frac{1}{r}-\frac{1}{s}=\frac{\alpha}{N}.$$
		Then for any $f\in L^r(\mathbb{R}^N)$, we obtain
		$$\left\|\frac{1}{|\cdot|^{N-\alpha}}*f\right\|_{s}\leq C(N,\alpha,r)\|f\|_{r}.$$
	\end{lem}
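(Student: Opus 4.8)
The plan is to deduce this reformulation directly from the Hardy--Littlewood--Sobolev inequality in Proposition \ref{hls} by a duality argument. Write $Tf(x)=\int_{\mathbb{R}^N}\frac{f(y)}{|x-y|^{N-\alpha}}\,dy$ and let $s'=s/(s-1)$ be the conjugate exponent of $s$; since $1\le r<s<+\infty$ we have $1<s'<+\infty$. The first step is to record the algebraic identity that will make everything fit: the hypothesis $\frac{1}{r}-\frac{1}{s}=\frac{\alpha}{N}$ is equivalent to
\[
\frac{1}{s'}+\frac{N-\alpha}{N}+\frac{1}{r}=2,
\]
which is precisely the balance condition required in Proposition \ref{hls} with the pair of exponents $(s',r)$. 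I would also first check that $T|f|$ is finite almost everywhere (splitting the kernel into its part on the unit ball and its tail, and using $f\in L^r$), so that Tonelli's theorem may be invoked freely below.

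Next, for an arbitrary $g\in L^{s'}(\mathbb{R}^N)$ with $\|g\|_{s'}\le 1$, Tonelli's theorem gives
\[
\left|\int_{\mathbb{R}^N}(Tf)(x)\,g(x)\,dx\right|\le \int_{\mathbb{R}^N}\int_{\mathbb{R}^N}\frac{|f(y)|\,|g(x)|}{|x-y|^{N-\alpha}}\,dx\,dy .
\]
Applying Proposition \ref{hls} with $\phi=g\in L^{s'}(\mathbb{R}^N)$ and $\psi=f\in L^{r}(\mathbb{R}^N)$ — legitimate exactly because of the balance identity above — the right-hand side is bounded by $C(N,\alpha,s')\,\|g\|_{s'}\,\|f\|_{r}\le C(N,\alpha,r)\,\|f\|_{r}$, where in the last inequality we used that $s'$ is uniquely determined by $r$, $\alpha$ and $N$. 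Taking the supremum over all such $g$ and using the duality characterisation of the $L^s$ norm (valid since $1<s<\infty$) yields
\[
\|Tf\|_{s}=\sup_{\|g\|_{s'}\le 1}\int_{\mathbb{R}^N}(Tf)(x)\,g(x)\,dx\le C(N,\alpha,r)\,\|f\|_{r},
\]
which is the assertion.

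There is no deep obstacle here — the lemma is essentially a repackaging of Proposition \ref{hls} — so the main points that need care are bookkeeping rather than analysis: justifying the use of Tonelli/Fubini via the a.e.\ finiteness of $T|f|$, and handling the endpoint $r=1$ (hence $s=N/(N-\alpha)$). In that borderline case Proposition \ref{hls} is not available with $t=s'=\infty$, and in fact the strong $L^1\to L^{N/(N-\alpha)}$ bound fails (only a weak-type estimate holds); accordingly the argument above gives the stated inequality for every $r>1$, which is the range effectively used in the remainder of the paper. The dependence of the constant on $(N,\alpha,r)$ only is then immediate from the construction.
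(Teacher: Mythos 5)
Your argument is correct for $r>1$ and is the standard way to pass from the bilinear form of Hardy--Littlewood--Sobolev to the convolution form; the paper, by contrast, gives no proof at all of Lemma~\ref{impotante} and simply cites \cite{seok0}, so there is nothing internal to compare against. Your bookkeeping checks out: $\tfrac{1}{s'}=1-\tfrac{1}{s}$ turns the balance condition $\tfrac{1}{s'}+\tfrac{N-\alpha}{N}+\tfrac{1}{r}=2$ of Proposition~\ref{hls} exactly into $\tfrac{1}{r}-\tfrac{1}{s}=\tfrac{\alpha}{N}$, and since $s'$ is then a function of $(N,\alpha,r)$ the constant depends only on those parameters. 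Two small points of care: in the final display the duality characterisation should be applied to the nonnegative function $T|f|$ (or with $\sup$ of $\bigl|\int Tf\,g\bigr|$), since one cannot assume a priori that $Tf\in L^s$; and your observation about the endpoint is substantive rather than cosmetic --- the strong $L^1\to L^{N/(N-\alpha)}$ bound genuinely fails (only the weak-type estimate holds), so the lemma as stated with ``$1\leq r$'' is too generous and should read $1<r$. This does not affect the paper, since every invocation of the lemma (e.g.\ in the proof of Corollary~\ref{cor}, where the relevant exponent is explicitly required to lie in $(1,N/\alpha_2)$) uses $r>1$, but it is worth flagging.
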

	As a result, we can demonstrate that our functional $J$ is well-defined. More specifically, we obtain the following result:
	\begin{prop}\label{hl}
		Suppose $(h_1)-(h_3)$ holds. Then the functions $A, B: X \to \mathbb{R}$ are well-defined.
	\end{prop}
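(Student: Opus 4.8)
The plan is to show that both integrals defining $A(u)$ and $B(u)$ are finite for every $u \in X$, by applying the Hardy--Littlewood--Sobolev inequality (Proposition \ref{hls}) together with the Sobolev embeddings $X \hookrightarrow L^r(\mathbb{R}^N)$ valid for $r \in [2, 2^*]$. For $B(u)$, I would first observe that the exponent condition $p \in (2_{\alpha_2}, 2^*_{\alpha_2})$ is precisely what makes the Hardy--Littlewood--Sobolev estimate applicable: setting $\phi = \psi = |u|^p$ one needs $|u|^p \in L^t(\mathbb{R}^N)$ with $2/t + (N-\alpha_2)/N = 2$, i.e. $t = 2N/(N+\alpha_2)$, so that $u \in L^{pt}(\mathbb{R}^N)$ with $pt \in [2, 2^*]$. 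Checking $pt \ge 2 \iff p \ge (N+\alpha_2)/N = 2_{\alpha_2}$ and $pt \le 2^* \iff p \le (N+\alpha_2)/(N-2) = 2^*_{\alpha_2}$; both hold by hypothesis. Hence Proposition \ref{hls} gives $|B(u)| \le C \||u|^p\|_t^2 = C \|u\|_{pt}^{2p} < \infty$, using the continuous embedding $X \hookrightarrow L^{pt}(\mathbb{R}^N)$.

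For $A(u) = \int_{\mathbb{R}^N} (I_{\alpha_1} * a|u|^q) a(x)|u|^q\,dx$ the situation is more delicate because of the weight $a$, and this is where hypothesis $(h_3)$ enters. I would apply Proposition \ref{hls} with $\phi = \psi = a|u|^q$ and $\alpha = \alpha_1$, so one needs $a|u|^q \in L^t(\mathbb{R}^N)$ with $2/t + (N-\alpha_1)/N = 2$, i.e. $t = 2N/(N+\alpha_1)$. By Hölder's inequality with exponents chosen so that $a$ is measured in $L^s$ with $s = 2N/(N+\alpha_1 - qN)$ (exactly the space in $(h_3)$) and $|u|^q$ in the conjugate slot, one gets $\|a|u|^q\|_t \le \|a\|_s \| |u|^q \|_{m}$ where $m$ is determined by $1/t = 1/s + 1/m$. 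A short computation gives $qm = 2^*$, hence $\| |u|^q\|_m = \|u\|_{2^*}^q < \infty$ by the Sobolev embedding $X \hookrightarrow L^{2^*}(\mathbb{R}^N)$. Putting this together, $|A(u)| \le C \|a\|_s^2 \|u\|_{2^*}^{2q} < \infty$.

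The main obstacle — and the step deserving the most care — is the bookkeeping of the Hölder and Hardy--Littlewood--Sobolev exponents for $A(u)$: one must verify that the value $s = 2N/(N+\alpha_1 - qN)$ in $(h_3)$ is indeed $> 1$ (so that $a \in L^s$ is a meaningful requirement and HLS applies) and that the companion exponent $m$ satisfies $qm = 2^*$ with $m \ge 1$, which hinges on $q \in (0,1)$, $\alpha_1 \in (0,N)$ and $N \ge 3$. I would check $s > 1 \iff N + \alpha_1 - qN < 2N \iff \alpha_1 < N(1+q)$, which is immediate since $\alpha_1 < N$. The remaining inequalities $t \in (1, N/\alpha_j)$ needed to make the Riesz potential mapping properties (and the Remark after Proposition \ref{hls}) available are of the same routine nature. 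Once all exponents are confirmed admissible, the estimates above show $A, B : X \to \mathbb{R}$ are finite-valued, completing the proof; continuity of $A$ and $B$, if needed later, follows from the same estimates together with the compact embeddings $X \hookrightarrow L^r(\mathbb{R}^N)$ for $r \in [2, 2^*)$, but the present statement only asserts well-definedness.
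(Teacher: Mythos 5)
Your argument follows the paper's proof essentially verbatim: HLS with $t=2N/(N+\alpha_2)$ plus the embedding $X\hookrightarrow L^{pt}(\mathbb{R}^N)$ for $B$, and HLS with $t=2N/(N+\alpha_1)$ followed by H\"older splitting $a$ into $L^s$ with $s=2N/(N+\alpha_1-qN)$ for $A$. One arithmetic slip: with that $s$ and $t$, the companion exponent satisfies $1/m=1/t-1/s=q/2$, so $qm=2$ (not $2^*$), and the correct bound is $|A(u)|\leq C\|a\|_s^2\|u\|_2^{2q}$ exactly as in the paper; since $X\hookrightarrow L^2(\mathbb{R}^N)$ this does not affect the conclusion, but the identity $qm=2^*$ as stated is false.
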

	\begin{proof}
		Using the Hardy-Littlewood-Sobolev inequality, see Proposition \ref{hls}, we have that
		\begin{equation}
			\int_{\mathbb{R}^N}(I_{\alpha_1}*a|u|^q)a(x)|u|^q dx\leq C\|a|u|^q\|^2_t
		\end{equation}
		where $t=2N/(N+\alpha_1)$. Now, by using the H\"{o}lder's inequality, we obtain that
		\begin{equation}
			\|a|u|^q\|^2_t=\left(\int_{\mathbb{R}^N}[a(x)]^t|u|^{qt}\right)^{\frac{2}{t}}dx\leq\left[\left(\int_{\mathbb{R}^N}[a(x)]^{tr}dx\right)^{\frac{1}{r}}\left(\int_{\mathbb{R}^N}|u|^2dx\right)^{\frac{qt}{2}}\right]^\frac{2}{t}
		\end{equation}
		where $1/r+qt/2=1$. Hence, 
		\begin{equation}
			r=\frac{N+\alpha_1}{N+\alpha_1-qN}, \quad s=rt=\frac{2N}{N+\alpha_1-qN}.
		\end{equation}
		Notice that $a\in L^s(\mathbb{R}^N)$. Furthermore, by the continuous embedding $X\hookrightarrow L^r(\mathbb{R}^N)$ for $r\in[2,2^*]$, we conclude that 
		\begin{equation}\label{27}
			\int_{\mathbb{R}^N}(I_{\alpha_1}*a|u|^q)a(x)|u|^q dx	\leq C\|a|u|^q\|^2_t\leq[C_1\|a\|^t_s\|u\|^{qt}_2]^{\frac{2}{t}}\leq C_2\|a\|^2_s\|u\|^{2q}_2\leq C_3\|a\|^2_s\|u\|^{2q}<+\infty.
		\end{equation}
		Similarly, we have that
		\begin{equation}
			\int_{\mathbb{R}^N}(I_{\alpha_2}*|u|^p)|u|^p dx\leq C_4\||u|^p\|^2_t
		\end{equation}
		for $t=2N/(N+\alpha_2)$. Notice also that since $pt\in[2,2^*]$. Hence, we can use again the embedding $X\hookrightarrow L^r(\mathbb{R}^N)$ showing that
		\begin{equation}
			\||u|^p\|^2_t=\left(\int_{\mathbb{R}^N}|u|^{pt} dx\right)^\frac{2}{t}=\|u\|^{2p}_{pt}\leq C_5\|u\|^{2p}.
		\end{equation} 
		Under these conditions, we infer that
		\begin{equation}
			\int_{\mathbb{R}^N}(I_{\alpha_2}*|u|^p)|u|^p dx\leq C_5\|u\|^{2p}<+\infty.
		\end{equation}Hence, the energy functional $J$ is well-defined.
		This concludes the proof.
	\end{proof}
	
	\begin{rmk}\label{2o1} Based on the functionals $R_n$ and $R_e$ we observe that	
		$R_n(tu)=\lambda$ if and only if $ J^\prime(tu)tu=0$. Moreover, 
		$R_n(tu)>\lambda$ if and only if $ J^\prime(tu)tu>0 $. In the same way, we mention that
		$R_n(tu)<\lambda$ if and only if $ J^\prime(tu)tu<0$. Similarly, we obtain the following assertion $R_e(tu)=\lambda$ if and only if $ J(tu)=0$. Moreover, $R_e(tu)>\lambda$ if and only if $J(tu)>0$. Finally, we also mention that
		$R_e(tu)<\lambda$ if and only if $ J(tu)<0$. 
		
	\end{rmk}

	Now, we consider a relation between the functionals $R_n$ and $R_e$. More precisely,  we compare the energy functional $J$ with their derivatives. Hence, we can prove the following result:

	\begin{prop}
		Suppose that $(h_1)-(h_3)$ holds. Let $u\in X\setminus\{0\}$ be such that $R_e(tu)=\lambda$ for some $t>0$. Then, we obtain that $R^{\prime}_e(tu)u>0$ if and only if $J^{\prime}(tu)tu>0$. Moreover, we obtain that $R^{\prime}_e(tu)u<0$ if and only if $J^{\prime}(tu)tu<0$. Furthermore, we infer that $R^{\prime}_e(tu)u=0$ if and only if $J^{\prime}(tu)tu=0$.
		
	\end{prop}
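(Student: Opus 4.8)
The plan is to exploit the homogeneity of the three building blocks $\|\cdot\|^2$, $A$ and $B$ along the ray $t\mapsto tu$, which turns the statement into a one–variable computation. First I would fix $u\in X\setminus\{0\}$ and record that $\|tu\|^2=t^2\|u\|^2$, $A(tu)=t^{2q}A(u)$ and $B(tu)=t^{2p}B(u)$ for every $t>0$, together with $A(u)>0$ (since $I_{\alpha_1}>0$ and $a|u|^q\not\equiv 0$). It follows that the map $\Phi(t):=R_e(tu)$ is of class $C^1$ on $(0,+\infty)$ --- even though $R_e$ itself is not differentiable on all of $X$ because of the singular factor $|u|^q$ --- and that the directional derivative along the ray satisfies $R_e^\prime(tu)u=\Phi^\prime(t)$, because $R_e(tu+su)=R_e((t+s)u)$. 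In parallel I would record $J^\prime(tu)tu=t^2\|u\|^2-\lambda t^{2q}A(u)-\mu t^{2p}B(u)$.

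Writing $\Phi=P/Q$ with $P(t)=\tfrac{t^2}{2}\|u\|^2-\tfrac{\mu t^{2p}}{2p}B(u)$ and $Q(t)=\tfrac{t^{2q}}{2q}A(u)>0$, the quotient rule gives $\Phi^\prime(t)=\big(P^\prime(t)Q(t)-P(t)Q^\prime(t)\big)/Q(t)^2$. Now I would use the hypothesis $R_e(tu)=\lambda$, i.e. $P(t)=\lambda Q(t)$, to cancel one factor of $Q(t)$ and obtain
$$\Phi^\prime(t)=\frac{P^\prime(t)-\lambda Q^\prime(t)}{Q(t)}.$$
A direct computation of $P^\prime$ and $Q^\prime$ yields $P^\prime(t)-\lambda Q^\prime(t)=t\|u\|^2-\mu t^{2p-1}B(u)-\lambda t^{2q-1}A(u)=\tfrac1t\,J^\prime(tu)tu$, hence
$$R_e^\prime(tu)u=\Phi^\prime(t)=\frac{J^\prime(tu)tu}{t\,Q(t)}.$$

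Since $t>0$ and $Q(t)=\tfrac{t^{2q}}{2q}A(u)>0$, the coefficient $t\,Q(t)$ is strictly positive; therefore $R_e^\prime(tu)u$ and $J^\prime(tu)tu$ have the same sign, and one of them vanishes precisely when the other does, which delivers the three equivalences simultaneously. There is no genuinely hard step here: the only points that need care are the justification that $R_e$, though not differentiable on $X$, is smooth in $t$ along the ray $\{tu:t>0\}$ (so that $R_e^\prime(tu)u$ legitimately equals $\Phi^\prime(t)$) and the strict positivity of $A(u)$, which keeps all the denominators harmless.
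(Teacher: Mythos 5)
Your proposal is correct and follows essentially the same route as the paper: both reduce everything to the single identity $R_e^\prime(tu)u=\frac{\frac{1}{t}J^\prime(tu)tu}{\frac{t^{2q}}{2q}A(u)}$, from which the three sign equivalences are immediate since the denominator is positive. The paper simply asserts this identity, whereas you derive it via the quotient rule and the hypothesis $R_e(tu)=\lambda$, which is a welcome (and correct) elaboration rather than a different method.
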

	\begin{proof} 
		It is not hard to see that 
		$$R^\prime_e(tu)u=\frac{\frac{1}{t}J^\prime(tu)tu}{\frac{t^{2q}}{2q}A(u)}$$
		holds for each $u\in X\setminus\{0\}$ where $R_e(tu)=\lambda$ with $t>0$. Therefore, the desired result follows using the last identity.
	\end{proof}
	
	\begin{prop}\label{2.4}
		Suppose that $(h_1)-(h_3)$ holds. Let $u\in X\setminus\{0\}$ be such that $R_n(tu)=\lambda$ for some $t>0$. Then, we obtain that $R^{\prime}_n(tu)u>0$ if and only if $J^{\prime\prime}(tu)(tu,tu)>0$. Moreover, we infer that $R^{\prime}_n(tu)u<0$ if and only if $J^{\prime\prime}(tu)(tu,tu)<0$. Furthermore, we obtain that $R^{\prime}_n(tu)u=0$ if and only if $J^{\prime\prime}(tu)(tu,tu)=0$.
		
	\end{prop}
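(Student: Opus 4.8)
The plan is to mirror the short computation used for $R_e$ in the previous proposition, exploiting the homogeneity of the three building blocks of $R_n$. First I would record that for $t>0$ one has $\|tu\|^2=t^2\|u\|^2$, $A(tu)=t^{2q}A(u)$ and $B(tu)=t^{2p}B(u)$, so that
$$R_n(tu)=\frac{t^{2-2q}\|u\|^2-\mu t^{2p-2q}B(u)}{A(u)}=:g(t).$$
Since $R_n^{\prime}(tu)u=\frac{d}{dt}R_n(tu)=g^{\prime}(t)$ by the chain rule, a direct differentiation gives
$$R_n^{\prime}(tu)u=\frac{(2-2q)t^{1-2q}\|u\|^2-(2p-2q)\mu t^{2p-2q-1}B(u)}{A(u)}.$$

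Next I would rewrite $J^{\prime\prime}(tu)(tu,tu)=2t^2\|u\|^2-2q\lambda t^{2q}A(u)-2p\mu t^{2p}B(u)$ and use the hypothesis $R_n(tu)=\lambda$, which is precisely the identity $\lambda t^{2q}A(u)=t^2\|u\|^2-\mu t^{2p}B(u)$, to eliminate the term $2q\lambda t^{2q}A(u)$. This collapses the expression to
$$J^{\prime\prime}(tu)(tu,tu)=(2-2q)t^2\|u\|^2-(2p-2q)\mu t^{2p}B(u).$$
Comparing the two displayed formulas one sees at once that
$$R_n^{\prime}(tu)u=\frac{J^{\prime\prime}(tu)(tu,tu)}{t^{2q+1}A(u)}.$$

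Finally I would observe that the factor $1/(t^{2q+1}A(u))$ is strictly positive: $t>0$ by assumption, the quantity $A(u)$ is finite by Proposition \ref{hl}, and $A(u)=\int_{\mathbb{R}^N}(I_{\alpha_1}*a|u|^q)a(x)|u|^q\,dx>0$ because $a$ is a positive function by $(h_3)$, $u\neq 0$, and $I_{\alpha_1}$ is a strictly positive kernel, so $I_{\alpha_1}*a|u|^q>0$ a.e. Hence $R_n^{\prime}(tu)u$ and $J^{\prime\prime}(tu)(tu,tu)$ share the same sign, which yields the three claimed equivalences simultaneously.

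I do not expect a genuine obstacle here; the argument is a short algebraic manipulation parallel to the preceding proposition. The only points needing a little care are keeping the homogeneity exponents ($2-2q$ and $2p-2q$) straight when differentiating $g$, and justifying that $A(u)$ is a finite, strictly positive number so that both the division by $t^{2q+1}A(u)$ and the ensuing sign comparison are legitimate.
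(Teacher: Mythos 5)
Your proposal is correct and follows essentially the same route as the paper: the paper's proof simply asserts the identity $R^\prime_n(tu)u=\frac{1}{t}J^{\prime\prime}(tu)(tu,tu)/(t^{2q}A(u))$, which is exactly your formula $R_n^{\prime}(tu)u=J^{\prime\prime}(tu)(tu,tu)/(t^{2q+1}A(u))$, and concludes from the positivity of the denominator. Your write-up just supplies the homogeneity computation and the substitution of $R_n(tu)=\lambda$ that the paper leaves implicit.
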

	\begin{proof} 
		Similarly, we can prove that
		$$R^\prime_n(tu)u=\frac{\frac{1}{t}J^{\prime\prime}(tu)(tu,tu)}{t^{2q}A(u)}$$
		holds for each $u\in X\setminus\{0\}$ such that $R_n(tu)=\lambda$ with $t>0$. Hence, by using the last identity, the proof for the desired result follows.
	\end{proof}
	
	Now, we shall investigate the fibering map $Q_n$, which is defined for each $t>0$ as follows
	$$Q_n(t)=R_n(tu)=\frac{t^{2-2q}\|u\|^2-t^{2p-2q}\mu B(u)}{A(u)}, t > 0.$$ 
	As a result, we obtain the following identity
	$$Q^\prime_n(t)=\frac{(2-2q)t^{1-2q}\|u\|^2-(2p-2q)t^{2p-2q-1}\mu B(u)}{A(u)}.$$
	It is easy to check that the unique critical point of $Q_n$ is given by
	$$t_n(u)=\left[\frac{(1-q)\|u\|^2}{(p-q)\mu B(u)}\right]^{\frac{1}{2p-2}}.$$
	Now, we observe that $\displaystyle\lim_{t\rightarrow 0}Q_n(t)=0$ and  $\displaystyle\lim_{t\rightarrow+\infty}Q_n(t)=-\infty$.
	It is important to stress that
	\begin{eqnarray}
		\lim_{t\rightarrow 0}\frac{Q_n(t)}{t^{2-2q}}&=&\frac{\|u\|^2}{A(u)}>0, \,\,
		\lim_{t\rightarrow 0}\frac{Q^\prime_n(t)}{t^{1-2q}}=\frac{2(1-q)\|u\|^2}{A(u)}>0, \nonumber \\
		\lim_{t\rightarrow+\infty}\frac{Q_n(t)}{t^{2p-2q}}&=&-\frac{\mu B(u)}{A(u)}<0, \,\,
		\lim_{t\rightarrow+\infty}\frac{Q^{\prime}_n(t)}{t^{2p-2q-1}}=\frac{-2(p-q)\mu B(u)}{A(u)}<0. \nonumber 
	\end{eqnarray}
	
	Consequently, the critical point $t_n(u)$ is unique which give us a global maximum point for the function $Q_n$. Under these conditions, we obtain the following identity:
	\begin{eqnarray}
		Q_n(t_n(u))=C_{p,q,\mu}\frac{\|u\|^{2\left(\frac{p-q}{p-1}\right)}B(u)^{\frac{q-1}{p-1}}}{A(u)} \nonumber
	\end{eqnarray}
	where 
	$$C_{p,q,\mu}=\left(\frac{1-q}{p-q}\right)^{\frac{1-q}{p-1}}\left(\frac{p-1}{p-q}\right)\mu^{\frac{q-1}{p-1}}.$$
	It follows that also that $Q^\prime_n(t)>0$ for each $t\in(0,t_n(u))$. Similarly, we observe that $Q^\prime_n(t)<0$ for each $t\in(t_n(u),+\infty)$.
	As a consequence, we can consider the functional $\Lambda_n:X\setminus\{0\}\rightarrow\mathbb{R}$ defined by
	$$\Lambda_n(u)=Q_n(t_n(u))=C_{p,q,\mu}\frac{\|u\|^{2\left(\frac{p-q}{p-1}\right)}}{B(u)^{\frac{1-q}{p-1}}A(u)}.$$
	Analogously, we define the function $Q_e(t)$ given by
	$$Q_e(t)=R_e(tu)=\frac{\frac{t^{2-2q}}{2}\|u\|^2-\frac{ t^{2p-2q}}{2p}\mu  B(u)}{\frac{1}{2q}A(u)}.$$ 
	Thus, we deduce that
	$$Q^\prime_e(t)=\frac{(1-q)t^{1-2q}\|u\|^2-\frac{(p-q)}{p}\mu t^{2p-2q-1}B(u)}{\frac{1}{2q}A(u)}.$$
	Notice also that $\displaystyle\lim_{t\rightarrow 0}Q_e(t)=0$ and $\displaystyle\lim_{t\rightarrow+\infty}Q_e(t)=-\infty$. Furthermore, we infer that 
	\begin{eqnarray}
		\lim_{t\rightarrow 0}\frac{Q_e(t)}{t^{2-2q}}&=&\frac{q\|u\|^2}{A(u)}>0,
		\lim_{t\rightarrow 0}\frac{Q^\prime_e(t)}{t^{1-2q}}=\frac{(1-q)2q\|u\|^2}{A(u)}>0, \,\, \nonumber \\
		\lim_{t\rightarrow+\infty}\frac{Q_e(t)}{t^{2p-2q}}&=&-\frac{\mu B(u)}{\frac{1}{2q}A(u)}<0, \,\,
		\lim_{t\rightarrow+\infty}\frac{Q^\prime_e(t)}{t^{2p-2q-1}}=-\frac{\mu\left(\frac{p-q}{p}\right)B(u)}{\frac{1}{2q}A(u)}<0. \nonumber
	\end{eqnarray}
	Therefore, the critical point $t_e(u)$ is unique proving that $t_e(u)$ is global maximum point for the function $Q_e$. It is easy to check that
	$$\left[\frac{(1-q)p\|u\|^2}{(p-q)\mu B(u)}\right]^{\frac{1}{2p-2}}=t_e(u)=p^{\frac{1}{2p-2}}t_n(u).$$
	Hence, we obtain that 
	\begin{eqnarray}
		Q_e(t_e(u))=\left[\frac{(1-q)p}{p-q}\right]^{\frac{1-q}{p-1}}q\left(\frac{p-1}{p-q}\right)\frac{1}{\mu^{\frac{1-q}{p-1}}}\frac{\|u\|^{2\left(\frac{p-q}{p-1}\right)}}{B(u)^{\frac{1-q}{p-1}}A(u)}. \nonumber
	\end{eqnarray}
	Consequently, we consider the functional $\Lambda_e:X\setminus\{0\}\rightarrow\mathbb{R}$ defined by: 
	\begin{eqnarray}
		\Lambda_e(u)=Q_e(t_e(u))=\left[\frac{(1-q)p}{p-q}\right]^{\frac{1-q}{p-1}}q\left(\frac{p-1}{p-q}\right)\frac{1}{\mu^{\frac{1-q}{p-1}}}\frac{\|u\|^{2\left(\frac{p-q}{p-1}\right)}}{B(u)^{\frac{1-q}{p-1}}A(u)}. \nonumber
	\end{eqnarray}
	It not hard to verify that $\Lambda_e(u)=p^{\frac{1-q}{p-1}}q\Lambda_n(u)$, see Figure 2.
	
	
	\begin{lem}\label{lema2.1}
		Suppose that $(h_1)-(h_3)$ holds. Assume that
		$$\Lambda_n(u):=Q_n(t_n(u))=C_{p,q,\mu}\frac{\|u\|^{2\left(\frac{p-q}{p-1}\right)}}{B(u)^{\frac{1-q}{p-1}}A(u)}$$
		Then we have the following statements:
		\begin{itemize}
			\item [i)] The functional $\Lambda_n$ is 0-homogeneous, that is, $\Lambda_n(tu) = \Lambda_n(u)$ for each $t > 0$; $u\in X\setminus\{0\}$.
			\item[ii)] The function
			$\Lambda_n$ is unbounded from above, that is, there exists a sequence $(w_k)_{k\in\mathbb{N}}\in X\setminus\{0\}$ such that $\Lambda_n(w_k)\rightarrow+\infty$ as $k\rightarrow+\infty$.
			\item[iii)] There exists $u\in X\setminus\{0\}$ such that $\lambda^*=\Lambda_n(u)=\displaystyle\inf_{v\in X\setminus\{0\}}\Lambda_n(v)$. Furthermore, we have that $\lambda^*>0$.
		\end{itemize}
	\end{lem}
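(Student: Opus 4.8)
The plan is to handle the three items separately, with (i)--(ii) resting only on the scaling behaviour of $A$, $B$ and $\|\cdot\|$, and (iii) on a weak-compactness argument for a minimizing sequence. For item (i), note that from the definitions of $A$ and $B$ one has $\|tu\|^2=t^2\|u\|^2$, $A(tu)=t^{2q}A(u)$ and $B(tu)=t^{2p}B(u)$ for every $t>0$; substituting these into $\Lambda_n$, the exponent of $t$ in the numerator is $2(p-q)/(p-1)$, while in the denominator it is $\tfrac{2p(1-q)}{p-1}+2q$, and the elementary identity $\tfrac{2p(1-q)}{p-1}+2q=\tfrac{2(p-q)}{p-1}$ makes the powers of $t$ cancel, so $\Lambda_n(tu)=\Lambda_n(u)$.

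For item (ii), I would exhibit an explicit concentrating family: fix $\varphi\in C_c^\infty(\mathbb{R}^N)\setminus\{0\}$ with $\mathrm{supp}\,\varphi\subset B_R(0)$ and set $w_k(x)=k^{(N-2)/2}\varphi(kx)$. Then $\|\nabla w_k\|_2=\|\nabla\varphi\|_2$ is fixed, $\|w_k\|_2^2=k^{-2}\|\varphi\|_2^2\to 0$, and since $w_k$ is supported in $B_R(0)$ for $k\ge1$ and $V$ is locally bounded, $\int V w_k^2\le C\|w_k\|_2^2\to 0$; hence $\|w_k\|$ stays in a fixed compact subinterval of $(0,\infty)$. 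By the estimates used in Proposition \ref{hl} we get $A(w_k)\le C\|a\|_s^2\|w_k\|_2^{2q}\to 0$ and $B(w_k)\le C\|w_k\|^{2p}$ bounded, so the denominator $B(w_k)^{(1-q)/(p-1)}A(w_k)$ of $\Lambda_n(w_k)$ tends to $0$ while the numerator stays bounded away from $0$; therefore $\Lambda_n(w_k)\to+\infty$.

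For item (iii), the lower bound $\lambda^*>0$ follows just as in (i): inserting $A(u)\le C\|a\|_s^2\|u\|^{2q}$ and $B(u)\le C\|u\|^{2p}$ into $\Lambda_n$ and using the same cancellation of the powers of $\|u\|$ yields $\Lambda_n(u)\ge c_0>0$ for a constant $c_0$ depending only on $N,p,q,\mu$ and $\|a\|_s$; moreover this infimum is the $\lambda^*$ defined earlier, since $\sup_{t>0}R_n(tu)=Q_n(t_n(u))=\Lambda_n(u)$. For attainment I would take a minimizing sequence and, using the $0$-homogeneity from (i), normalize it so that $\|u_k\|=1$; then $(u_k)$ is bounded, so up to a subsequence $u_k\rightharpoonup u$ in $X$ and $u_k\to u$ in $L^r(\mathbb{R}^N)$ for every $r\in[2,2^*)$. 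The crucial step is the continuity $A(u_k)\to A(u)$ and $B(u_k)\to B(u)$ along this sequence: for $A$ one uses $\big||u_k|^q-|u|^q\big|\le|u_k-u|^q$ (valid since $0<q<1$) to get $a|u_k|^q\to a|u|^q$ in $L^{2N/(N+\alpha_1)}$ and then the Hardy--Littlewood--Sobolev inequality of Proposition \ref{hls}; for $B$ one uses that $2pN/(N+\alpha_2)\in[2,2^*)$ --- this is exactly where the strict bound $p<2^*_{\alpha_2}$ enters --- so that $u_k\to u$ strongly in $L^{2pN/(N+\alpha_2)}$, hence $|u_k|^p\to|u|^p$ in $L^{2N/(N+\alpha_2)}$, and HLS again. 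Since $B(u_k)\le C\|u_k\|^{2p}=C$ is bounded while $B(u_k)^{(1-q)/(p-1)}A(u_k)=C_{p,q,\mu}/\Lambda_n(u_k)\to C_{p,q,\mu}/\lambda^*>0$, the quantities $A(u_k)$ are bounded away from $0$, so $A(u)>0$ and $B(u)>0$; in particular $u\ne 0$. Finally, weak lower semicontinuity of the norm gives $\|u\|\le\liminf_k\|u_k\|=1$, whence $\Lambda_n(u)=C_{p,q,\mu}\|u\|^{2(p-q)/(p-1)}/\big(B(u)^{(1-q)/(p-1)}A(u)\big)\le\lim_k\Lambda_n(u_k)=\lambda^*$; combined with $\Lambda_n(u)\ge\lambda^*$ this gives $\Lambda_n(u)=\lambda^*$, i.e. the infimum is attained.

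I expect the main obstacle to be precisely this passage to the limit in the two nonlocal terms along the minimizing sequence, and in particular ruling out vanishing of $A(u_k)$. The singular exponent $q<1$ forces the use of the subadditive bound $\big||a|^q-|b|^q\big|\le|a-b|^q$ rather than a mean-value argument, and the strict subcriticality $p<2^*_{\alpha_2}$ is essential to upgrade weak convergence to strong convergence in $L^{2pN/(N+\alpha_2)}$; without it the convolution term $B$ need not be sequentially continuous and the infimum might fail to be attained.
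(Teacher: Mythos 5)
Your proof is correct, and items (i) and (iii) follow essentially the same route as the paper: homogeneity by direct exponent cancellation, the uniform lower bound $\Lambda_n\geq c_0>0$ from the estimates of Proposition \ref{hl}, and attainment via a normalized minimizing sequence, the compact embedding $X\hookrightarrow L^r(\mathbb{R}^N)$ for $r\in[2,2^*)$, continuity of $A$ and $B$ along that sequence, and weak lower semicontinuity of the norm. The two places where you genuinely diverge are worth noting. For item (ii) the paper takes an abstract weakly null sequence with $\|w_k\|=1$ that does not converge strongly (e.g.\ an orthonormal sequence) and lets the compact embedding kill $A(w_k)$ and $B(w_k)$; you instead build an explicit concentrating family $w_k(x)=k^{(N-2)/2}\varphi(kx)$, for which $\|\nabla w_k\|_2$ is fixed, $\|w_k\|_2\to 0$, and hence $A(w_k)\leq C\|a\|_s^2\|w_k\|_2^{2q}\to 0$ while the numerator stays bounded away from zero. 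Both are valid; yours is more constructive and makes transparent exactly which quantity ($\|w_k\|_2$) drives the blow-up, while the paper's is shorter but leans implicitly on the infinite-dimensionality of $X$. In item (iii) your argument that the weak limit is nontrivial --- namely that $B(u_k)^{(1-q)/(p-1)}A(u_k)=C_{p,q,\mu}/\Lambda_n(u_k)\to C_{p,q,\mu}/\lambda^*>0$ forces $A(u_k)$ and $B(u_k)$ to stay bounded away from zero, whence $A(u)>0$ --- is an actual improvement: the paper simply asserts $w\neq 0$ for the weak limit without justification, so your reasoning closes a small gap in the published proof. Your careful treatment of $A(u_k)\to A(u)$ via the subadditive bound $\bigl||s|^q-|t|^q\bigr|\leq|s-t|^q$ and the Hardy--Littlewood--Sobolev inequality is also more robust than the paper's appeal to dominated convergence, though both reach the same conclusion.
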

	\begin{proof}  
		The proof of item $i)$ follows by using a standard argument. To establish the proof of item ii), we shall prove that $\Lambda_n(u)\geq C>0$. Let $(u_k)_{k\in\mathbb{N}}$ be a minimizing sequence. In view of Proposition \ref{hl} we infer that 
		\begin{eqnarray}\label{im}
			A(u_k)\leq C_1\|a\|^2_{s}\|u_k\|^{2q},\ \ 	B(u_k)\leq C_2\|u_k\|^{2p}. 
		\end{eqnarray}
		As a consequence, we deduce that
		\begin{eqnarray}
			\Lambda_n(u_k)&=&C_{p,q,\mu}\frac{\|u_k\|^{2\left(\frac{p-q}{p-1}\right)}}{B(u_k)^{\frac{1-q}{p-1}}A(u_k)}
			\geq\frac{C_{p,q,\mu}}{C_2 C_1\|a\|^2_{s}}\frac{\|u_k\|^{2\left(\frac{p-q}{p-1}\right)}}{\|u_k\|^{2p\left(\frac{1-q}{p-1}\right)}\|u_k\|^{2q}}=\frac{C_{p,q,\mu}}{C_2 C_1\|a\|^2_{s}}=K_1>0. \nonumber
		\end{eqnarray}
		Now, consider a sequence $(w_k)_{k\in\mathbb{N}}$ with $\|w_k\|=1$ such that $w_k\rightharpoonup 0$ and $(w_k)_{k\in\mathbb{N}}$ does not strong converges to zero. In light of the compact embedding already mentioned, we infer that $w_k\rightarrow 0$ in $L^r(\mathbb{R}^N)$ for each $r\in[2,2^*)$ and $w_k\rightarrow 0$ a.e. in $\mathbb{R}^N$. Hence, by using the Dominated Convergence Theorem, we prove that $A(w_k)\rightarrow0$ and $B(w_k)\rightarrow0$. Thus,
		$$\Lambda_n(w_k)=C_{p,q,\mu}\frac{\|w_k\|^{2\left(\frac{p-q}{p-1}\right)}}{B(w_k)^{\frac{1-q}{p-1}}A(w_k)}=\frac{C_{p,q,\mu}}{B(w_k)^{\frac{1-q}{p-1}}A(w_k)}\rightarrow +\infty\quad\text{as}\quad k\rightarrow+\infty.$$
		
		Now, we shall prove the item iii). Let $(u_k)_{k\in\mathbb{N}}$ be a minimizing sequence. Now, we consider the normalized sequence $w_k = u_k/\|u_k\|$. 
		Since $\Lambda_n$ is zero-homogeneous, we deduce that $(w_k)_{k\in\mathbb{N}}$ is also a minimizing sequence satisfying $\|w_k\|=1$. Thus, there exists $w \in X$ such that $w_k\rightharpoonup w$. As a consequence, by using the compact embedding $X\hookrightarrow L^r(\mathbb{R}^N)$, for each $r\in[2,2^*)$, we obtain that $w_k\rightarrow w$ in $ L^r(\mathbb{R}^N)$ where $r\in[2,2^*)$ and $w_k\rightarrow w$ a.e in $\mathbb{R}^N$. Recall also that
		$|w_k|\leq h_r$ where $h_r\in L^r(\mathbb{R}^N)$.
		Therefore, by using the Dominated Convergence Theorem, we obtain that $A(w_k)\rightarrow A(w)$ and $B(w_k)\rightarrow B(w)$
		as $k\rightarrow+\infty$. Recall also that $w_k\rightharpoonup w$ in $X$ where $w\neq0$. As a consequence, we obtain that  $\|w\|\leq\liminf_{k\rightarrow+\infty}\|w_k\|$. Under these conditions we obtain that 
		\begin{eqnarray}
			\lambda^*=\lim_{k\rightarrow+\infty}\Lambda_n(w_k)=\liminf_{k\rightarrow+\infty}C_{p,q,\mu}\frac{\|w_k\|^{2\left(\frac{p-q}{p-1}\right)}}{B(w_k)^{\frac{1-q}{p-1}}A(w_k)}\geq C_{p,q,\mu}\frac{\|w\|^{2\left(\frac{p-q}{p-1}\right)}}{B(w)^{\frac{1-q}{p-1}}A(w)}\geq\lambda^*. \nonumber
		\end{eqnarray}
		This ends the proof.
	\end{proof} 
	\begin{rmk}\label{lambda_e}
		Since $\Lambda_e(u)=p^{\frac{1-q}{p-1}}q\Lambda_n(u)$ we infer that the results such as lower bounds, attained minimum and continuity are also verfied to the functional $\Lambda_e$. Furthermore, we deduce that	$\Lambda_e(u)<\Lambda_n(u)$ holds for each $u \in X \{0\}$.
		In order to do that is enough to prove that $p^{\frac{1-q}{p-1}}q<1$. The last estimate is equivalent to the following inequality: 
		$$\left(\frac{1-q}{p-1}\right)\ln p+\ln q<\ln 1=0.$$
		Hence, we consider the function $f: \mathbb{R} \to \mathbb{R}$ given by $$f(q):=\left(\frac{1-q}{p-1}\right)\ln p+\ln q.$$
		It is easy to verify that $$f^\prime(q)=-\frac{\ln p}{p-1}+\frac{1}{q}.$$
		As a consequence,  $f^\prime(q) > 0$ holds for each $q \in (0, 1)$. Therefore, we obtain that $f(q) < f(1) = 0$.
	\end{rmk}
	
	\begin{rmk}\label{2.4}
		$R_n(tu)=R_e(tu)$ if, and only if $t=t_e(u)$. In fact, assuming that $R_n(tu)=R_e(tu)$, we obtain that
		\begin{eqnarray}
			\frac{1}{2q}[t^{2-2q}\|u\|^2-\mu t^{2p-2q}B(u)]&=&\frac{1}{2}t^{2-2q}\|u\|^2-\frac{\mu}{2p}t^{2p-2q}B(u). \nonumber
		\end{eqnarray} Thus, we obtain the following equality
		\begin{eqnarray}
			\left(\frac{1}{2q}-\frac{1}{2}\right)t^{2-2q}\|u\|^2+\left(\frac{1}{2p}-\frac{1}{2q}\right)\mu t^{2p-2q}B(u)=0. \nonumber 
		\end{eqnarray}
		Therefore, the last identity implies that
		\begin{eqnarray}
			t=\left[\left(\frac{1-q}{p-q}\right)p\frac{\|u\|^2}{\mu B(u)}\right]^{\frac{1}{2p-2}}=t_e(u). \nonumber
		\end{eqnarray}
	\end{rmk}
	\begin{prop}\label{2.3}
		Suppose $(h_1)-(h_3)$ holds. Then, for each $\lambda\in(0,\lambda^*)$ and $u\in X\setminus\{0\}$, the fibering map $\phi(t)=J(tu)$ has exactly two critical points denoted by $t^{n,+}(u)$ and $t^{n,-}(u)$ in such way that $0<t^{n,+}(u)<t_n(u)<t^{n,-}(u)$. Moreover, we consider the following statements:
		\begin{itemize}
			\item [i)] The function $t^{n,+}(u)$ is a local minimum point for the fibering map $\phi$ such that $t^{n,+}(u)u\in\mathcal{N}^+$. Furthermore, the functional $t^{n,-}(u)$ is a local maximum point for the fibering map $\phi$ such that $t^{n,-}(u)u\in\mathcal{N}^-$.
			\item[ii)] The functions $u\mapsto t^{n,+}(u)$ and $u\mapsto t^{n,-}(u)$ belongs to $C^0(X\setminus\{0\},\mathbb{R})$.
		\end{itemize}
	\end{prop}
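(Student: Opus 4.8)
The plan is to reduce the study of the fibering map $\phi(t)=J(tu)$ entirely to the one–variable map $Q_n(t)=R_n(tu)$ analysed just above. The computation I would record first is that, for every $t>0$,
\begin{equation*}
	\phi'(t)=J'(tu)u=\frac{1}{t}J'(tu)(tu)=t^{2q-1}A(u)\bigl(Q_n(t)-\lambda\bigr),
\end{equation*}
which follows by a direct calculation from the explicit expressions for $J(tu)$ and $R_n(tu)$ together with Remark \ref{2o1}. Since $a$ is a positive function and $u\neq0$, both $A(u)>0$ and $B(u)>0$; hence the critical points of $\phi$ on $(0,+\infty)$ are exactly the solutions of $Q_n(t)=\lambda$, and the sign of $\phi'(t)$ agrees with the sign of $Q_n(t)-\lambda$.

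Next I would feed in the monotonicity picture of $Q_n$ already established before the statement: $Q_n(t)\to0$ as $t\to0^+$, $Q_n$ is strictly increasing on $(0,t_n(u))$ and strictly decreasing on $(t_n(u),+\infty)$ with $Q_n(t)\to-\infty$ as $t\to+\infty$, and its global maximum equals $\Lambda_n(u)$. Because $\lambda\in(0,\lambda^*)$ and $\lambda^*=\inf_{v\in X\setminus\{0\}}\Lambda_n(v)$ with $\lambda^*>0$ by Lemma \ref{lema2.1}, we have $\Lambda_n(u)\geq\lambda^*>\lambda>0$. Therefore $Q_n(t)=\lambda$ has exactly two roots, one $t^{n,+}(u)\in(0,t_n(u))$ and one $t^{n,-}(u)\in(t_n(u),+\infty)$, which gives both the count and the ordering $0<t^{n,+}(u)<t_n(u)<t^{n,-}(u)$. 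For item (i), the displayed identity shows $\phi'<0$ on $(0,t^{n,+}(u))$, $\phi'>0$ on $(t^{n,+}(u),t^{n,-}(u))$ and $\phi'<0$ on $(t^{n,-}(u),+\infty)$, so $t^{n,+}(u)$ is a local minimum and $t^{n,-}(u)$ a local maximum of $\phi$. Moreover $Q_n(t^{n,\pm}(u))=\lambda$ forces $t^{n,\pm}(u)u\in\mathcal N$ by Remark \ref{2o1}; and since $t^{n,\pm}(u)\neq t_n(u)$, the unique critical point of $Q_n$, we have $Q_n'(t^{n,+}(u))>0$ and $Q_n'(t^{n,-}(u))<0$, so the proposition above comparing $R_n'(tu)u$ with $J''(tu)(tu,tu)$ (equivalently, differentiating the displayed identity at a critical point and using $t^2\phi''(t)=J''(tu)(tu,tu)$ there) yields $J''(t^{n,+}(u)u)(t^{n,+}(u)u,t^{n,+}(u)u)>0$ and $J''(t^{n,-}(u)u)(t^{n,-}(u)u,t^{n,-}(u)u)<0$, i.e.\ $t^{n,+}(u)u\in\mathcal N^+$ and $t^{n,-}(u)u\in\mathcal N^-$.

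For item (ii) I would argue by a subsequence/compactness argument. Fix $u_0\in X\setminus\{0\}$ and let $u_k\to u_0$ in $X$. The continuous embeddings together with the Hardy–Littlewood–Sobolev and Hölder estimates of Proposition \ref{hl} give $\|u_k\|\to\|u_0\|$, $A(u_k)\to A(u_0)>0$, $B(u_k)\to B(u_0)>0$, hence $t_n(u_k)\to t_n(u_0)$ and $R_n(tu_k)\to R_n(tu_0)$ uniformly for $t$ in compact subsets of $(0,+\infty)$. One then checks that $t^{n,\pm}(u_k)$ stays in a fixed compact subset of $(0,+\infty)$: $t^{n,+}(u_k)<t_n(u_k)$ is bounded above and $t^{n,-}(u_k)>t_n(u_k)$ is bounded below; the identity $R_n(tu_k)=\lambda>0$ forces $\|u_k\|^2>\mu t^{2p-2}B(u_k)$, which bounds $t^{n,-}(u_k)$ above; and $R_n(tu_k)\to0$ as $t\to0$ uniformly in $k$ rules out $t^{n,+}(u_k)\to0$. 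Taking a subsequence with $t^{n,\pm}(u_k)\to t^*\in(0,+\infty)$, continuity gives $R_n(t^*u_0)=\lambda$ with $t^*\leq t_n(u_0)$ (resp.\ $t^*\geq t_n(u_0)$); since $R_n(t_n(u_0)u_0)=\Lambda_n(u_0)>\lambda$, uniqueness of the root of $t\mapsto R_n(tu_0)$ on each side of $t_n(u_0)$ forces $t^*=t^{n,+}(u_0)$ (resp.\ $t^{n,-}(u_0)$). As every subsequence has a further subsequence with this same limit, the whole sequence converges, which proves continuity. I expect the only genuinely technical point to be exactly this last step — securing the uniform-in-$k$ bounds that keep $t^{n,\pm}(u_k)$ away from $0$ and $+\infty$; the existence, count, ordering and classification in the first part are immediate from the monotonicity of $Q_n$.
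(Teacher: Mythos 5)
Your proposal is correct. The first part — existence, count, ordering, and the classification $t^{n,+}(u)u\in\mathcal{N}^+$, $t^{n,-}(u)u\in\mathcal{N}^-$ — follows exactly the paper's route: reduce to the scalar equation $Q_n(t)=\lambda$, use $\Lambda_n(u)\geq\lambda^*>\lambda$ together with the monotonicity of $Q_n$ on either side of $t_n(u)$, and transfer the sign of $Q_n'$ to the sign of $J''(tu)(tu,tu)$ via Proposition \ref{2.4}; your explicit identity $\phi'(t)=t^{2q-1}A(u)\bigl(Q_n(t)-\lambda\bigr)$ is a clean way to package what the paper does implicitly. Where you genuinely diverge is item (ii): the paper invokes the Implicit Function Theorem for $L^{\pm}(t,u)=J'(tu)tu$, using $\partial_t L^{\pm}\neq 0$ on $\mathcal{N}^{\pm}$, whereas you run a subsequence/compactness argument based only on the continuity of $\|\cdot\|$, $A$ and $B$, uniform convergence of $t\mapsto R_n(tu_k)$ on compacta, and a priori bounds keeping $t^{n,\pm}(u_k)$ in a fixed compact subset of $(0,+\infty)$. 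Your version is slightly more laborious but arguably more robust here: because of the singular term the functional is not $C^1$ in $u$, so an IFT argument must be phrased carefully in the $u$-variable, while your approach needs nothing beyond $C^0$ regularity of $A$ and $B$ and the strict separation $Q_n(t_n(u_0))>\lambda$ that pins the subsequential limit to the correct root. Both arguments yield the same conclusion; the a priori bounds you flag as the technical crux (upper bound on $t^{n,-}(u_k)$ from $\lambda A(u_k)>0$ forcing $t^{2p-2}\leq\|u_k\|^2/(\mu B(u_k))$, lower bound on $t^{n,+}(u_k)$ from $t^{2-2q}\|u_k\|^2\geq\lambda A(u_k)$) do close correctly since $A(u_0),B(u_0)>0$.
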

	\begin{proof} i) Consider $0<\lambda<\lambda^*$ and $u\in X\setminus\{0\}$ a fixed function. Hence, we infer $R_n(t_n(u)u)=Q(t_n(u))\geq\lambda^*>\lambda$. Here was used the fact that $Q_n(t_n(u))=\max_{t>0}Q_n(t)$ and $\Lambda_n(u)=R_n(t_n(u)u)$. Therefore, the equation $Q_n(t)=R_n(tu)=\lambda$ has exactly two solutions. Here was used the fact that 
		$$\lim_{t\rightarrow 0}Q_n(t)=0\quad\text{and}\quad\lim_{t\rightarrow+\infty}Q_n(t)=-\infty.$$
		
		Now, we consider the two roots denoted as $t^{n,+}(u)$ and $t^{n,-}(u)$ which satisfy $0<t^{n,+}(u)<t_n(u)<t^{n,-}(u).$
		It is worth noting that $t^{n,+}(u)$ and $t^{n,-}(u)$ are critical points of the fibering map $\phi(t) = J(tu)$. Recall also that $R_n(t^{n,+}(u)u) = \lambda$ if, and only if $t^{n,+}(u)u \in \mathcal{N}$. Furthermore, we observe that 
		$R_n(t^{n,-}(u)u) = \lambda$ if, and only if $t^{n,-}(u)u \in \mathcal{N}$. Under these conditions, we obtain that $Q_n'(t^{n,+}(u))>0$ and $Q_n'(t^{n,-}(u))<0$. Now, by using Proposition \ref{2.4}, we infer that
		$$0<Q^\prime_n(t^{n,+}(u))=\frac{1}{t^{n,+}(u)}\frac{J^{\prime\prime}(t^{n,+}(u)u)(t^{n,+}(u)u,t^{n,+}(u)u)}{A(t^{n,+}(u)u)}.$$
		Hence, $t^{n,+}(u)u\in\mathcal{N}^+$. In the same way, we conclude that $t^{n,-}(u)u\in\mathcal{N}^-$ due to the fact that $Q_n'(t^{n,-}(u))<0$. This ends the proof of the item $i)$.
		
		Now we shall prove the item $ii)$. As was mentioned for each $\lambda\in(0,\lambda^*)$ we obtain that $\lambda<Q_n(t_n(u))=R_n(t_n(u)u)$ holds for all $u\in X\setminus\{0\}$. Hence, we can obtain two roots for the equation $R_n(tu)=\lambda$. These two roots satisfy $0<t^{n,+}(u)<t_n(u)<t^{n,-}(u)$ with $t^{n,+}(u)u\in\mathcal{N}^+$ and $t^{n,-}(u)u\in\mathcal{N}^-$. Consequently, we deduce that $\mathcal{N}=\mathcal{N}^+\cup\mathcal{N}^-$ for each $\lambda\in(0,\lambda^*)$. It is important to observe that $Q_n\in C^1(\mathbb{R}^+,X)$, $Q^\prime_n(t^{n,+})>0$ and $Q^\prime_n(t^{n,-})<0$. The desired result follows from the Implicit Function Theorem \cite{drabek}, that is, the maps $u\mapsto t^{n,+}(u)$ and $u\mapsto t^{n,-}(u)$ belong to $C^0(X\setminus\{0\},\mathbb{R})$ for any $\lambda\in(0,\lambda^*)$. In fact, we define the function $L^{\pm}:(0,+\infty)\times(X\setminus\{0\})\rightarrow\mathbb{R}$ given by $L^{\pm}(t,u)=J^\prime(tu)tu$. Hence, $L^{\pm}(t,u)=0$ if and only if $tu\in\mathcal{N}$. Furthermore, we infer that $\frac{\partial}{\partial t}L^{\pm}(t,u)\not=0$ for $(t,u)\in(0,+\infty)\times(X\setminus\{0\})$ such that $tu\in\mathcal{N}^{\pm}$. This ends the proof.
	\end{proof}
	Now, under certain conditions imposed on $\lambda>0$,  we shall verify that the set $\mathcal{N}^0$ is empty which implies that $\mathcal{N}=\mathcal{N}^+\cup\mathcal{N}^-$.
	\begin{prop}\label{2.55}
		Suppose $(h_1)-(h_3)$ holds. Then, for each $\lambda\in(0,\lambda^*)$, the set $\mathcal{N}^0$ is empty.
	\end{prop}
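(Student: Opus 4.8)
The plan is to argue by contradiction, exploiting the fact that along any ray $t\mapsto tu$ the Rayleigh quotient $R_n$ collapses to the one–variable fibering map $Q_n(t)=R_n(tu)$, which has a \emph{unique} critical point, namely the global maximum point $t_n(u)$, with $Q_n(t_n(u))=\Lambda_n(u)\ge\lambda^*$. So fix $\lambda\in(0,\lambda^*)$ and suppose, for contradiction, that there is $w\in\mathcal{N}^0$. Since $w\in\mathcal{N}$, the equivalence $u\in\mathcal{N}\iff\lambda=R_n(u)$ gives $R_n(w)=\lambda$; in particular, taking $t=1$ and $u=w$, the hypothesis of Proposition \ref{2.4} (the sign equivalence between $R_n'(tu)u$ and $J''(tu)(tu,tu)$ on $\mathcal{N}$) is satisfied.

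Next I would translate the defining condition $J''(w)(w,w)=0$ of $\mathcal{N}^0$ through Proposition \ref{2.4} applied at $t=1$: it yields $R_n'(w)w=0$. On the other hand, since $R_n(w)=\lambda$, differentiating $t\mapsto R_n(tw)$ gives $Q_n'(1)=R_n'(w)w=0$, so $t=1$ is a critical point of $Q_n$. By uniqueness of the critical point of $Q_n$ we must have $t_n(w)=1$, and therefore
$$\lambda=R_n(w)=Q_n(1)=Q_n(t_n(w))=\Lambda_n(w)\ge\inf_{v\in X\setminus\{0\}}\Lambda_n(v)=\lambda^*,$$
which contradicts $\lambda<\lambda^*$. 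Hence no such $w$ exists, i.e.\ $\mathcal{N}^0=\emptyset$ for every $\lambda\in(0,\lambda^*)$.

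An alternative, purely computational route is to combine the two relations $\|w\|^2=\lambda A(w)+\mu B(w)$ (from $w\in\mathcal{N}$) and $\|w\|^2=q\lambda A(w)+p\mu B(w)$ (from $J''(w)(w,w)=0$), solve for $\lambda$ in terms of $A(w)$, $B(w)$, $\|w\|$, and substitute into $\Lambda_n(w)=C_{p,q,\mu}\|w\|^{2(p-q)/(p-1)}/(B(w)^{(1-q)/(p-1)}A(w))$; the powers of $B(w)$ and of $\mu$ cancel and one lands on the identity $\Lambda_n(w)=\lambda$, producing the same contradiction. Either way there is no serious analytic obstacle: uniqueness and maximality of $t_n(u)$, the identity $Q_n(t_n(u))=\Lambda_n(u)$, the lower bound $\lambda^*=\inf\Lambda_n>0$ from Lemma \ref{lema2.1}, and Proposition \ref{2.4} are all already available. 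The only point requiring care is the bookkeeping — invoking Proposition \ref{2.4} precisely at the scale $t=1$, so that $w$ itself (and not a rescaling $tw$) is the element of $\mathcal{N}^0$ under scrutiny, and keeping track of the normalization in the chain-rule identity $Q_n'(1)=R_n'(w)w$.
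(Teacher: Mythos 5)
Your proof is correct and follows essentially the same route as the paper: both arguments use the identity relating $R_n'(tw)w$ to $J''(tw)(tw,tw)$ from Proposition \ref{2.4} at $t=1$ to conclude that $t_n(w)=1$ is the unique critical point of $Q_n$, whence $\lambda=R_n(w)=Q_n(t_n(w))=\Lambda_n(w)\geq\lambda^*$, contradicting $\lambda<\lambda^*$. The bookkeeping points you flag (applying the sign equivalence precisely at the scale $t=1$) are exactly the ones the paper's own proof relies on.
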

	\begin{proof} Suppose that there exists $u\in\mathcal{N}^0$ since $0<\lambda<\lambda^*$. In this case, we obtain that:
		\begin{equation}\label{aq}
			\lambda<\lambda^*=\inf_{w\in X\setminus\{0\}}\Lambda_n(w)\leq\Lambda_n(u)=\max_{t>0}Q_n(t)=R_n(t_n(u)u).
		\end{equation}
		According to Proposition \ref{2.4} we obtain that
		\begin{eqnarray}
			\frac{d}{dt}R_n(tu)=\frac{1}{t}\frac{J^{\prime\prime}(tu)(tu,tu)}{t^{2q}A(u)}. \nonumber
		\end{eqnarray}
		As a consequence, we establish the following statement:
		\begin{eqnarray}
			\frac{d}{dt}R_n(tu)|_{t=1}=0\quad\text{if, and only if}\quad t_n(u)=1. \nonumber 
		\end{eqnarray}
		Hence, we can conclude that $R_n(t_n(u)u)=\lambda$ which contradicts \eqref{aq}. Therefore, the set $\mathcal{N}^0$ is empty for each $\lambda\in(0,\lambda^*)$.
	\end{proof}
	\begin{rmk}
		
		Let $u\in X\setminus\{0\}$ be a fixed function. Assume also that $\Lambda_n(u) = \lambda$.  Hence, we can derive the following result:
		\begin{eqnarray}
			\lambda=\Lambda_n(u)=R_n(t_n(u)u)=Q_n(t_n(u)). 
		\end{eqnarray}
		This implies that $t_n(u)u\in\mathcal{N}$. Hence, 
		$$\frac{d}{dt}R_n(tu)|_{t=t_n(u)}=0\quad\text{if, and only if}\quad J^{\prime\prime}(t_n(u)u)(t_n(u)u,t_n(u)u)=0.$$
		Therefore, $t_n(u)u\in\mathcal{N}^0$ holds for each $\lambda \geq \lambda^*$. In particular, $\mathcal{N}^0$ is nonempty for $\lambda = \lambda^*$.
	\end{rmk}
	
	Now, using the same ideas discussed in the proof of Proposition \ref{2.3}, we can consider the function $Q_e$ proving the following result:
	\begin{prop}
		Suppose $(h_1)-(h_3)$ holds. Then, for each $\lambda\in(0,\lambda_*)$, there exist two points $t^{e,+}(u)$ and $t^{e,-}(u)$ such that $0<t^{e,+}(u)<t_e(u)<t^{e,-}(u)$ and $J(t^{e,-}(u)u)=J(t^{e,+}(u)u)=0$. Recall also that $Q^\prime_e(t^{e,-}(u))<0$ and $Q^\prime_e(t^{e,+}(u))>0$. Moreover, we obtain that the function $u\mapsto t^{e,+}(u)$ and $u\mapsto t^{e,-}(u)$ belongs to $C^0(X\setminus\{0\},\mathbb{R})$. Furthermore, we mention that
		$$0<t^{n,+}(u)<t^{e,+}(u)<t_n(u)<t_e(u)<t^{n,-}(u)<t^{e,-}(u)<+\infty$$
		holds for each $\lambda\in(0,\lambda_*)$.
	\end{prop}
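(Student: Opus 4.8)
The plan is to repeat, with the fibering quotient $Q_e$ playing the role of $Q_n$, the argument used in the proof of Proposition \ref{2.3}, and then to compare $Q_n$ with $Q_e$ in order to interlace their roots. Recall from the analysis carried out before Lemma \ref{lema2.1} that $Q_e(t)=R_e(tu)$ satisfies $Q_e(0^{+})=0$, $Q_e(t)\to-\infty$ as $t\to+\infty$, is strictly increasing on $(0,t_e(u))$ and strictly decreasing on $(t_e(u),+\infty)$, and attains its maximum $\max_{t>0}Q_e(t)=Q_e(t_e(u))=\Lambda_e(u)$. Fix $u\in X\setminus\{0\}$ and $\lambda\in(0,\lambda_*)$. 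By Remark \ref{lambda_e} the functional $\Lambda_e$ inherits from $\Lambda_n$ the attainment and strict positivity of its infimum, and since $\lambda_*=\inf_{v\in X\setminus\{0\}}\Lambda_e(v)$ we obtain $\lambda<\lambda_*\le\Lambda_e(u)=\max_{t>0}Q_e(t)$. Hence $Q_e$ climbs from $0$ past the level $\lambda$ on $(0,t_e(u))$ and then decreases to $-\infty$ on $(t_e(u),+\infty)$, so the equation $Q_e(t)=\lambda$ has exactly two roots $t^{e,+}(u)\in(0,t_e(u))$ and $t^{e,-}(u)\in(t_e(u),+\infty)$; thus $0<t^{e,+}(u)<t_e(u)<t^{e,-}(u)$. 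Strict monotonicity on each side of $t_e(u)$ forces $Q_e'(t^{e,+}(u))>0$ and $Q_e'(t^{e,-}(u))<0$, and by Remark \ref{2o1} the identity $Q_e(t^{e,\pm}(u))=R_e(t^{e,\pm}(u)u)=\lambda$ is exactly $J(t^{e,\pm}(u)u)=0$.

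The continuity of $u\mapsto t^{e,\pm}(u)$ follows as in Proposition \ref{2.3}, via the Implicit Function Theorem. Making the dependence on $u$ explicit, set $\Psi(t,u):=Q_e(t)-\lambda$; by Proposition \ref{hl} this map is continuous on $(0,+\infty)\times(X\setminus\{0\})$, is smooth in $t$, and satisfies $\partial_t\Psi(t^{e,\pm}(u),u)=Q_e'(t^{e,\pm}(u))\neq0$. Hence near any $u$ the equation $\Psi(t,v)=0$ has a unique continuous branch, which by the uniqueness of the roots of $Q_e(\cdot)=\lambda$ on $(0,t_e(v))$ and on $(t_e(v),+\infty)$ must coincide with $v\mapsto t^{e,+}(v)$ and $v\mapsto t^{e,-}(v)$ respectively; thus both maps lie in $C^0(X\setminus\{0\},\mathbb{R})$.

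For the chain of inequalities I would proceed as follows. First, $t_n(u)<t_e(u)$ because $t_e(u)=p^{1/(2p-2)}t_n(u)$ with $p>1$. Next, writing $a:=\|u\|^2/A(u)>0$ and $b:=\mu B(u)/A(u)>0$, one computes
\[
Q_n(t)-Q_e(t)=t^{2-2q}\Bigl[(1-q)a-\tfrac{p-q}{p}\,b\,t^{2p-2}\Bigr],
\]
which is positive for $t<t_e(u)$, vanishes at $t=t_e(u)$, and is negative for $t>t_e(u)$, the threshold being precisely $t_e(u)$ because $t_e(u)^{2p-2}=p(1-q)a/((p-q)b)$. Consequently, at $t^{e,+}(u)<t_e(u)$ we have $Q_n(t^{e,+}(u))>Q_e(t^{e,+}(u))=\lambda$; since Proposition \ref{2.3} gives $\{t>0:Q_n(t)>\lambda\}=(t^{n,+}(u),t^{n,-}(u))$, it follows that $t^{n,+}(u)<t^{e,+}(u)<t^{n,-}(u)$. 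In the same way, at $t^{e,-}(u)>t_e(u)$ we get $Q_n(t^{e,-}(u))<\lambda$, and since $t^{e,-}(u)>t_e(u)>t_n(u)>t^{n,+}(u)$ this forces $t^{e,-}(u)>t^{n,-}(u)$. To place $t_e(u)$ strictly below $t^{n,-}(u)$ I would substitute $t_e(u)^{2p-2}=p\,t_n(u)^{2p-2}$ into $Q_n$ and use the resulting identity $Q_n(t_e(u))=p^{(1-q)/(p-1)}q\,\Lambda_n(u)=\Lambda_e(u)\ge\lambda_*>\lambda$, so that $t_e(u)\in(t^{n,+}(u),t^{n,-}(u))$ and, in particular, $t_e(u)<t^{n,-}(u)$.

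The only inequality still missing is $t^{e,+}(u)<t_n(u)$, and this is the step I expect to be the main obstacle. Because $Q_e$ is strictly increasing on $(0,t_e(u))$ and $t_n(u)<t_e(u)$, it is equivalent to $Q_e(t_n(u))>\lambda$, i.e. to $R_e(t_n(u)u)>\lambda$; evaluating $Q_e$ at $t_n(u)$ one finds
\[
Q_e(t_n(u))=\frac{q(p+1-q)}{p}\,\Lambda_n(u),
\]
so everything reduces to verifying $\tfrac{q(p+1-q)}{p}\,\Lambda_n(u)>\lambda$ for all $u\in X\setminus\{0\}$ and all $\lambda\in(0,\lambda_*)$. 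Since $\lambda$ can be taken arbitrarily close to $\lambda_*=p^{(1-q)/(p-1)}q\,\lambda^*$ while $\Lambda_n(u)$ can be as small as $\lambda^*$, this forces one to compare the two constants $\tfrac{q(p+1-q)}{p}$ and $p^{(1-q)/(p-1)}q$, equivalently $p+1-q$ and $p^{(p-q)/(p-1)}$; controlling this elementary but delicate inequality (and, if necessary, imposing extra restrictions on $p$ and $q$, in the spirit of Remark \ref{lambda_e}) is the heart of the matter. Once this estimate is secured, combining it with the comparisons established above yields the full chain $0<t^{n,+}(u)<t^{e,+}(u)<t_n(u)<t_e(u)<t^{n,-}(u)<t^{e,-}(u)<+\infty$, which completes the proof.
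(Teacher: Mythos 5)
The paper offers no actual proof of this proposition (it is stated as following from ``the same ideas'' as Proposition \ref{2.3}), so your write-up is in effect the only detailed argument on the table, and most of it is sound: the existence of exactly two roots $t^{e,\pm}(u)$ of $Q_e(t)=\lambda$ when $\lambda<\lambda_*\le\Lambda_e(u)=\max_{t>0}Q_e(t)$, the signs of $Q_e'$ at those roots, the continuity of $u\mapsto t^{e,\pm}(u)$, and the comparisons $t^{n,+}<t^{e,+}<t^{n,-}<t^{e,-}$ and $t_n<t_e<t^{n,-}$, which you obtain from the sign of $Q_n-Q_e$ on either side of $t_e(u)$ and from $Q_n(t_e(u))=Q_e(t_e(u))=\Lambda_e(u)\ge\lambda_*>\lambda$, are all correct.

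The step you flagged as the main obstacle, namely $t^{e,+}(u)<t_n(u)$, is however a genuine gap, and it cannot be closed. Your reduction is exact: $Q_e(t_n(u))=\frac{q(p+1-q)}{p}\,\Lambda_n(u)$, so the inequality holds for every $u$ and every $\lambda\in(0,\lambda_*)$ if and only if $\frac{q(p+1-q)}{p}\ge q\,p^{(1-q)/(p-1)}$, i.e. $p+1-q\ge p^{(p-q)/(p-1)}$. This fails for \emph{every} $p>1$ and $q\in(0,1)$: the function $F(q)=\frac{p-q}{p-1}\ln p-\ln(p+1-q)$ satisfies $F(1)=0$, and $F'(q)=-\frac{\ln p}{p-1}+\frac{1}{p+1-q}$ is increasing in $q$ with $F'(1)=\frac1p-\frac{\ln p}{p-1}<0$ (because $p\ln p>p-1$ for $p>1$), so $F>0$ on $[0,1)$, i.e. $p^{(p-q)/(p-1)}>p+1-q$. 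Concretely, for $p=2$ and $q=1/2$ one gets $Q_e(t_n(u))=\frac58\Lambda_n(u)$ while $\lambda_*=\frac{1}{\sqrt{2}}\lambda^*$; taking $u$ a minimizer of $\Lambda_n$ (which exists by Lemma \ref{lema2.1}) and any $\lambda\in\left(\frac58\lambda^*,\frac{1}{\sqrt{2}}\lambda^*\right)$ yields $Q_e(t_n(u))<\lambda$ and hence, since $Q_e$ is increasing on $(0,t_e(u))$, $t^{e,+}(u)>t_n(u)$. So the ordering $t^{e,+}(u)<t_n(u)$ asserted in the statement is not merely unproved by your argument; it is false in general, and the chain should be weakened to the portions you did establish. (This appears to be harmless for the rest of the paper, which only invokes the correct parts of the ordering.)
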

	
	\begin{lem}\label{coercive}
		Suppose $(h_1)-(h_3)$ holds. Then the energy functional $J$ is coercive on the Nehari set $\mathcal{N}$ for every $\lambda>0$. In particular, the functional $J$ is bounded from below on $\mathcal{N}$.
	\end{lem}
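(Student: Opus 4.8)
The plan is to use the Nehari constraint to rewrite $J$ restricted to $\mathcal{N}$ so that only the singular term $A(u)$ remains to be estimated, and then to invoke the growth bound on $A$ established inside the proof of Proposition~\ref{hl}.

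First I would fix $u \in \mathcal{N}$. By definition of $\mathcal{N}$ one has $\|u\|^2 = \lambda A(u) + \mu B(u)$, so that $\mu B(u) = \|u\|^2 - \lambda A(u)$. Substituting this into
$$J(u) = \frac{1}{2}\|u\|^2 - \frac{\lambda}{2q}A(u) - \frac{\mu}{2p}B(u)$$
removes the term $B(u)$ and yields
$$J(u) = \left(\frac{1}{2} - \frac{1}{2p}\right)\|u\|^2 - \lambda\left(\frac{1}{2q} - \frac{1}{2p}\right)A(u) = \frac{p-1}{2p}\,\|u\|^2 - \lambda\,\frac{p-q}{2pq}\,A(u).$$
The reason for eliminating $B$ rather than $A$ is structural: the coefficient of $\|u\|^2$ stays positive because $p>1$, and the only surviving nonlocal contribution is the singular one, which grows strictly slower than $\|u\|^2$.

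Next I would invoke the estimate \eqref{27} from the proof of Proposition~\ref{hl}: there is a constant $C = C(N,\alpha_1,q) > 0$ such that $A(u) \leq C\,\|a\|_s^2\,\|u\|^{2q}$ for all $u \in X$, with $s = 2N/(N+\alpha_1-qN)$ and $\|a\|_s < +\infty$ by $(h_3)$. Combining this with the previous identity gives, for every $u \in \mathcal{N}$,
$$J(u) \geq \frac{p-1}{2p}\,\|u\|^2 - C_\lambda\,\|u\|^{2q} =: g(\|u\|), \qquad C_\lambda := \lambda\,\frac{p-q}{2pq}\,C\,\|a\|_s^2 > 0.$$

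Finally, since $q \in (0,1)$ we have $2q < 2$, hence $g(t) = \frac{p-1}{2p}t^2 - C_\lambda t^{2q} \to +\infty$ as $t \to +\infty$; this gives coercivity of $J$ on $\mathcal{N}$. Moreover $g$ is continuous on $[0,+\infty)$ with $g(0) = 0$ and $g(t) \to +\infty$, so $g$ attains a finite minimum $m := \min_{t \geq 0} g(t) \leq 0$; consequently $J(u) \geq m$ for all $u \in \mathcal{N}$, which proves that $J$ is bounded from below on $\mathcal{N}$. I do not expect any real obstacle in this argument: the only point requiring care is choosing which of the two nonlocal terms to substitute out, and the decisive structural fact is that the singular exponent satisfies $2q < 2$.
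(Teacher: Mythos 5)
Your proof is correct and follows essentially the same route as the paper: substitute the Nehari constraint to eliminate $B(u)$, bound $A(u)\leq C\|a\|_s^2\|u\|^{2q}$ via the Hardy--Littlewood--Sobolev estimate from Proposition \ref{hl}, and conclude from $2q<2$. Your explicit minimization of $g(t)=\frac{p-1}{2p}t^2-C_\lambda t^{2q}$ to get the lower bound is a small but welcome addition that the paper leaves implicit.
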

	\begin{proof} Recall that $\|u\|^2-\lambda A(u)=\mu B(u)$ holds true for each $u\in\mathcal{N}$.
		Hence, we rewrite the functional $J$ as follows
		$$J(u)=\frac{1}{2}\|u\|^2-\frac{1}{2p}\left(\|u\|^2-\lambda A(u)\right)-\frac{\lambda}{2q}A(u)=\left(\frac{1}{2}-\frac{1}{2p}\right)\|u\|^2+\left(\frac{1}{2p}-\frac{1}{2q}\right)\lambda A(u).$$
		Now, by using \eqref{27} and Proposition \ref{hl}, we see that $A(u)\leq C_1\|a\|^2_{s}\|u\|^{2q}$. Here we observe also that $q \in (0,1)$ which imlies that $$J(u)\geq\left(\frac{1}{2}-\frac{1}{2p}\right)\|u\|^2+\left(\frac{1}{2p}-\frac{1}{2q}\right)\lambda C_1\|a\|^2_{s}\|u\|^{2q}.$$  Therefore
		$J(u)\rightarrow+\infty$ as $\|u\|\rightarrow+\infty$. This concludes the proof.
	\end{proof}
	\begin{lem}\label{3.2}
		Suppose $(h_1)-(h_3)$ holds. Also suppose that $\lambda\in(0,\lambda^*]$ holds. Then for each $u\in\mathcal{N}^-\cup\mathcal{N}^0$ and $\lambda\in(0,\lambda^*]$, there exists a constant $C=C(N,p,q)>0$ independent on $\lambda$ such that $\|u\|\geq C$. In particular, $\mathcal{N}^-$ and $\mathcal{N}^0$ are closed sets for $\lambda\in(0,\lambda^*]$.
	\end{lem}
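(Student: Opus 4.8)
The plan is to first extract a positive lower bound on $\|u\|$, \emph{uniform in $\lambda\in(0,\lambda^*]$ and in $u$}, for every $u\in\mathcal{N}^-\cup\mathcal{N}^0$, and then to deduce the closedness statements from this bound together with the continuity of the quantities $\|\cdot\|^2$, $A(\cdot)$ and $B(\cdot)$.

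\emph{Step 1: the uniform lower bound.} Let $u\in\mathcal{N}^-\cup\mathcal{N}^0$. Then $u\neq 0$; membership in $\mathcal{N}$ gives $\|u\|^2=\lambda A(u)+\mu B(u)$; and $J''(u)(u,u)\leq 0$, i.e.
$$2\|u\|^2-2q\lambda A(u)-2p\mu B(u)\leq 0.$$
I would use $\lambda A(u)=\|u\|^2-\mu B(u)$ to eliminate the $\lambda$-dependent term; collecting terms this gives
$$(1-q)\|u\|^2\leq (p-q)\,\mu B(u).$$
Since $q\in(0,1)$ and $p>2_{\alpha_2}>1$, both coefficients are strictly positive, so this step is lossless. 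Next I would invoke the Hardy--Littlewood--Sobolev estimate behind Proposition \ref{hl} (the bound displayed in \eqref{im}), namely $B(u)\leq C_2\|u\|^{2p}$ with $C_2=C_2(N,\alpha_2,p)$. Substituting and dividing by $\|u\|^2>0$ yields $(1-q)\leq (p-q)\mu C_2\|u\|^{2p-2}$, and since $2p-2>0$ this rearranges to
$$\|u\|\geq\left(\frac{1-q}{(p-q)\mu C_2}\right)^{\frac{1}{2p-2}}=:C>0.$$
This $C$ depends only on $N,\alpha_2,p,q$ and the fixed parameter $\mu$; in particular it is independent of $\lambda$ and of the chosen $u$.

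\emph{Step 2: closedness.} Let $(u_k)_{k\in\mathbb{N}}\subset\mathcal{N}^-\cup\mathcal{N}^0$ with $u_k\to u$ strongly in $X$. By Step 1, $\|u\|=\lim_k\|u_k\|\geq C>0$, so $u\neq 0$. The maps $u\mapsto\|u\|^2$, $u\mapsto A(u)$ and $u\mapsto B(u)$ are continuous on $X$, continuity of $A$ and $B$ following from the Hardy--Littlewood--Sobolev inequality and the compact embeddings $X\hookrightarrow L^r(\mathbb{R}^N)$, $r\in[2,2^*)$, exactly as in Proposition \ref{hl} and Lemma \ref{lema2.1}; hence $u\mapsto J'(u)u$ and $u\mapsto J''(u)(u,u)$ are continuous as well. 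Passing to the limit in $J'(u_k)u_k=0$ gives $u\in\mathcal{N}$, and passing to the limit in $J''(u_k)(u_k,u_k)\leq 0$ gives $J''(u)(u,u)\leq 0$; hence $u\in\mathcal{N}^-\cup\mathcal{N}^0$ and $\mathcal{N}^-\cup\mathcal{N}^0$ is closed. Repeating the argument with the equality $J''(u_k)(u_k,u_k)=0$ shows $\mathcal{N}^0$ is closed. When $\lambda\in(0,\lambda^*)$, Proposition \ref{2.55} gives $\mathcal{N}^0=\emptyset$, so $\mathcal{N}^-=\mathcal{N}^-\cup\mathcal{N}^0$ is closed; for $\lambda=\lambda^*$ it is the closedness of $\mathcal{N}^0$ and of $\mathcal{N}^-\cup\mathcal{N}^0$ that is used afterwards.

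\emph{Main obstacle.} There is no genuine analytic difficulty: the lower bound is elementary algebra once the two defining relations of $\mathcal{N}^-\cup\mathcal{N}^0$ are combined. The only things to watch are (i) that $1-q$ and $(p-q)\mu C_2$ are strictly positive, so that dividing by $\|u\|^2$ and taking the $(2p-2)$-th root is legitimate, and (ii) the continuity of $A$ and $B$ along strongly convergent sequences, which is already contained in the estimates proving Proposition \ref{hl}. The only mildly delicate point is the precise meaning of ``$\mathcal{N}^-$ closed'' at $\lambda=\lambda^*$, where one in fact obtains the closedness of $\mathcal{N}^-\cup\mathcal{N}^0$, which is precisely what the later arguments require when $\mathcal{N}^0\neq\emptyset$.
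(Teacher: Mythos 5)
Your proof is correct and is essentially the paper's argument: the inequality $(1-q)\|u\|^2\leq(p-q)\mu B(u)$ you derive from $J''(u)(u,u)\leq 0$ together with the Nehari constraint is exactly the paper's condition $t_n(u)\leq 1$ unwound, and both proofs then conclude via $B(u)\leq C_2\|u\|^{2p}$ and strong convergence for the closedness claim. Your caveat that at $\lambda=\lambda^*$ the argument literally yields closedness of $\mathcal{N}^-\cup\mathcal{N}^0$ (since the strict inequality $J''(u_k)(u_k,u_k)<0$ only passes to the limit as $\leq 0$) is a fair point on which you are more careful than the paper, and it is indeed the union's closedness that the later arguments use.
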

	\begin{proof} Let $u\in\mathcal{N}^-$ be a fixed function with $\lambda\in(0,\lambda^*]$. In this case, we know that $t^{n,-}(u)=1$. Therefore,
		\begin{eqnarray}1=t^{n,-}(u)\geq t_n(u)=\left[\frac{(1-q)\|u\|^2}{(p-q)\mu B(u)}\right]^{\frac{1}{2p-2}}. \nonumber
		\end{eqnarray}
		Now, by using the Hardy-Littlewood-Sobolev inequality, see Proposition \ref{hls} and the continuous embedding  $X\hookrightarrow L^r(\mathbb{R}^N)$,$r\in[2,2^*]$, we infer that
		\begin{eqnarray}
			1\geq\left[\frac{(1-q)\|u\|^2}{(p-q)\mu B(u)}\right]^{\frac{1}{2p-2}}	\geq\left[\frac{(1-q)\|u\|^2}{(p-q)\mu C_2\|u\|^{2p}}\right]^{\frac{1}{2p-2}}=K_{p,q,\mu}\frac{1}{\|u\|} \nonumber
		\end{eqnarray} where $K_{p,q,\mu}=\left[(1-q)/[(p-q)\mu C_2]\right]^{\frac{1}{2p-2}}$. The last estimate implies that $\|u\|\geq K_{p,q,\mu}>0.$

		Now, let us assume that $u\in\mathcal{N}^0$. Hence, $t_n(u)=1$ and $R^\prime_n(u)u=0$. Thus, $u$ satisfies $\lambda^*=\Lambda_n(u)=R_n(t_n(u)u)=R_n(u)$. Hence, using the same ideas given just above, we obtain that
		$$1=t_n(u)=\left[\frac{(1-q)\|u\|^2}{(p-q)\mu B(u)}\right]^{\frac{1}{2p-2}}\geq K_{p,q,\mu}\frac{1}{\|u\|}.$$
		Once again, $\|u\|\geq C$ holds true for any $u\in\mathcal{N}^-\cup\mathcal{N}^0$. Now, taking into account the strong convergence and the last statement, we infer that $\mathcal{N}^-$ and $\mathcal{N}^0$ are closed sets for each $\lambda\in(0,\lambda^*]$. This concludes the proof.
	\end{proof}
	
	At this stage, we shall borrow some ideas discussed in \cite{sun}. Here we shall prove the existence of continuous curves in the Nehari sets $\mathcal{N}^+$ and $\mathcal{N}^-$. Namely, we shall prove the following important result:

	\begin{lem}\label{curva}
		Suppose $(h_1)-(h_3)$ holds. Let $u\in\mathcal{N}^-$ be a fixed function.  Then there exists $\epsilon>0$ and a continuous function $f: B_{\epsilon}(0) \subset X \to \mathbb{R}$ satisfying
		$$f(0)=1\quad\text{and}\quad f(w)(u+w)\in\mathcal{N}^-\quad\text{for all }w\in X \ \text{such that }\|w\|<\epsilon.$$
	\end{lem}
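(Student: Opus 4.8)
The plan is to apply a version of the implicit function theorem adapted to the non-smooth setting, in which differentiation is performed only in a scalar fiber variable. Define $F:(0,+\infty)\times X\to\mathbb{R}$ by
$$F(t,w)=J^\prime\big(t(u+w)\big)\big[t(u+w)\big]=t^2\|u+w\|^2-\lambda t^{2q}A(u+w)-\mu t^{2p}B(u+w).$$
Since $u\in\mathcal{N}^-\subset\mathcal{N}$ we have $F(1,0)=J^\prime(u)u=0$, which is the normalization $f(0)=1$ we are after. Because $A,B\in C^0(X,\mathbb{R})$ (Proposition \ref{hl} together with the Dominated Convergence Theorem, as in Lemma \ref{lema2.1}) and $t\mapsto t^r$ is continuous on $(0,+\infty)$, both $F$ and $\partial_tF(t,w)=2t\|u+w\|^2-2q\lambda t^{2q-1}A(u+w)-2p\mu t^{2p-1}B(u+w)$ are jointly continuous on $(0,+\infty)\times X$. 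Note carefully that we only ever differentiate in $t$, where the map is smooth, and never in $w$, where the singular term $A$ (hence $J$) fails to be differentiable. Finally, using $u\in\mathcal{N}^-$,
$$\partial_tF(1,0)=2\|u\|^2-2q\lambda A(u)-2p\mu B(u)=J^{\prime\prime}(u)(u,u)<0.$$

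With $F(1,0)=0$, $\partial_tF(1,0)<0$, and $F,\partial_tF$ continuous, I would run the standard monotonicity/intermediate-value substitute for the IFT. First choose $\delta\in(0,1)$ and $\epsilon_0>0$ so that $\partial_tF(t,w)<0$ for all $(t,w)$ with $t\in[1-\delta,1+\delta]$ and $\|w\|<\epsilon_0$ (possible by continuity of $\partial_tF$ and compactness of $[1-\delta,1+\delta]$). Then $t\mapsto F(t,w)$ is strictly decreasing on $[1-\delta,1+\delta]$ for each such $w$. Since $F(1-\delta,0)>0>F(1+\delta,0)$, continuity of $w\mapsto F(1\mp\delta,w)$ lets me shrink to $\epsilon\in(0,\epsilon_0)$ with $F(1-\delta,w)>0>F(1+\delta,w)$ whenever $\|w\|<\epsilon$. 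The intermediate value theorem together with strict monotonicity then yields a unique $f(w)\in(1-\delta,1+\delta)$ solving $F(f(w),w)=0$; uniqueness forces $f(0)=1$, and a routine subsequence argument (boundedness of $\{f(w)\}$ in $[1-\delta,1+\delta]$, continuity of $F$, uniqueness) shows $f:B_\epsilon(0)\to\mathbb{R}$ is continuous.

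It remains to verify the membership claims. Shrinking $\epsilon$ once more if necessary so that $\|u+w\|\geq\|u\|-\epsilon>0$ (here one may invoke the a priori bound $\|u\|\geq C$ from Lemma \ref{3.2}), we get $u+w\neq0$, and since $f(w)>1-\delta>0$ the function $f(w)(u+w)$ is nonzero; combined with $F(f(w),w)=J^\prime(f(w)(u+w))[f(w)(u+w)]=0$ this gives $f(w)(u+w)\in\mathcal{N}$. To upgrade to $\mathcal{N}^-$, set $v_w=f(w)(u+w)$ and observe that $F(t,w)=J^\prime\big(\tfrac{t}{f(w)}v_w\big)\big[\tfrac{t}{f(w)}v_w\big]$, whence
$$\partial_tF(f(w),w)=\frac{1}{f(w)}\,\frac{d}{ds}\Big|_{s=1}J^\prime(sv_w)(sv_w)=\frac{1}{f(w)}\,J^{\prime\prime}(v_w)(v_w,v_w).$$
Since $\partial_tF(f(w),w)<0$ and $f(w)>0$, we conclude $J^{\prime\prime}(v_w)(v_w,v_w)<0$, i.e. $f(w)(u+w)\in\mathcal{N}^-$. (Equivalently, one could phrase the last step through $R_n$ and Proposition \ref{2.4}.)

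The main obstacle is exactly the non-differentiability of $J$ coming from the singular Choquard term $A$: the classical $C^1$ implicit function theorem is not available, so the argument must be organized so that differentiation occurs only in the scalar direction $t$, where the fibering map is real-analytic, while in the Banach variable $w$ only the continuity of $A$ and $B$ is exploited. A secondary, purely technical point is to ensure that the nonvanishing requirements ($u+w\neq0$, $f(w)>0$, so that $f(w)(u+w)$ genuinely lies in $\mathcal{N}$ and the splitting into $\mathcal{N}^\pm$ makes sense) persist for $\|w\|$ small, which follows from continuity and the lower bound $\|u\|\geq C$ of Lemma \ref{3.2}.
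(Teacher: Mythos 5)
Your proposal is correct and follows essentially the same route as the paper: define the fibering equation $F(t,w)=0$ (yours is the paper's $F$ multiplied by $t^{2q}$, which changes nothing for $t>0$), observe $F(1,0)=0$ and $\partial_tF(1,0)=J''(u)(u,u)<0$ since $u\in\mathcal{N}^-$, solve for $t=f(w)$ near $(1,0)$, and read off membership in $\mathcal{N}^-$ from the sign of $\partial_tF(f(w),w)$. The one genuine refinement is that you replace the paper's citation of the classical implicit function theorem (which strictly requires joint $C^1$ regularity that the singular term $A(u+w)$ does not provide in $w$) by an explicit monotonicity-plus-intermediate-value argument using only continuity of $F$ and $\partial_tF$; this is a more careful justification of the same step.
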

	\begin{proof}
		Let us define the function $F:\mathbb{R}\times X\rightarrow\mathbb{R}$ as follows:
		$$F(t,w)=t^{2-2q}\|u+w\|^2-\lambda A(u+w)-\mu t^{2p-2q}B(u+w).$$
		Let $u\in\mathcal{N}^-$ be a fixed function. As a consequence, we obtain
		\begin{equation}\label{N-}
			\|u\|^2-\lambda A(u)-\mu B(u)=0.
		\end{equation} 
		Notice that $F(1,0)=0$. Furthermore, we mention that 
		\begin{equation*}
			\frac{d}{dt} F(t,w)=(2-2q)t^{1-2q}\|u+w\|^2-(2p-2q)\mu t^{2p-2q-1}B(u+w).
		\end{equation*} 
		Thus, we obtain that 
		\begin{equation*}
			\frac{d}{dt}F(1,0)=(2-2q)\|u\|^2-(2p-2q)B(u).
		\end{equation*}
		Notice also that $J^{\prime\prime}(u)(u,u)=2\|u\|^2-\lambda 2q A(u)-\mu 2p B(u)<0$. Therefore, by using the estimates $\eqref{N-}$, we infer that
		$$J^{\prime\prime}(u)(u,u)=(2-2q)\|u\|^2-(2p-2q)\mu B(u)<0.$$
		It follows that $\frac{d}{dt}F(1,0)<0$. Hence, by using the Implicit Function Theorem \cite{drabek}, there exists $\overline{\epsilon}>0$ and a unique function $f:B_{\overline{\epsilon}}(0)\subset X\rightarrow V_{\overline{\epsilon}}\subset\mathbb{R}$ such that
		$$f(w)>0 \text{ and } f(0)=1\quad F(f(w),w)=0 \quad \text{for all }w\in B_{\overline{\epsilon}}(0).$$
		In particular, $f(w)(u+w)\in\mathcal{N}$. Furthermore, assuming that $\epsilon<\overline{\epsilon}$, we also obtain that
		$$\frac{d}{dt}F(f(w),w)<0\quad\text{for }\|w\|<\epsilon.$$
		It is not hard to see that 
		$$\frac{d}{dt}F(f(w),w)<0\quad\text{if, and only if}\quad J^{\prime\prime}(f(w)(u+w))(f(w)(u+w),f(w)(u+w))<0.$$
		Thus, $f(w)(u+w)\in\mathcal{N}^-$. This concludes the proof.
	\end{proof}
	Now, we can prove the existence of a curve in $\mathcal{N}^+$ in a similar way. Namely, we arrive at the following result:
	\begin{lem}\label{curva2}
		Suppose $(h_1)-(h_3)$ holds. Let $v\in\mathcal{N}^+$ be a fixed function. Then there exists $\epsilon>0$ and a continuous function $h:B_{\epsilon}(0) \subset X \to \mathbb{R}$ satisfying
		$$h(0)=1\quad\text{and}\quad h(w)(v+w)\in\mathcal{N}^+\quad\text{for all}\ w\in X \ \text{such that }\|w\|<\epsilon.$$
	\end{lem}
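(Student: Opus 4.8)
The plan is to reproduce, almost verbatim, the argument used for Lemma~\ref{curva}, merely swapping the strict inequality that defines $\mathcal{N}^-$ for the one that defines $\mathcal{N}^+$ and keeping track of the change of sign. First I would introduce the auxiliary function $F:\mathbb{R}\times X\rightarrow\mathbb{R}$ given by
$$F(t,w)=t^{2-2q}\|v+w\|^2-\lambda A(v+w)-\mu t^{2p-2q}B(v+w).$$
Since $v\in\mathcal{N}^+\subset\mathcal{N}$, we have $\|v\|^2-\lambda A(v)-\mu B(v)=0$, hence $F(1,0)=0$.

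Next I would differentiate in $t$, obtaining
$$\frac{d}{dt}F(t,w)=(2-2q)t^{1-2q}\|v+w\|^2-(2p-2q)\mu t^{2p-2q-1}B(v+w),$$
so that $\frac{d}{dt}F(1,0)=(2-2q)\|v\|^2-(2p-2q)\mu B(v)$. Using the Nehari identity for $v$, this quantity coincides with $J^{\prime\prime}(v)(v,v)$; since $v\in\mathcal{N}^+$ we get $\frac{d}{dt}F(1,0)=J^{\prime\prime}(v)(v,v)>0$, and in particular it is nonzero. The Implicit Function Theorem \cite{drabek} then yields $\overline{\epsilon}>0$ and a unique function $h:B_{\overline{\epsilon}}(0)\subset X\rightarrow\mathbb{R}$ with $h(0)=1$, $h(w)>0$ and $F(h(w),w)=0$ for every $\|w\|<\overline{\epsilon}$; this gives $h(w)(v+w)\in\mathcal{N}$.

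Finally, by continuity of $(t,w)\mapsto\frac{d}{dt}F(t,w)$ and of $h$, shrinking $\overline{\epsilon}$ to some $\epsilon>0$ I would ensure that $\frac{d}{dt}F(h(w),w)>0$ for all $\|w\|<\epsilon$. Reasoning as in Lemma~\ref{curva}, one checks that $\frac{d}{dt}F(h(w),w)>0$ is equivalent to
$$J^{\prime\prime}(h(w)(v+w))(h(w)(v+w),h(w)(v+w))>0,$$
so that $h(w)(v+w)\in\mathcal{N}^+$, which completes the proof. I do not expect any serious obstacle here: the only point requiring care is the sign bookkeeping, namely making sure that the strict inequality $\frac{d}{dt}F(1,0)>0$ persists on a possibly smaller ball and that it translates into the condition $J^{\prime\prime}>0$ defining $\mathcal{N}^+$ rather than into membership in $\mathcal{N}^-$ or $\mathcal{N}^0$.
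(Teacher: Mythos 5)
Your proposal is correct and follows exactly the route the paper intends: the paper omits the proof of this lemma, stating only that it is obtained "in a similar way" from Lemma \ref{curva}, and your argument is precisely that adaptation, with the sign of $\frac{d}{dt}F(1,0)=J^{\prime\prime}(v)(v,v)$ flipped to positive and the identification $t^{2q+1}\frac{d}{dt}F(t,w)=J^{\prime\prime}(t(v+w))(t(v+w),t(v+w))$ on $\mathcal{N}$ handled correctly.
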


	In the sequel we shall prove that any minimizing sequence in $\mathcal{N}^-$ strong converges in $X$. More specifically, we can prove the following result:
	
	\begin{lem}\label{2.5}
		Suppose $(h_1)-(h_3)$ holds. Also assume that $\lambda\in (0,\lambda^*)$. Define $(v_k)_{k\in\mathbb{N}}\subset \mathcal{N}^-$ as a minimizing sequence. Then there exists $v\in X$ such that, up to a subsequence, $v_k\rightarrow v$ in $X$ where $v\in\mathcal{N}^-$. Moreover, we obtain that $c_{\mathcal{N}^-}=J(v)$.
	\end{lem}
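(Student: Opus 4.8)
The plan is to run the standard direct-method argument, but with the extra care needed because $J$ is only $C^0$. First I would use Lemma~\ref{coercive}: since $(v_k)_{k\in\mathbb{N}}\subset\mathcal{N}^-\subset\mathcal{N}$ is a minimizing sequence for the bounded-below functional $J$, coercivity forces $(v_k)$ to be bounded in $X$. Hence, up to a subsequence, $v_k\rightharpoonup v$ in $X$, and by the compact embedding $X\hookrightarrow L^r(\mathbb{R}^N)$ for $r\in[2,2^*)$ together with $v_k\to v$ a.e.\ and domination $|v_k|\le h_r\in L^r(\mathbb{R}^N)$, the Dominated Convergence Theorem (applied exactly as in Lemma~\ref{lema2.1} and Proposition~\ref{hl}) gives $A(v_k)\to A(v)$ and $B(v_k)\to B(v)$.

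Next I would show $v\neq 0$. By Lemma~\ref{3.2} there is a constant $C>0$, independent of $\lambda$, with $\|v_k\|\ge C$ for every $k$; consequently, since $v_k\in\mathcal{N}^-$ we have $\|v_k\|^2=\lambda A(v_k)+\mu B(v_k)$, and the right-hand side stays bounded away from zero, so $A(v_k)+B(v_k)\ge c_0>0$. Passing to the limit, $A(v)+B(v)>0$, hence $v\neq 0$. This is the step that uses the lower bound on the $\mathcal{N}^-$-norm crucially; without it $v$ could be zero and the convolution terms would vanish in the limit.

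Then I would upgrade weak convergence to strong convergence. Suppose, for contradiction, that $v_k\not\to v$ in $X$, so $\|v\|^2<\liminf_k\|v_k\|^2$. Since $v\neq 0$, the fibering analysis of Proposition~\ref{2.3} applies to $v$: there is $t^{n,-}(v)>0$ with $t^{n,-}(v)v\in\mathcal{N}^-$ and $J(t^{n,-}(v)v)=\max_{t\ge t_n(v)}J(tv)$ — more precisely $t^{n,-}(v)v$ realises the maximum of $Q_e$-type fibering over the relevant range. The idea is: because $A(v_k)\to A(v)$, $B(v_k)\to B(v)$ but $\|v_k\|^2$ has strictly larger limit, one gets
\begin{equation*}
J(t^{n,-}(v)v)<\liminf_{k\to\infty} J(t^{n,-}(v)\,v_k)\le \liminf_{k\to\infty}\max_{t>0}J(t v_k)=\liminf_{k\to\infty}J(v_k)=c_{\mathcal{N}^-},
\end{equation*}
where the middle inequality uses that $v_k\in\mathcal{N}^-$ means $v_k=t^{n,-}(v_k)v_k$ maximises $t\mapsto J(tv_k)$ on $(t_n(v_k),\infty)$, hence $J(tv_k)\le J(v_k)$ for all such $t$, and one checks $t^{n,-}(v)$ lies in the correct range for $v_k$ (using continuity of $t^{n,-}$ from Proposition~\ref{2.3} and the convergence of $A,B,\|\cdot\|$). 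This contradicts $t^{n,-}(v)v\in\mathcal{N}^-$ and the definition of $c_{\mathcal{N}^-}$. Therefore $v_k\to v$ strongly in $X$.

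Finally, strong convergence gives $\|v_k\|\to\|v\|$, so passing to the limit in $\|v_k\|^2-\lambda A(v_k)-\mu B(v_k)=0$ and in $J^{\prime\prime}(v_k)(v_k,v_k)<0$ yields $v\in\mathcal{N}$ with $J^{\prime\prime}(v)(v,v)\le 0$; since $\mathcal{N}^0=\emptyset$ for $\lambda\in(0,\lambda^*)$ by Proposition~\ref{2.55}, in fact $J^{\prime\prime}(v)(v,v)<0$, i.e.\ $v\in\mathcal{N}^-$. Continuity of $J$ then gives $J(v)=\lim_k J(v_k)=c_{\mathcal{N}^-}$, so the infimum is attained at $v$. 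The main obstacle I expect is the strong-convergence step: making the fibering comparison rigorous requires knowing that $t^{n,-}(v)$ falls in the interval where $t\mapsto J(tv_k)$ is already past its local maximum, which is where the continuity of $u\mapsto t^{n,-}(u)$ and the convergences $A(v_k)\to A(v)$, $B(v_k)\to B(v)$, $\|v_k\|$ bounded must be combined carefully; the strict inequality $\lambda<\lambda^*$ is what keeps $t^{n,-}$ well-defined and bounded throughout.
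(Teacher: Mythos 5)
Your proposal is correct and follows essentially the same route as the paper: coercivity gives boundedness, the compact embedding gives $A(v_k)\to A(v)$ and $B(v_k)\to B(v)$, Lemma~\ref{3.2} rules out $v=0$ via the Nehari identity, strong convergence is obtained by the same fibering-map contradiction comparing $J(t^{n,-}(v)v)$ with $J(v_k)$, and $\mathcal{N}^0=\emptyset$ closes the argument. The only point worth tightening is how you place $t^{n,-}(v)$ in the interval $(t^{n,+}(v_k),t^{n,-}(v_k))$: the paper derives this from weak lower semicontinuity of $u\mapsto J^{\prime}(u)u$ (yielding $J^{\prime}(t^{n,-}(v)v_k)v_k>0$ for large $k$), rather than from continuity of $u\mapsto t^{n,-}(u)$, which is only guaranteed along strongly convergent arguments.
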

	\begin{proof} Initially, we consider a minimizing sequence $(v_k)_{k\in\mathbb{N}}\subset\mathcal{N^-}$. It is important observe that $(v_k)_{k\in\mathbb{N}}$ is bounded. Here was used the fact that $J$ is coercive over the Nehari set $\mathcal{N}$. Therefore, up to a subsequence, we obtain that $v_k\rightharpoonup v$ for some $v \in X$. Hence,  by using the compact embedding $X\hookrightarrow L^r(\mathbb{R}^N)$, $r\in[2,2^*)$, we infer that $v_k\rightarrow v$ in $ L^r(\mathbb{R}^N)$, $v_k\rightarrow v$ a.e in $\mathbb{R}^N$. Recall also that $|v_k|\leq h_r$ where $h_r\in L^r(\mathbb{R}^N)$. Under these conditions, by using the Dominated Convergence Theorem, we know that $A(v_k)\rightarrow A(v)$ and $B(v_k)\rightarrow B(v)$. Now we claim that $v\neq 0$. The proof of this assertion follows arguing by contradiction. Let us assume that $v \equiv 0$. Hence, by using Lemma \ref{3.2} and the compact embedding listed just above, we obtain that
		$$0<C\leq\|v_k\|^2=\lambda A(v_k)+\mu B(v_k)\rightarrow 0$$
		as $k\rightarrow+\infty$. This is a contradiction proving that $v\not=0$. Now, by using Proposition \ref{2.3}, the fibering map $\phi(t)=J(tv)$ with $t\geq0$ admits a unique critical point $t^{n,-}(v)>0$ such that $t^{n,-}(v)v\in\mathcal{N}^-$. It remains to prove that $v_k \to v$ in $X$. The proof follows by contradiction. Let us assume that $v_k\not\rightarrow v$. In particular, we obtain $\|v\|<\liminf_{k\rightarrow+\infty}\|v_k\|.$
		Notice also that $v_k\in\mathcal{N}^-$ implies that
		$$J(v_k)\geq J(s v_k)\quad \text{for all}\ s\geq t^{n,+}(v_k).$$
		Now, we claim that $t^{n,-}(v)>t^{n,+}(v)$. The proof for this claim comes from the fact that $v\mapsto J^\prime(v)v$ is weakly lower semicontinuous. Hence, we obtain that 
		$$0=J^\prime(t^{n,-}(v)v)v<\liminf_{k\rightarrow+\infty}J^\prime(t^{n,-}(v)v_k)v_k,$$
		As a consequence, we know that $J^\prime(t^{n,-}(v)v_k)v_k>0$ for each $k$ large enough. It follows that $t^{n,-}(v)\in(t^{n,+}(v_k),t^{n,-}(v_k))$. Here was used the fact that $t \mapsto J(tv_k)$ is increasing on the interval $(t^{n,+}(v_k),t^{n,-}(v_k))$, see Figure \ref{figure1}. Now, by using the last inequality, we can deduce that
		$$c_{\mathcal{N}^-}\leq J(t^{n,-}(v)v)<\liminf_{k\rightarrow +\infty} J(t^{n,-}(v_k)v_k)\leq \liminf_{k\rightarrow+\infty} J(v_k)=c_{\mathcal{N}^-}.$$
		This is a contradiction proving that $v_k\rightarrow v$ in $X$. Under these conditions, by using  the strong convergence in $X$, we deduce that $\|v\|\geq C>0$, $J^\prime(v)v=0$, and $J^{\prime\prime}(v)(v,v) < 0$. Here was used also the fact that $\mathcal{N}^0$ is empty. Hence, we deduce that 
		$c_{\mathcal{N}^-}=\lim_{k\rightarrow+\infty}J(v_k)=J(v).$
		This ends the proof.
	\end{proof}
	\begin{prop}\label{weak1}
		Suppose $(h_1)-(h_3)$ holds. Assume also that $\lambda\in(0,\lambda^*)$. Then there exists a function $w\in\mathcal{N}^-$ obtained in the same way of Lemma \ref{2.5} such that $c_{\mathcal{N}^-}=J(w)$ and $w$ is a weak nontrivial solution for the problem \eqref{p1}.
	\end{prop}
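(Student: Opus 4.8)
The plan is to take the function $w\in\mathcal{N}^-$ furnished by Lemma~\ref{2.5} — the strong $X$-limit of a minimizing sequence for $c_{\mathcal{N}^-}$, with $J(w)=c_{\mathcal{N}^-}$ and $\|w\|\ge C>0$ by Lemma~\ref{3.2} — and to prove that $w$ is a weak solution of \eqref{p1}. Since $A$ and $B$ depend only on $|w|$ and $J(s|u|)\le J(su)$ for all $s>0$, we may assume (replacing $w$ by the point of $\mathcal{N}^-$ on the ray through $|w|$, which by a direct comparison of fibering maps and Proposition~\ref{2.3} is still a minimizer) that $w\ge 0$. The genuine difficulty is that, because $0<q<1$, the functional $J$ — precisely the term $A$ — is not differentiable, so the Euler--Lagrange equation cannot be extracted from a Lagrange multiplier on $\mathcal{N}^-$. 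I would instead first establish a one‑sided variational inequality and then upgrade it to the equation by a truncation argument, following \cite{sun}.

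\emph{Step 1 (variational inequality).} Fix $\psi\in X$ with $\psi\ge 0$. For $t>0$ small, Lemma~\ref{curva} (or the continuity of $u\mapsto t^{n,-}(u)$ in Proposition~\ref{2.3}) provides $\gamma(t):=t^{n,-}(w+t\psi)$ with $\gamma(t)(w+t\psi)\in\mathcal{N}^-$ and $\gamma(t)\to t^{n,-}(w)=1$ as $t\to 0^+$. By minimality of $w$ over $\mathcal{N}^-$,
\[
0\le\frac{J(\gamma(t)(w+t\psi))-J(w)}{t}=\frac{J(\gamma(t)(w+t\psi))-J(\gamma(t)w)}{t}+\frac{J(\gamma(t)w)-J(w)}{t};
\]
since $s\mapsto J(sw)$ has a local maximum at $s=t^{n,-}(w)=1$ and $\gamma(t)\to 1$, the last quotient is $\le 0$ for $t$ small, hence the first quotient is $\ge 0$. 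Using the homogeneities $\|sv\|^2=s^2\|v\|^2$, $A(sv)=s^{2q}A(v)$, $B(sv)=s^{2p}B(v)$, the $C^1$ regularity of $u\mapsto\frac12\|u\|^2$ and of $B$, and $\gamma(t)\to 1$, $w+t\psi\to w$ in $X$, the only delicate limit is that of $A$: because $A(w+t\psi)\ge A(w)$ (here $w,\psi\ge 0$ is essential), Fatou's Lemma gives
\[
\liminf_{t\to 0^+}\frac{A(w+t\psi)-A(w)}{t}\ \ge\ 2q\int_{\mathbb{R}^N}(I_{\alpha_1}*aw^q)\,a\,w^{q-1}\psi\,dx .
\]
Passing to the limit one obtains
\[
\langle w,\psi\rangle-\lambda\int_{\mathbb{R}^N}(I_{\alpha_1}*aw^q)\,a\,w^{q-1}\psi\,dx-\mu\int_{\mathbb{R}^N}(I_{\alpha_2}*w^p)\,w^{p-1}\psi\,dx\ \ge\ 0 .
\]
As the left-hand side is finite, so is the singular integral, for every $\psi\in X$ with $\psi\ge 0$; taking $\psi$ strictly positive this forces $|\{w=0\}|=0$, i.e.\ $w>0$ a.e., so the weak formulation is well posed.

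\emph{Step 2 (from inequality to equation).} Let $\varphi\in X$ and $\epsilon>0$; apply the inequality of Step~1 to the admissible function $\psi=(w+\epsilon\varphi)^+\ge 0$ and set $\Omega_\epsilon:=\{w+\epsilon\varphi\le 0\}$. Since $w\ge 0$, $\Omega_\epsilon\subseteq\{0\le w\le\epsilon|\varphi|\}$, which shrinks to a subset of $\{w=0\}$ as $\epsilon\downarrow 0$; as $\nabla w=0$ a.e.\ on $\{w=0\}$, dominated convergence gives $\delta_\epsilon:=\big(\int_{\Omega_\epsilon}(|\nabla w|^2+V(x)w^2)\,dx\big)^{1/2}\|\varphi\|\to 0$. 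Writing $(w+\epsilon\varphi)^+=(w+\epsilon\varphi)\chi_{\mathbb{R}^N\setminus\Omega_\epsilon}$, using $w\in\mathcal{N}$ (so that $\|w\|^2=\lambda A(w)+\mu B(w)$) together with $\int_{\mathbb{R}^N}(I_{\alpha_1}*aw^q)\,a\,w^{q-1}w\,dx=A(w)$ and $\int_{\mathbb{R}^N}(I_{\alpha_2}*w^p)\,w^{p-1}w\,dx=B(w)$, the inequality of Step~1 reduces to
\begin{align*}
\epsilon\,\Theta(\varphi)\ \ge\ & \int_{\Omega_\epsilon}\big(|\nabla w|^2+Vw^2\big)\,dx+\epsilon\int_{\Omega_\epsilon}\big(\nabla w\cdot\nabla\varphi+Vw\varphi\big)\,dx\\
& -\lambda\int_{\Omega_\epsilon}(I_{\alpha_1}*aw^q)\,a\,w^{q-1}(w+\epsilon\varphi)\,dx-\mu\int_{\Omega_\epsilon}(I_{\alpha_2}*w^p)\,w^{p-1}(w+\epsilon\varphi)\,dx,
\end{align*}
where $\Theta(\varphi):=\langle w,\varphi\rangle-\lambda\int_{\mathbb{R}^N}(I_{\alpha_1}*aw^q)\,a\,w^{q-1}\varphi\,dx-\mu\int_{\mathbb{R}^N}(I_{\alpha_2}*w^p)\,w^{p-1}\varphi\,dx$. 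On $\Omega_\epsilon$ one has $w+\epsilon\varphi\le 0$ with nonnegative nonlocal kernels, so the last two integrals on the right are $\ge 0$; also $\int_{\Omega_\epsilon}(|\nabla w|^2+Vw^2)\ge 0$ and, by Cauchy--Schwarz, $\epsilon\big|\int_{\Omega_\epsilon}(\nabla w\cdot\nabla\varphi+Vw\varphi)\big|\le\epsilon\,\delta_\epsilon$. Hence $\epsilon\,\Theta(\varphi)\ge -\,\epsilon\,\delta_\epsilon$; dividing by $\epsilon$ and letting $\epsilon\to 0^+$ yields $\Theta(\varphi)\ge 0$ for every $\varphi\in X$, and replacing $\varphi$ by $-\varphi$ gives $\Theta(\varphi)\le 0$. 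Therefore $\Theta(\varphi)=0$ for all $\varphi\in X$, i.e.\ $w$ is a weak solution of \eqref{p1}; it is nontrivial because $\|w\|\ge C>0$, and $c_{\mathcal{N}^-}=J(w)$ by construction.

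I expect the passage to the limit $t\to 0^+$ in Step~1, for the non-differentiable functional $A$, to be the main obstacle: it is precisely there that the sign condition $w,\psi\ge 0$ and Fatou's Lemma are indispensable, and they simultaneously deliver the finiteness of the singular integral (hence $w>0$ a.e.), which is what legitimises the truncation in Step~2. The remaining manipulations are routine.
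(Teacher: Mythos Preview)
Your proof is correct and reaches the same variational inequality $\langle w,\psi\rangle-\lambda A'(w)\psi-\mu B'(w)\psi\ge 0$ for $\psi\ge 0$, followed by the same truncation argument $\psi=(w+\epsilon\varphi)^+$, as in the paper. The genuine difference lies in how you get to that inequality. The paper invokes Ekeland's Variational Principle on the closed set $\mathcal{N}^-$ to produce a minimizing sequence $(w_k)$ satisfying $J(v)\ge J(w_k)-\tfrac{1}{k}\|v-w_k\|$, inserts the curve $f_k(t)(w_k+t\varphi)\in\mathcal{N}^-$ from Lemma~\ref{curva} into this inequality, and then spends most of the argument showing that the one-sided derivatives $f_k'(0)$ exist and are uniformly bounded; only after this can the variational inequality be extracted at level $k$ and passed to the limit $k\to\infty$. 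You bypass Ekeland entirely by working directly with the minimizer $w$ furnished by Lemma~\ref{2.5} and by exploiting the key structural fact that $s=1=t^{n,-}(w)$ is a local maximum of $s\mapsto J(sw)$: your decomposition
\[
0\le \frac{J(\gamma(t)(w+t\psi))-J(\gamma(t)w)}{t}+\frac{J(\gamma(t)w)-J(w)}{t},
\]
together with $J(\gamma(t)w)\le J(w)$, immediately isolates the first quotient as $\ge 0$, and the limit $t\to 0^+$ then needs only Fatou for the $A$-term (exactly as in the paper's \eqref{fatou}). This is shorter and avoids the technical boundedness-of-$f_k'(0)$ step; the paper's route, on the other hand, is closer to the template of \cite{sun} and would still work even without prior knowledge that the infimum is attained. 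The final Step~2 is essentially identical in both proofs; your estimate of the $\Omega_\epsilon$-remainder via Cauchy--Schwarz and $\delta_\epsilon\to 0$ is a slight tightening of the paper's Dominated-Convergence argument in \eqref{2.23}.
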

	\begin{proof}
		According to  Lemmas \ref{coercive} and \ref{2.5} we obtain a function $v\in\mathcal{N}^-$ such that $c_{\mathcal{N}^-}=J(v)$. Recall that $\mathcal{N}^-$ is closed for $\lambda\in(0,\lambda^*)$. Hence, we can apply the Ekeland's Variational Principle \cite{drabek}, we obtain the existence of a minimizing sequence $(w_k)_{k\in\mathbb{N}}\subset\mathcal{N}^-$ such that, up to a subsequence, $w_k\rightarrow w$ where $w\in\mathcal{N}^-$ satisfies $J(w)=c_{\mathcal{N}^-}$. Furthemore,  we obtain the following inequality
		\begin{equation}\label{i}
			J(v)\geq J(w_k)-\frac{1}{k}\|v-w_k\| \text{ for all } v\in\mathcal{N}^-.	
		\end{equation}
		Let us define the function $f_k(t)=f_k(t\varphi)$ where $f_k(t)(w_k+t\varphi)\in \mathcal{N^-}$ is the function obtained in Lemma \ref{curva}. It is important to mention that the right derivative of zero of this function denoted by $f^\prime_{k}(0)$ is bounded by a positive constant. More specifically, we have that $f^{\prime}_{k}(0)$ exists and it satisfies $|f^{\prime}_{k}(0)|\leq C$ for some constant $C > 0$. Firstly, we shall verify that $A(w_k+t\varphi)-A(w_k) \geq 0$ holds for each $\varphi\geq 0$. In fact, for each $\varphi\geq 0$, we have that
		$$A(w_k+t\varphi)-A(w_k)=\int_{\mathbb{R}^N}(I_{\alpha_1}*a|w_k+t\varphi|^q)a(x)|w_k+t\varphi|^q-(I_{\alpha_1}*a|w_k|^q)a(x)|w_k|^q dx.$$ It is not hard to see that
		\begin{eqnarray}\label{2.14}
			A(w_k+t\varphi)-A(w_k)&=&\int_{\mathbb{R}^N}(I_{\alpha_1}*a|w_k+t\varphi|^q)a(x)[|w_k+t\varphi|^q-|w_k|^q]dx \nonumber \\
			&+&\int_{\mathbb{R}^N}(I_{\alpha_1}*a[|w_k+t\varphi|^q-|w_k|^q])a(x)|w_k|^q dx\geq 0.
		\end{eqnarray}
		Let us assume by contradiction that $f^\prime_{k}(0)$ does not exist. Consider $t_{\theta}\rightarrow 0$ where $t_{\theta}>0$ in such way that $f_k$ satisfies
		$$f^\prime_{k}(0):=\lim_{\theta\rightarrow+\infty}\frac{f_n(t_{\theta})-1}{t_{\theta}}\quad\text{where}\quad f^\prime_{k}(0)\in[-\infty,+\infty].$$
		Notice also that $f_k(0)=1$ and $(w_k)_{k\in\mathbb{N}}\subset\mathcal{N}^-$. Hence, we have that $f_k(t)(w_k+t\varphi)\in\mathcal{N}^-$. As a consequence, we deduce the following identities
		\begin{eqnarray}
			\|w_k\|^2-\lambda A(w_k)-\mu B(w_k)=0=f^2_k(t)\|w_k+t\varphi\|^2-\lambda f^{2q}_k(t)A(w_k+t\varphi)-\mu f^{2p}_k(t) B(w_k+t\varphi).
		\end{eqnarray}
		Therefore, we obtain
		\begin{eqnarray}
			0&=&[f^2_k(t)-1]\|w_k+t\varphi\|^2-\lambda[f^{2q}_k(t)-1] A(w_k+t\varphi)-\mu[f^{2p}_k(t)-1]B(w_k+t\varphi)\nonumber \\
			&+&(\|w_k+t\varphi\|^2-\|w_k\|^2)-\lambda[A(w_k+t\varphi)-A(w_k)]-\mu[B(w_k+t\varphi)-B(w_k)]. \nonumber
		\end{eqnarray}
		Now, by using (\ref{2.14}), we infer that
		\begin{eqnarray}
			0&\leq&[f^2_k(t)-1]\|w_k+t\varphi\|^2-\lambda[f^{2q}_k(t)-1]A(w_k+t\varphi)-\mu[f^{2p}_k(t)-1]B(w_k+t\varphi) \nonumber \\
			&+&(\|w_k+t\varphi\|^2-\|w_k\|^2)-\mu[B(w_k+t\varphi)-B(w_k)]. \nonumber
		\end{eqnarray}
		
		Furthermore, dividing both sides by $t$ and taking the limit as $t\rightarrow0$ , we obtain that
		$$0\leq f^\prime_{k}(0)[2\|w_k\|^2-\lambda 2q A(w_k)-\mu 2pB(w_k)]+2\int_{\mathbb{R}^N}\nabla w_k\nabla\varphi+V(x)w_k\varphi dx-\mu 2pB^{\prime}(w_k)\varphi.$$
		Therefore, using that $(w_k)_{k\in\mathbb{N}}\subset\mathcal{N^-}$, we conclude that $f^\prime_{k}(0)\not=+\infty$.
		Now, let us show that $|f^\prime_{k}(0)|<+\infty$. Suppose by contradiction that $f^\prime_{k}(0)=-\infty$. Then, for all small $t>0$ there holds $f_k(t)<1$. Furthermore, we observe that
		\begin{eqnarray*}
			\|f_k(t)(w_k+t\varphi)-w_k\|=\|[f_k(t)-1]w_k+tf_k(t)\varphi\|\leq[1-f_k(t)]\|w_k\|+tf_k(t)\|\varphi\|.
		\end{eqnarray*}
		It follows from \eqref{i} that
		\begin{eqnarray}
			[1-f_k(t)]\frac{\|w_k\|}{k}+tf(t)\frac{\|\varphi\|}{k}&\geq& J(w_k)-J(f_k(t)(w_k+t\varphi)) \nonumber\\
			&=&\frac{1}{2}\|w_k\|^2-\frac{\lambda}{2q}A(w_k)-\frac{\mu}{2p}B(w_k)-\frac{1}{2}f^2_k(t)\|w_k+t\varphi\|^2\nonumber\\
			&+&\frac{\lambda}{2q}f^{2q}_k(t)A(w_k+t\varphi)+\frac{\mu}{2p}f^{2p}_k(t)B(w_k+t\varphi).\nonumber 
		\end{eqnarray}
		On the other hand, taking into account that $w_k$ and $f_k(t)(w_k+t\varphi)$ belong to $\mathcal{N}^-$, we write the last estimate as follows
		\begin{eqnarray}
			[1-f_k(t)]\frac{\|w_k\|}{k}+tf(t)\frac{\|\varphi\|}{k}&\geq&\left(\frac{1}{2}-\frac{1}{2q}\right)\|w_k\|^2-\left(\frac{1}{2p}-\frac{1}{2q}\right)\mu B(w_k)\nonumber \\
			&+&\left(\frac{1}{2q}-\frac{1}{2}\right)f^2_k(t)\|w_k+t\varphi\|^2+\left(\frac{1}{2p}-\frac{1}{2q}\right)f^{2p}_k(t)B(w_k+t\varphi). \nonumber 
		\end{eqnarray}
		Hence, we obtain that
		\begin{eqnarray}
			[1-f_k(t)]\frac{\|w_k\|}{k}+tf(t)\frac{\|\varphi\|}{k}&\geq&\left(\frac{1}{2q}-\frac{1}{2}\right)[\|w_k+t\varphi\|^2-\|w_k\|^2]+\left(\frac{1}{2q}-\frac{1}{2}\right)[f^2_k(t)-1]\|w_k+t\varphi\|^2\nonumber \\
			&+&\left(\frac{1}{2p}-\frac{1}{2q}\right)[f^{2p}_k(t)-1]\mu B(w_k) 
			+\left(\frac{1}{2p}-\frac{1}{2q}\right)\mu f^{2p}_k(t)[B(w_k+t\varphi)-B(w_k)]. \nonumber
		\end{eqnarray}
		Therefore, by dividing both sides by $t$ and taking the limit as $t\rightarrow 0$, we deduce that
		\begin{eqnarray}\label{2.16}
			\frac{\|\varphi\|}{k}&\geq&\frac{1}{q}\left[(1-q)\|w_k\|^2-(p-q)\mu B(w_k)+\frac{q\|w_k\|}{k}\right] f^\prime_{k}(0)\nonumber \\
			&+&\left(\frac{1-q}{q}\right)\int_{\mathbb{R}^N}\nabla w_k\nabla\varphi+V(x)w_k\varphi dx-\left(\frac{p-q}{q}\right)\mu B^\prime(w_k)\varphi.
		\end{eqnarray}
		Recall that $(w_k)_{k\in\mathbb{N}}\subset\mathcal{N}^-$. Therefore,  we have that $(2-2q)\|w_k\|^2-(2p-2q)\mu B(w_k)<0$. Now, by using the fact that $w_k\rightarrow w$ and $c_{\mathcal{N}^-}=J(w)$, there exists $C_4 > 0$ such that $(2-2q)\|w_k\|^2-(2p-2q)\mu B(w_k)< - C_4$. 
		As a consequence, for $k$ sufficiently large, we infer that 
		$$(1-q)\|w_k\|^2-(p-q)\mu B(w_k)+\frac{\|w_k\|q}{k}\leq -C_4+\frac{C_3 q}{k}<0.$$
		Thus, we can write inequality \eqref{2.16} as follows
		\begin{eqnarray*}
			\frac{\frac{\|\varphi\|}{k}-\left(\frac{1-q}{q}\right)\displaystyle\int_{\mathbb{R}^N}\nabla w_k\nabla\varphi+V(x)w_k\varphi dx+\left(\frac{p-q}{q}\right)\mu B^\prime(w_k)\varphi}{\frac{1}{q}\left[(1-q)\|w_k\|^2-(p-q)\mu B(w_k)+\frac{q\|w_k\|}{k}\right]}\leq f^\prime_{k}(0).
		\end{eqnarray*}
		Therefore, we can establish a lower bound for $f^\prime_{k}(0)$. This is a contradiction due to the fact  that $f^\prime_{k}(0)=-\infty$. Hence, $|f^\prime_{k}(0)|<+\infty$ is now satisfied. Based on the estimates given above we obtain that $|f^\prime_{k}(0)|\leq C_5$ is verified for some constant $C_5 > 0$.
		
		In what follows we shall prove that the function $w\in\mathcal{N}^-$ for which $J(w)=c_{\mathcal{N}^-}$ is a weak solution to problem \eqref{p1}. As a first step, by using the inequality \eqref{i}, we can apply the Ekeland’s Variational Principle for $f_k(t)(w_k+t\varphi)$ and $(w_k)_{k\in\mathbb{N}}$ which implies that
		\begin{eqnarray}
			\frac{1}{k}[|f_k(t)-1|\|w_k\|+tf_k(t)\|\varphi\|]&\geq&\frac{1}{k}\|f_k(t)(w_k+t\varphi)-w_k\| \nonumber \\  &\geq& J(w_k)-J(f_k(t)(w_k+t\varphi))=\frac{1}{2}\|w_k\|^2-\frac{\lambda}{2q}A(w_k)-\frac{\mu}{2p}B(w_k)\nonumber \\
			&-&\frac{f^2_k(t)}{2}\|w_k+t\varphi\|^2 +\frac{\lambda f^{2q}_k(t)}{2q}A(w_k+t\varphi)+\frac{\mu f^{2p}_k(t)}{2p}B(w_k+t\varphi). \nonumber
		\end{eqnarray}
		Once more, the last estimate implies also that
		\begin{eqnarray}	
			\frac{1}{k}[|f_k(t)-1|\|w_k\|+tf_k(t)\|\varphi\|]&\geq&-\left[\frac{f^2_k(t)-1}{2}\right]\|w_k\|^2+\lambda\left[\frac{f^{2q}_k(t)-1}{2q}\right]A(w_k+t\varphi)\nonumber \\
			&+&\mu\left[\frac{f^{2p}_k(t)-1}{2p}\right]B(w_k+t\varphi)
			+\frac{f^2_k(t)}{2}(\|w_k\|^2-\|w_k+t\varphi\|^2)\nonumber \\
			&+&\frac{\lambda}{2q}[A(w_k+t\varphi)-A(w_k)]+\frac{\mu}{2p}[B(w_k+t\varphi)-B(w_k)]\nonumber
		\end{eqnarray}
		holds for each $\varphi\geq 0$. Therefore, dividing both sides by $t$ and taking the limit as $t\rightarrow 0$, we deduce that 
		\begin{eqnarray}
			\frac{1}{k}(|f^\prime_{k}(0)|\|w_k\|+\|\varphi\|)&\geq&-f^\prime_{k}(0)[\|w_k\|^2-\lambda A(w_k)-\mu B(w_k)]-\int_{\mathbb{R}^N}\nabla w_k\nabla\varphi+V(x)w_k\varphi dx+\mu B^\prime(w_k)\varphi\nonumber \\ 
			&+&\liminf_{t\rightarrow 0^+}\frac{\lambda}{2q}\left[\frac{A(w_k+t\varphi)-A(w_k)}{t}\right]  \\
			&=&-\int_{\mathbb{R}^N}\nabla w_k\nabla\varphi+V(x)w_k\varphi dx+\mu B^\prime(w_k)\varphi+\liminf_{t\rightarrow 0^+}\frac{\lambda}{2q}\left[\frac{A(w_k+t\varphi)-A(w_k)}{t}\right].\nonumber
		\end{eqnarray}
		According to \eqref{2.14} we observe that $A(w_k+t\varphi)-A(w_k)\geq 0$ holds for each $\varphi \geq 0$. Now, by applying Fatou's Lemma together with Mean Value Theorem, we deduce that
		\begin{eqnarray}\label{fatou}
			\liminf_{t\rightarrow 0^+}\frac{\lambda}{2q}\left[\frac{A(w_k+t\varphi)-A(w_k)}{t}\right]\geq\lambda\int_{\mathbb{R}^N}(I_{\alpha_1}*a|w_k|^q)a(x)|w_k|^{q-2}w_k\varphi dx=\lambda A^\prime(w_k)\varphi.
		\end{eqnarray}
		It follows also that
		$$\frac{1}{k}(|f^\prime_{k}(0)|\|w_k\|+\|\varphi\|)+\int_{\mathbb{R}^N}\nabla w_k\nabla\varphi+V(x)w_k\varphi dx-\mu B^\prime(w_k)\varphi\geq \lambda A^\prime(w_k)\varphi.$$
		Therefore, using the fact that $|f^\prime_{k}(0)|\leq C_5$ and $\|w_k\|\leq C_3$, we obtain
		$$\frac{C+\|\varphi\|}{k}+\int_{\mathbb{R}^N}\nabla w_k\nabla\varphi+V(x)w_k\varphi dx-\mu B^\prime(w_k)\varphi\geq \lambda A^\prime(w_k)\varphi.$$
		Now, doing $k \to +\infty$ and taking into account that $w_k\rightarrow w$, we can use the Fatou's Lemma once more proving that
		\begin{equation}\label{s1}
			\int_{\mathbb{R}^N}\nabla w\nabla\varphi+V(x)w\varphi dx-\mu B^\prime(w)\varphi-\lambda A^\prime(w)\varphi\geq 0 \, \,\text{ for each } \,\,\varphi\geq 0.
		\end{equation}
		
		At this stage we shall prove that $w$ is a weak solution for any $\varphi\in X$. In order to do that we consider $\Psi=(w+\epsilon\phi)^+$ where $\phi\in X$. Now, by using $\Psi$ as testing function in \eqref{s1}, we obtain
		\begin{eqnarray}
			0&\leq&\int_{\mathbb{R}^N}\nabla w\nabla\Psi+V(x)w\Psi dx-\mu B^\prime(w)\Psi-\lambda A^\prime(w)\Psi\nonumber \\
			&=&\int_{[w+\epsilon\phi\geq 0]}\nabla w\nabla(w+\epsilon\phi)+V(x)w(w+\epsilon\phi)dx \nonumber \\
			&-&\int_{[w+\epsilon\phi\geq 0]}\mu(I_{\alpha_2}*|w|^p)|w|^{p-2}w(w+\epsilon\phi)+\lambda(I_{\alpha_1}*a|w|^q)a(x)|w|^{q-2}w(w+\epsilon\phi) dx \nonumber \\	
			&=&\left(\int_{\mathbb{R}^N}-\int_{[w+\epsilon\phi< 0]}\right)\nabla w\nabla(w+\epsilon\phi)+V(x)w(w+\epsilon\phi)-\mu(I_{\alpha_2}*|w|^p)|w|^{p-2}w(w+\epsilon\phi) \nonumber \\
			&-&\lambda(I_{\alpha_1}*a|w|^q)a(x)|w|^{q-2}w(w+\epsilon\phi) dx.
		\end{eqnarray}
		The last expression can be rewritten in the following form:
		\begin{eqnarray}
			0&=&\|w\|^2-\lambda A(w)-\mu B(w)+\epsilon\int_{\mathbb{R}^N}\nabla w\nabla\phi+V(x)w\phi dx-\mu\epsilon B^\prime(w)\phi-\lambda\epsilon A^\prime(w)\phi dx \\
			&-&\int_{[w+\epsilon\phi< 0]}\nabla w\nabla(w+\epsilon\phi)+V(x)w(w+\epsilon\phi)dx. \nonumber \\
			&-& \int_{[w+\epsilon\phi< 0]}\mu(I_{\alpha_2}*|w|^p)|w|^{p-2}w(w+\epsilon\phi)+\lambda(I_{\alpha_1}*a|w|^q)a(x)|w|^{q-2}w(w+\epsilon\phi) dx. \nonumber
		\end{eqnarray}
		Hence, we deduce that
		\begin{eqnarray}\label{2.23}
			0&\leq&\epsilon\left[\int_{\mathbb{R}^N}\nabla w\nabla\phi+V(x)w\phi-\mu B^\prime(w)\phi-\lambda A^\prime(w)\phi dx\right]-\epsilon\int_{[w+\epsilon\phi<0]}\nabla w\nabla\phi+V(x)w\phi dx.
		\end{eqnarray}
		Now, by applying the Dominated Convergence Theorem, we see that $$\int_{[w+\epsilon\phi<0]}\nabla w\nabla\phi+V(x)w\phi dx\rightarrow 0$$ as $\epsilon\rightarrow 0$. In fact, by using the fact that $J(u)=J(|u|)$, we assume without loss of generality that $w\geq 0$. Hence, $\chi_{[w+\epsilon\varphi<0]}\rightarrow 0$ a.e. in $\mathbb{R}^N$ as $\epsilon\rightarrow 0$. Furthermore, we mention that
		$$\int_{[w+\epsilon\phi<0]}\nabla w\nabla\phi+V(x)w\phi dx=\int_{\mathbb{R}^N}\chi_{[w+\epsilon\varphi<0]}(\nabla w\nabla\phi+V(x)w\phi) dx \leq \int_{\mathbb{R}^N}(\nabla w\nabla\phi+V(x)w\phi) dx<+\infty.$$ 
		Therefore, the desired result follows by applying the Dominated Convergence Theorem. Furthermore, by dividing expression \eqref{2.23} above by $\epsilon$ and taking the limit as $\epsilon \rightarrow 0$, we obtain
		\begin{eqnarray}
			\int_{\mathbb{R}^N}\nabla w\nabla\phi+V(x)w\phi dx-\mu B^\prime(w)\phi-\lambda A^\prime(w)\phi\geq 0
		\end{eqnarray}
		for any $\phi\in X$. Therefore, by taking the direction $-\phi$, we can conclude that $w$ is a weak solution for problem \eqref{p1}. This ends the proof.
	\end{proof}
	
	\begin{lem}\label{2.8}
		Suppose $(h_1)-(h_3)$ holds. Assume also that $\lambda\in(0,\lambda^*)$ holds. Then $c_{\mathcal{N}^+}=J(v)<0$.
	\end{lem}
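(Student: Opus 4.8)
I would first obtain the strict inequality $c_{\mathcal{N}^+}<0$ from a pointwise estimate along the fibering maps, and then realize the infimum by a compactness argument modeled on Lemma \ref{2.5}; the strict negativity is exactly what keeps a minimizing sequence away from the origin. \emph{Step 1 (negativity).} Fix $\lambda\in(0,\lambda^*)$. By Proposition \ref{2.3} the set $\mathcal{N}^+$ is nonempty (indeed $t^{n,+}(u)u\in\mathcal{N}^+$ for every $u\in X\setminus\{0\}$), so it suffices to show $J(u)<0$ for each $u\in\mathcal{N}^+$. Given such a $u$, put $\phi(t)=J(tu)$; differentiating and factoring gives
$$\phi^\prime(t)=t^{2q-1}A(u)\bigl(Q_n(t)-\lambda\bigr),\qquad t>0.$$
Since $u\in\mathcal{N}$ one has $Q_n(1)=R_n(u)=\lambda$, and since $J^{\prime\prime}(u)(u,u)>0$, Proposition \ref{2.4} yields $Q_n^\prime(1)>0$; because $Q_n$ increases on $(0,t_n(u))$ with $Q_n(0^+)=0<\lambda$, the point $1$ must be the first root of $Q_n=\lambda$, i.e. $t^{n,+}(u)=1<t_n(u)$ and $Q_n<\lambda$ on $(0,1)$. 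Hence $\phi^\prime<0$ on $(0,1)$, and with $\phi(0)=0$ this gives $J(u)=\phi(1)<0$. Taking the infimum, $c_{\mathcal{N}^+}<0$.

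\emph{Step 2 (attainment).} I would take a minimizing sequence $(v_k)\subset\mathcal{N}^+$; by Lemma \ref{coercive} it is bounded, so up to a subsequence $v_k\rightharpoonup v$ in $X$, $v_k\to v$ in $L^r(\mathbb{R}^N)$ for $r\in[2,2^*)$ and a.e., and, exactly as in Proposition \ref{hl} and Lemma \ref{2.5}, $A(v_k)\to A(v)$ and $B(v_k)\to B(v)$. If $v\equiv 0$ then $A(v_k),B(v_k)\to 0$, so $J(v_k)\ge -o(1)$ and $c_{\mathcal{N}^+}\ge 0$, contradicting Step 1; hence $v\neq 0$. The claim is that $v_k\to v$ strongly. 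Granting it, $v\in\mathcal{N}$, and since $J^{\prime\prime}(v_k)(v_k,v_k)\to J^{\prime\prime}(v)(v,v)\ge 0$ while $\mathcal{N}^0=\emptyset$ (Proposition \ref{2.55}), one gets $v\in\mathcal{N}^+$ with $J(v)=\lim_k J(v_k)=c_{\mathcal{N}^+}<0$, which is the assertion. To prove the claim, suppose not and set $\delta:=\lim_k\|v_k\|^2-\|v\|^2>0$; writing $\phi_v(t)=J(tv)$ and using $v_k\in\mathcal{N}$, we have $\lim_k\|v_k\|^2=\lambda A(v)+\mu B(v)$, hence $\|v\|^2<\lambda A(v)+\mu B(v)$, i.e. $\phi_v^\prime(1)=J^\prime(v)v<0$, which forces $1<t^{n,+}(v)$ or $1>t^{n,-}(v)$. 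Introduce $g(t)=\tfrac{t^2}{2}\bigl(\lambda A(v)+\mu B(v)\bigr)-\tfrac{\lambda t^{2q}}{2q}A(v)-\tfrac{\mu t^{2p}}{2p}B(v)$, so that $\phi_v(t)=g(t)-\tfrac{t^2}{2}\delta$ and $\phi_v^\prime(t)=g^\prime(t)-t\delta$. One checks $g^\prime(1)=0$, and passing to the limit in $J^{\prime\prime}(v_k)(v_k,v_k)>0$ gives $g^{\prime\prime}(1)\ge 0$; since $g^\prime$ vanishes at most twice on $(0,+\infty)$, this makes $1$ the local minimum point of $g$ (or a degenerate critical point), so $g^\prime\le 0$ on $(0,1)$ and therefore $\phi_v^\prime<0$ on $(0,1)$. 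But $\phi_v^\prime>0$ on $(t^{n,+}(v),t^{n,-}(v))$ (as in Proposition \ref{2.3}, because $\Lambda_n(v)\ge\lambda^*>\lambda$), so this interval cannot be contained in $(0,1)$; hence $t^{n,-}(v)>1$, and with the dichotomy above, $1<t^{n,+}(v)$. As $\phi_v$ is strictly decreasing on $(0,t^{n,+}(v)]$ and $t^{n,+}(v)v\in\mathcal{N}^+$,
$$c_{\mathcal{N}^+}\le J\bigl(t^{n,+}(v)v\bigr)=\phi_v\bigl(t^{n,+}(v)\bigr)<\phi_v(1)=J(v)\le\liminf_k J(v_k)=c_{\mathcal{N}^+},$$
a contradiction; thus $v_k\to v$ strongly and the proof is complete.

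\emph{Main obstacle.} The routine ingredients---coercivity and boundedness, the continuity $A(v_k)\to A(v)$, $B(v_k)\to B(v)$ obtained via Hardy--Littlewood--Sobolev and the compact embedding, and the algebra of the fibering maps---are as in Lemmas \ref{coercive}--\ref{2.5}. The genuine difficulty is the strong-convergence claim in Step 2: one must rule out that the weak limit $v$ sits on the far branch of its own fiber ($t^{n,-}(v)<1$), since in that case the comparison with $t^{n,+}(v)v\in\mathcal{N}^+$ collapses. This is precisely where the strict negativity $c_{\mathcal{N}^+}<0$ from Step 1, the limit inequality $g^{\prime\prime}(1)\ge 0$ coming from membership of the $v_k$ in $\mathcal{N}^+$, and the strict monotonicity of $Q_n$ (hence $\mathcal{N}^0=\emptyset$ for $\lambda<\lambda^*$, Proposition \ref{2.55}) have to be combined.
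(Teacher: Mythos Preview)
Your argument is correct, but you have read more into the statement than the paper does. In the paper, Lemma \ref{2.8} is only the negativity assertion $c_{\mathcal{N}^+}<0$; the expression ``$c_{\mathcal{N}^+}=J(v)$'' is just the defining infimum $\inf_{v\in\mathcal{N}^+}J(v)$, not a claim of attainment. The attainment is the content of the next lemma (Lemma \ref{2.9}), so your Step 2 is really a proof of that separate result.

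For Step 1, the paper's route is slightly different from yours: instead of showing directly that $\phi'<0$ on $(0,1)$, it uses the pointwise comparison $R_e(tu)<R_n(tu)$ for $t\in(0,t_e(u))$, together with $t^{n,+}(u)<t_n(u)<t_e(u)=p^{1/(2p-2)}t_n(u)$, to conclude $R_e(t^{n,+}(u)u)<\lambda$ and hence $J(t^{n,+}(u)u)<0$. Your computation $\phi'(t)=t^{2q-1}A(u)\bigl(Q_n(t)-\lambda\bigr)$ is a clean alternative that avoids invoking $R_e$ altogether; both approaches yield the same conclusion that every element of $\mathcal{N}^+$ has negative energy.

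For Step 2, your proof is valid but more intricate than necessary. The paper's Lemma \ref{2.9} bypasses the auxiliary function $g$ and the second-order information $g''(1)\ge 0$ entirely: once $\|v\|<\liminf_k\|v_k\|$, one simply observes that for every $t\in(0,1]$,
\[
J'(tv)v<\liminf_{k}J'(tv_k)v_k\le 0,
\]
the last inequality because $t^{n,+}(v_k)=1$ forces $\phi_{v_k}'\le 0$ on $(0,1]$. This already gives $t^{n,+}(v)>1$ directly, without needing to rule out the branch $t^{n,-}(v)<1$ via your $g'$--analysis. The contradiction then closes exactly as you wrote. Your detour through $g$ and $g''(1)\ge 0$ works, but it recovers by hand what the sign of $J'(tv_k)v_k$ on $(0,1]$ gives for free.
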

	\begin{proof} Let $\lambda\in(0,\lambda^*)$ be fixed. It is worth noting that $R_e(tu)<R_n(tu)$ for each $t\in(0,t_e(u))$, see Remark \ref{2.4} and Figure \ref{figure2}. It is important to emphasize that $R_e(tu)=R_n(tu)$ if, and only if $t=t_e(u)$. Now, we claim that $t^{n,+}(u) < t_e(u)$ holds. In fact, we observe that $t_e(u)=p^{\frac{1}{2p-2}}t_n(u)$ for each $p\in\left(2_{\alpha_2},2^*_{\alpha_2}\right)$. Recall also that $p^{\frac{1}{2p-2}}>1$. Hence, we obtain that $t^{n,+}(u)<t_e(u)$ and
		$t^{n,+}(u)<t_n(u)<t_e(u).$ Under these conditions, we observe that $R_e(t^{n,+}(u)u)<R_n(t^{n,+}(u)u)=\lambda$. It follows from Remark \ref{2o1} that $R_e(t^{n,+}(u)u)<\lambda$ if and only if $J(t^{n,+}(u)u)<0$. Therefore, by using the fact that $t^{n,+}(u)u\in\mathcal{N}^+$, we infer that
		$$c_{\mathcal{N}^+}=\inf_{v\in\mathcal{N}^+}J(v)\leq J(t^{n,+}(u)u)<0.$$
		This finishes the proof.
	\end{proof} 
	
	\begin{lem}\label{2.9}
		Suppose $(h_1)-(h_3)$ and $\lambda\in(0,\lambda^*)$ holds. Consider $(u_k)_{k\in\mathbb{N}}\subset\mathcal{N}^+$ as a minimizing sequence for the functional $J$ in $\mathcal{N}^+$. Then there exists $u\in X\setminus\{0\}$ such that, up to a subsequence, $u_k\rightarrow u$ in $X$ where $u\in\mathcal{N}^+$. Furthermore, we obtain that $c_{\mathcal{N}^+}=J(u)$.
	\end{lem}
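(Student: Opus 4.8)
The plan is to follow the scheme of the proof of Lemma~\ref{2.5}, but adapted to the local--minimum geometry of the fibering map on $\mathcal{N}^+$ and using that $c_{\mathcal{N}^+}<0$ by Lemma~\ref{2.8}. First I would observe that, since $J$ is coercive on $\mathcal{N}$ by Lemma~\ref{coercive}, the minimizing sequence $(u_k)_{k\in\mathbb{N}}$ is bounded in $X$; hence, up to a subsequence, $u_k\rightharpoonup u$ in $X$ for some $u\in X$. By the compact embedding $X\hookrightarrow L^r(\mathbb{R}^N)$ for $r\in[2,2^*)$, together with the pointwise convergence and an $L^r$--domination, the Dominated Convergence Theorem gives $A(u_k)\to A(u)$ and $B(u_k)\to B(u)$. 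Since $u_k\in\mathcal{N}$, we have $\|u_k\|^2=\lambda A(u_k)+\mu B(u_k)$, so the limit $\ell^2:=\lim_k\|u_k\|^2=\lambda A(u)+\mu B(u)$ exists, while $\|u\|^2\le\ell^2$ by weak lower semicontinuity of the norm. If $u=0$, then $\ell=0$, so $u_k\to 0$ in $X$ and $J(u_k)\to 0$, which contradicts $c_{\mathcal{N}^+}<0$; therefore $u\neq 0$, and consequently $A(u),B(u)>0$.

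The crucial step will be to prove the strong convergence $u_k\to u$ in $X$, i.e. $\|u\|^2=\ell^2$. Arguing by contradiction, suppose $\|u\|^2<\ell^2$; then $R_n(u)=\frac{\|u\|^2-\mu B(u)}{A(u)}<\frac{\ell^2-\mu B(u)}{A(u)}=\lambda$. By Proposition~\ref{2.3}, since $u\neq 0$ and $\lambda<\lambda^*$, the fibering map $\phi(t)=J(tu)$ is strictly decreasing on $(0,t^{n,+}(u))$ and strictly increasing on $(t^{n,+}(u),t^{n,-}(u))$, the point $t^{n,+}(u)u$ belongs to $\mathcal{N}^+$, and $Q_n(t)=R_n(tu)>\lambda$ exactly for $t\in(t^{n,+}(u),t^{n,-}(u))$. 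Since $Q_n(1)=R_n(u)<\lambda$, the value $t=1$ lies outside that interval, so either $1<t^{n,+}(u)$ or $1>t^{n,-}(u)$. To rule out the second alternative I would compare $Q_n$ with the limiting fibering map $\overline{Q}(t):=\frac{t^{2-2q}\ell^2-t^{2p-2q}\mu B(u)}{A(u)}$, which satisfies $\overline{Q}(t)>Q_n(t)$ for every $t>0$ (because $\ell^2>\|u\|^2$) and $\overline{Q}(1)=\lambda$ (because $\ell^2=\lambda A(u)+\mu B(u)$); moreover its maximum obeys $\max_{t>0}\overline{Q}(t)\ge\overline{Q}(t_n(u))>Q_n(t_n(u))=\Lambda_n(u)\ge\lambda^*>\lambda$ by Lemma~\ref{lema2.1}, and its maximum point $\overline{t}_n$, characterized by $\overline{t}_n^{\,2p-2}=\frac{(1-q)\ell^2}{(p-q)\mu B(u)}=\lim_k t_n(u_k)^{2p-2}\ge 1$ (the inequality because $u_k\in\mathcal{N}^+$ forces $t^{n,+}(u_k)=1<t_n(u_k)$), must in fact satisfy $\overline{t}_n>1$, since $\overline{t}_n=1$ would give $\max_{t>0}\overline{Q}(t)=\overline{Q}(1)=\lambda$. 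Hence $\{t>0:\overline{Q}(t)>\lambda\}=(1,\overline{t}^{\,-})$ for some $\overline{t}^{\,-}>\overline{t}_n$; if $1>t^{n,-}(u)$, then evaluating $\overline{Q}$ at the root $t^{n,-}(u)$ of $Q_n=\lambda$ gives $\overline{Q}(t^{n,-}(u))>\lambda$, so $t^{n,-}(u)\in(1,\overline{t}^{\,-})$, i.e. $t^{n,-}(u)>1$, a contradiction. Thus $1<t^{n,+}(u)$, and by the strict monotonicity of $\phi$ on $(0,t^{n,+}(u))$ we obtain $J(u)=\phi(1)>\phi(t^{n,+}(u))=J(t^{n,+}(u)u)\ge c_{\mathcal{N}^+}$. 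On the other hand, weak lower semicontinuity of the norm together with $A(u_k)\to A(u)$ and $B(u_k)\to B(u)$ yields $J(u)\le\liminf_k J(u_k)=c_{\mathcal{N}^+}$, a contradiction. Therefore $\|u\|^2=\ell^2$, and since $u_k\rightharpoonup u$, the sequence converges strongly: $u_k\to u$ in $X$.

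Finally, with the strong convergence in hand the conclusion follows readily: from $u_k\in\mathcal{N}$ and $u\neq 0$ we get $u\in\mathcal{N}$; passing to the limit in $J^{\prime\prime}(u_k)(u_k,u_k)>0$ gives $J^{\prime\prime}(u)(u,u)\ge 0$, and since $\mathcal{N}^0=\emptyset$ for $\lambda\in(0,\lambda^*)$ by Proposition~\ref{2.55}, we in fact have $J^{\prime\prime}(u)(u,u)>0$, hence $u\in\mathcal{N}^+$; and $J(u)=\lim_k J(u_k)=c_{\mathcal{N}^+}$. I expect the main obstacle to be precisely this strong--convergence step, and within it the exclusion of the branch $1\ge t^{n,-}(u)$: contrary to the $\mathcal{N}^-$ situation of Lemma~\ref{2.5}, where the relevant critical point is a local maximum and a single energy comparison closes the argument, here $t^{n,+}(u)$ is a local minimum, so one must first locate $t=1$ relative to the roots of $Q_n=\lambda$ via the comparison with the limiting fibering map $\overline{Q}$ before the energy comparison can be carried through.
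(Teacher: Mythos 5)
Your proof is correct, and its overall skeleton coincides with the paper's: coercivity of $J$ on $\mathcal{N}$ (Lemma \ref{coercive}) gives boundedness, the compact embeddings give $A(u_k)\to A(u)$, $B(u_k)\to B(u)$, nontriviality of $u$ follows from $c_{\mathcal{N}^+}<0$ (Lemma \ref{2.8}), and the contradiction under the assumption $\|u\|^2<\ell^2$ is closed exactly as in the paper via $c_{\mathcal{N}^+}\leq J(t^{n,+}(u)u)\leq J(u)<\liminf_k J(u_k)=c_{\mathcal{N}^+}$. The one place where you genuinely diverge is the mechanism for locating $t=1$ to the left of $t^{n,+}(u)$. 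The paper does this in one stroke: since $u_k\in\mathcal{N}^+$ forces $t^{n,+}(u_k)=1$, the fibering maps satisfy $\frac{d}{dt}J(tu_k)\leq 0$ for all $t\in(0,1]$, and weak lower semicontinuity of $t\mapsto J^{\prime}(tu)u$ then yields $\frac{d}{dt}J(tu)<0$ on the whole interval $(0,1]$, so the first critical point $t^{n,+}(u)$ exceeds $1$; no case distinction is needed. You instead exploit only the single relation $R_n(u)<\lambda$ (equivalently $J^{\prime}(u)u<0$), which leaves open the alternative $1>t^{n,-}(u)$, and you exclude it by introducing the limiting quotient $\overline{Q}(t)=\bigl(t^{2-2q}\ell^2-t^{2p-2q}\mu B(u)\bigr)/A(u)$, checking $\overline{Q}(1)=\lambda$, $\overline{Q}>Q_n$, $\max_{t>0}\overline{Q}>\lambda^*$ via Lemma \ref{lema2.1}, and reading off that $\{t:\overline{Q}(t)>\lambda\}=(1,\overline{t}^{\,-})$. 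This is valid, and the device of comparing with the fibering map of the "weak limit of the constraint" is a reasonable general-purpose trick; but it costs you an extra page of bookkeeping (the location of $\overline{t}_n$, the structure of the superlevel set) that the paper's interval-wide lower-semicontinuity argument avoids entirely. Everything else — the strict decrease of $\phi$ on $(0,t^{n,+}(u))$, the passage $J^{\prime\prime}(u)(u,u)\geq 0$ combined with $\mathcal{N}^0=\emptyset$ (Proposition \ref{2.55}) to place $u$ in $\mathcal{N}^+$ — matches the paper.
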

	\begin{proof}	As was mentioned in Lemma \ref{coercive} the energy functional $J$ is coercive over $\mathcal{N}$. Consider  $(u_k)_{k\in\mathbb{N}}\subset\mathcal{N}^+$ as a minimizing sequence. Therefore, we obtain that $u_k\rightharpoonup u$ in $X$ for some $u \in X$. Moreover, using the fact that 
		the embedding $X\hookrightarrow L^r(\mathbb{R}^N),  r\in[2,2^*)$ is compact, we deduce that $u_k\rightarrow u$ in $L^r(\mathbb{R}^N)$, $u_k\rightarrow u$ a.e. in $\mathbb{R}^N$ and $|u_k|\leq h_r\in L^r(\mathbb{R}^N)$. Notice also that	
		$$\lambda A(u_k)=\left(\frac{2q}{1-q}\right)\left(\frac{2(p-1)}{4p}\right)B(u_k)-\left(\frac{2q}{1-q}\right)J(u_k). $$
		Now, we we assume that $u_k\rightharpoonup 0$. Using the compact embedding listed just above together with the Dominated Convergence Theorem, we know that $A(v_k)\rightarrow 0$ and $B(u_k)\rightarrow 0$. Furthermore, we mention also that  $J(u_k)\rightarrow c_{\mathcal{N}^+}$. In particular, we obtain
		\begin{eqnarray}
			0 =	\lim_{k\rightarrow+\infty} \lambda A(u_k)&=&\lim_{k\rightarrow+\infty}\left(\frac{2q}{1-q}\right)\left(\frac{2(p-1)}{4p}\right)B(u_k)-\left(\frac{2q}{1-q}\right)J(u_k) = -\left(\frac{2q}{1-q}\right)c_{\mathcal{N}^+} > 0. 
		\end{eqnarray} 
		Here was used the fact that $c_{\mathcal{N}^+}<0$, see  Lemma \ref{2.8}.
		This is a contradiction proving that $u_k\rightharpoonup u$ where $u\neq 0$. Furthermore, we infer that 
		$$\lambda A(u)\geq-\frac{2q}{1-q}c_{\mathcal{N}^+}>0.$$
		Now, we assume by contradiction that $u_k\not\rightarrow u$. Under these conditions, we obtain $\|u\|<\liminf_{k\rightarrow +\infty}\|u_k\|$.
		Hence, by using Proposition \ref{2.3}, there exists a unique $t^{n,+}(u)>0$ such that $t^{n,+}(u)u\in\mathcal{N}^+$. Furthermore, we know that $\phi^\prime(t^{n,+}(u))=J^\prime(t^{n,+}(u)u)u=0$ and $\phi(t^{n,+}(u))=J(t^{n,+}(u)u)<0$. As a consequence, using once again the compact embedding $X\hookrightarrow L^r(\mathbb{R}^N), r\in[2,2^*)$, we obtain that
		$$\frac{d}{dt}J(tu)=J^\prime(tu)u<\liminf_{k\rightarrow +\infty}J^\prime(tu_k)u_k=\liminf_{k\rightarrow +\infty}\frac{d}{dt}J(tu_k)\leq 0$$
		holds for each $t\in(0,1]$. The last statement implies that $t^{n,+}(u)>1$. Hence, we obtain that
		$$\frac{d}{dt}J(tu_k)=J^\prime(tu_k)u_k\leq 0, \,\,\, t\in[0,1].$$
		Under these conditions, we infer that  
		$$c_{\mathcal{N}^+}\leq J(t^{n,+}(u)u)\leq J(u)<\liminf J(u_k)=c_{\mathcal{N}^+}.$$ This is a contradiction proving that $u_k\rightarrow u$ in $X$. Since the functional $J$ is continuous the desired result follows.
	\end{proof}
	\begin{prop}\label{p2.8}
		Suppose $(h_1)-(h_3)$ holds. Assume also that $\lambda\in(0,\lambda^*)$. Then there exists a function $u\in\mathcal{N}^+$ obtained in the same way of Lemma \ref{2.9} in such way that $c_{\mathcal{N}^+}=J(u)$. Furthermore, $u$ is a weak solution for the problem \eqref{p1}.
	\end{prop}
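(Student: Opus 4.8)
The plan is to follow the scheme of Proposition \ref{weak1} with the obvious modifications: replace $\mathcal{N}^-$ by $\mathcal{N}^+$, use the curve of Lemma \ref{curva2} in place of that of Lemma \ref{curva}, and use Lemma \ref{2.9} in place of Lemma \ref{2.5}. First I would invoke Lemma \ref{2.9} to produce $u\in\mathcal{N}^+$ with $J(u)=c_{\mathcal{N}^+}$. Since $c_{\mathcal{N}^+}<0$ by Lemma \ref{2.8} and $\mathcal{N}^0=\emptyset$ for $\lambda\in(0,\lambda^*)$ by Proposition \ref{2.55}, the set $\overline{B_\rho(u)}\cap\overline{\mathcal{N}^+}$ is, for $\rho>0$ small, a complete metric space contained in $\mathcal{N}^+$ (because $\mathcal{N}^-$ and $\mathcal{N}^0$ are closed, $u\notin\mathcal{N}^-\cup\mathcal{N}^0$, and $0$ stays away from $B_\rho(u)$), on which $\inf J=c_{\mathcal{N}^+}$. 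Applying the Ekeland Variational Principle there yields a minimizing sequence $(u_k)_{k\in\mathbb{N}}\subset\mathcal{N}^+$ with $J(v)\ge J(u_k)-\frac1k\|v-u_k\|$ for all $v$ in this set; by Lemma \ref{2.9} again, up to a subsequence $u_k\to u$ strongly in $X$, with $u\in\mathcal{N}^+$ and $J(u)=c_{\mathcal{N}^+}$.

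Next, fix $\varphi\ge 0$ and, for each $k$, use Lemma \ref{curva2} to build $t\mapsto h_k(t)$ with $h_k(0)=1$ and $h_k(t)(u_k+t\varphi)\in\mathcal{N}^+$. As in Proposition \ref{weak1}, the main preparatory step is to show that the right derivative $h_k^\prime(0)$ exists and satisfies $|h_k^\prime(0)|\le C$ uniformly in $k$; the argument is the same, the only structural difference being that on $\mathcal{N}^+$ one has $J^{\prime\prime}(u_k)(u_k,u_k)=(2-2q)\|u_k\|^2-(2p-2q)\mu B(u_k)>0$, a quantity that is bounded away from $0$ for $k$ large because $u_k\to u\in\mathcal{N}^+$. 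With this bound in hand, I would evaluate the Ekeland inequality along $v=h_k(t)(u_k+t\varphi)$, divide by $t$, let $t\to 0^+$, drop the term proportional to $\|u_k\|^2-\lambda A(u_k)-\mu B(u_k)=0$, and treat the nonlocal singular term via Fatou's Lemma together with the Mean Value Theorem exactly as in \eqref{fatou}. Letting $k\to\infty$ and using $u_k\to u$ and Fatou once more gives
\begin{equation*}
\int_{\mathbb{R}^N}\left(\nabla u\nabla\varphi+V(x)u\varphi\right)dx-\mu B^\prime(u)\varphi-\lambda A^\prime(u)\varphi\ge 0\qquad\text{for all }\varphi\ge 0.
\end{equation*}

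To remove the sign restriction on the test function, I would repeat the truncation argument at the end of Proposition \ref{weak1}: for $\phi\in X$ and $\epsilon>0$, insert $\Psi=(u+\epsilon\phi)^+$ into the inequality above, split the integral over $\mathbb{R}^N$ and over $[u+\epsilon\phi<0]$, use $u\in\mathcal{N}$ (so $\|u\|^2-\lambda A(u)-\mu B(u)=0$), note that we may assume $u\ge 0$ since $J(u)=J(|u|)$ so $\chi_{[u+\epsilon\phi<0]}\to 0$ a.e. as $\epsilon\to 0$, apply the Dominated Convergence Theorem, and finally divide by $\epsilon$ and let $\epsilon\to 0$; this produces $\langle u,\phi\rangle-\mu B^\prime(u)\phi-\lambda A^\prime(u)\phi\ge 0$ for every $\phi\in X$, and replacing $\phi$ by $-\phi$ turns this into an equality, so $u$ is a weak solution of \eqref{p1} (and, being a nontrivial nonnegative solution of an equation with nonnegative right-hand side, positive). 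The hard part is entirely in the non-differentiability of $J$: establishing $|h_k^\prime(0)|\le C$ and justifying the Fatou/Mean-Value passage for the difference quotients of $A$ are where all the effort goes; the one feature genuinely new compared with the $\mathcal{N}^-$ case is that $\mathcal{N}^+$ fails to be closed ($0$ lies in its closure), which is precisely why the strict inequality $c_{\mathcal{N}^+}<0$ (Lemma \ref{2.8}) and the strong convergence in Lemma \ref{2.9} are indispensable for keeping the Ekeland sequence inside $\mathcal{N}^+$.
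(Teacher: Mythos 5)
Your proposal is correct and follows essentially the same route as the paper: Lemma \ref{2.9} for the minimizer, Ekeland's Variational Principle to produce a minimizing sequence satisfying the penalized inequality, the curve of Lemma \ref{curva2} with the bound $|h_k^\prime(0)|\le C$ (where, as you note, the sign flip $(1-q)\|u_k\|^2-(p-q)\mu B(u_k)>0$ bounded away from zero is the only structural change, swapping which of the two tools rules out $h_k^\prime(0)=+\infty$ versus $-\infty$), Fatou's Lemma for the difference quotients of $A$, and the truncation $(u+\epsilon\phi)^+$ to pass to arbitrary test functions. The only cosmetic difference is how the non-closedness of $\mathcal{N}^+$ is handled: the paper applies Ekeland on $\overline{\mathcal{N}^+}=\mathcal{N}^+\cup\{0\}$ and uses $c_{\mathcal{N}^+}<0=J(0)$ to keep the sequence in $\mathcal{N}^+$, whereas you localize to $\overline{B_\rho(u)}\cap\overline{\mathcal{N}^+}$; both are valid.
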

	\begin{proof}
		According to Lemma \ref{2.9} we obtain a function $u \in \mathcal{N}^+$ such that $c_{\mathcal{N}^+} = J(u)$. On the other hand, the manifold $\mathcal{N}^+$ is not closed set. Under these conditions, we infer that $\overline{\mathcal{N}^+} = \mathcal{N}^+ \cup \{0\}$ which is a closed set. Hence, we can apply the Ekeland’s
		Variational Principle ensuring the existence of a minimizing sequence which we shall still denote as $(u_k)_{k \in \mathbb{N}} \subset \mathcal{N}^+$. Up to a subsequence, we have $u_k \rightarrow u$ where $u \in \mathcal{N}^+$ satisfies $J(u) = c_{\mathcal{N}^+}$. Furthermore, we infer that
		\begin{equation}\label{ii}
			J(w)\geq J(u_k)-\frac{1}{k}\|w-u_k\| \text{ for all } w \in\mathcal{N}^+.
		\end{equation}
		Define the function $h_k(t) = h_k(t\varphi)$ where $h_k(t)$ is the function obtained in Lemma \ref{curva2}. Once again we shall verify that the right derivative at zero of $h_k$, denoted by $h^\prime_k(0)$, is bounded by a positive constant, that is, $h^\prime_k(0)$ exists and satisfies $|h^\prime_k(0)| \leq C$ for some constant $C > 0$. Now, we assume by contradiction that $h^\prime_k(0) = +\infty$. Hence, for small values of $t$, we obtain that $h_k(t) > 1$. Hence, 
		\begin{equation}
			\|h_k(t)(u_k+t\varphi)-u_k\|=\|(h_k(t)-1)u_k+th_k(t)\varphi\|\leq|h_k-1|\|u_k\|+th_k(t)\|\varphi\|.
		\end{equation}
		Therefore, by using \eqref{ii}, we see that
		\begin{equation}\label{2.35}
			[h_k(t)-1]\frac{\|u_k\|}{k}+th_k(t)\frac{\|\varphi\|}{k}\geq J(u_k)-J(h_k(t)(u_k+t\varphi)).
		\end{equation}
		Recall also that $u_k \in \mathcal{N}^+$ and $h_k(t)(u_k + t\varphi) \in \mathcal{N}^+$. Under these conditions, the last estimate can be written as follows:
		\begin{equation*}
			J(u_k)=\left(\frac{1}{2}-\frac{1}{2q}\right)\|u_k\|^2-\left(\frac{1}{2p}-\frac{1}{2q}\right)\mu B(u_k).
		\end{equation*}
		As a consequence, by using \eqref{2.35}, we infer that 
		\begin{eqnarray*}
			[h_k(t)-1]\frac{\|u_k\|}{k}+th_k(t)\frac{\|\varphi\|}{k}&\geq&\left(\frac{1}{2}-\frac{1}{2q}\right)\|u_k\|^2-\left(\frac{1}{2p}-\frac{1}{2q}\right)\mu B(u_k) \nonumber \\ &-&\left(\frac{1}{2}-\frac{1}{2q}\right)h^2_k(t)\|u_k+t\varphi\|^2+\left(\frac{1}{2p}-\frac{1}{2q}\right)\mu h^{2p}_k(t)B(u_k+t\varphi).
		\end{eqnarray*}
		Therefore, we obtain that 
		\begin{eqnarray}
			[h_k(t)-1]\frac{\|u_k\|}{k}+th_k(t)\frac{\|\varphi\|}{k}\geq -\left(\frac{1}{2}-\frac{1}{2q}\right)[\|u_k+t\varphi\|^2-\|u_k\|^2]-\left(\frac{1}{2}-\frac{1}{2q}\right)[h^2_k(t)-1]\|u_k+t\varphi\|^2 \nonumber \\+\left(\frac{1}{2p}-\frac{1}{2q}\right)\mu[h_k^{2p}(t)-1]B(u_k)+\left(\frac{1}{2p}-\frac{1}{2q}\right)\mu h_k^{2p}(t)[B(u_k-t\varphi)-B(u_k)]. \nonumber
		\end{eqnarray}
		Dividing both sides of the inequality by $t$ and taking the limit as $t\rightarrow 0$, we see that
		\begin{eqnarray}\label{cota}
			\frac{\|\varphi\|}{k}\geq\frac{1}{q}\left[(1-q)\|u_k\|^2-(p-q)\mu B(u_k)-q\frac{\|u_k\|^2}{k}\right]h^\prime_k(0) \nonumber \\
			+\left(\frac{1-q}{q}\right)\int_{\mathbb{R}^N}\nabla u_k\nabla\varphi+V(x)u_k\varphi dx-\frac{p-q}{q}\mu B^\prime(u_k)\varphi.
		\end{eqnarray}
		Let us analyze the sign of the expression within the parentheses given just above. Since $(u_k)_{k\in\mathbb{N}}\subset\mathcal{N}^+$ we mention that $(1-q)\|u_k\|^2-(p-q)\mu B(u_k)>0$. According to Lemma \ref{2.9} there exists an element $u\in\mathcal{N}^+$ such that $u_k \to u$ in $X$ and $J(u)=c_{\mathcal{N}^+}$. Therefore, we find $k_0\in\mathbb{N}$ such that for all $k\geq k_0$ there holds 
		\begin{eqnarray*}
			(1-q)\|u_k\|^2-(p-q)\mu B(u_k)\geq(1-q)\|u\|^2-(1-q)\mu B(u)-\frac{1}{k}\geq C_1-\frac{1}{k}\geq C_2>0. 	
		\end{eqnarray*}
		Hence, there exists a constant $C_3$ such that
		\begin{equation*}
			(1-q)\|u_k\|^2-(p-q)\mu B(u_k)\geq C_3>0 \quad \text{for all}\ k\in\mathbb{N}.
		\end{equation*}
		Since $(u_k)_{k\in\mathbb{N}}$ as a minimizing sequence there exists a constant $C_4>0$ such that $\|u_k\|\leq C_4$. Hence, for all sufficiently large $k$, we infer that
		$$(1-q)\|u_k\|^2-(p-q)\mu B(u_k)-q\frac{\|u_k\|}{k}\geq C_3-q\frac{C_4}{k}>0.$$
		Under these conditions, by using \eqref{cota}, we see that
		$$\frac{-\left(\frac{1-q}{q}\right)\displaystyle\int_{\mathbb{R}^N}\nabla u_k\nabla\varphi+V(x)u_k\varphi dx+\left(\frac{p-q}{q}\right)\mu B^\prime(u_k)\varphi}{\frac{1}{q}\left[(1-q)\|u_k\|^2-(p-q)\mu B(u)-q\frac{\|u_k\|}{k}\right]}\geq h^\prime_k(0).$$
		The last estimate implies that there exists an upper bound for $h^\prime_k(0)$. This is a contradiction due to the fact that $h^\prime_k(0)=+\infty$.
		
		Now, we shall prove that $h^\prime_k(0)\neq-\infty$. In fact,  by using the fact that  $h_k(0)=1$ and $h_k(t)(u_k+t\varphi)\in\mathcal{N}^+$ together with $(u_k)_{k\in\mathbb{N}}\subset\mathcal{N}^+$, we obtain
		\begin{eqnarray*}
			\|u_k\|^2-\lambda A(u_k)-\mu B(u_k)=0=h^2_k(t)\|u_k+t\varphi\|^2-\lambda h^{2q}_k(t)A(u_k+t\varphi)-\mu h^{2p}_k(t)B(u_k+t\varphi).
		\end{eqnarray*}
		Using the last assertion implies that
		\begin{eqnarray*}
			0=[h^2_k(t)-1]\|u_k+t\varphi\|^2+(\|u_k+t\varphi\|^2-\|u_k\|^2)-\lambda[h^{2q}_k(t)-1]A(u_k+t\varphi) \nonumber \\
			-\lambda[A(u_k+t\varphi)-A(u_k)]-\mu[h^{2p}_k(t)-1]B(u_k+t\varphi)-\mu[B(u_k+t\varphi)-B(u_k)].
		\end{eqnarray*}
		Since we already know that $A(u_k+t\varphi)-A(u_k)\geq 0$ holds for all $\varphi\geq 0$, we infer that
		\begin{eqnarray*}
			0\leq[h^2_k(t)-1]\|u_k+t\varphi\|^2+(\|u_k+t\varphi\|^2-\|u_k\|^2)-\lambda[h^{2q}_k(t)-1]A(u_k+t\varphi) \nonumber \\
			-\mu[h^{2p}_k(t)-1]B(u_k+t\varphi)-\mu[B(u_k+t\varphi)-B(u_k)].
		\end{eqnarray*}
		Dividing both sides by $t$ in the last estimate and taking the limit as $t\rightarrow 0^+$, we obtain
		\begin{equation*}
			0\leq[2\|u_k\|^2-\lambda 2qA(u_k)-\mu 2pB(u_k)]h^\prime_k(0)+2\langle u_k,\varphi\rangle-\mu2p B^\prime(u_k)\varphi.
		\end{equation*}
		Hence, we see that 
		\begin{equation}
			\frac{-2\langle u_k,\varphi\rangle+\mu2pB^\prime(u_k)\varphi}{2\|u_k\|^2-\lambda2qA(u_k)-\mu 2pB(u_k)}\leq h^\prime_k(0).
		\end{equation}
		The last estimate give us a lower bound for $h^\prime_k(0)$. Therefore, $h^\prime_k(0)\neq -\infty$. As a consequence, we deduce that $|h^\prime_k(0)|\leq C_5$. 
		
		At this stage, we shall prove that $u$ is a weak solution to problem \eqref{p1}. Firstly, by using the condition \eqref{ii}, we infer that 
		\begin{equation}
			\frac{1}{k}[|h_k(t)-1|\|u_k\|+th_k(t)\|\varphi\|]\geq\frac{1}{k}\|h_k(t)(u_k+t\varphi)-u_k\|\geq J(u_k)-J(h_k(t)(u_k+t\varphi)).
		\end{equation}
		Hence, for all $\varphi\in X$ with $\varphi\geq 0$, there holds
		\begin{eqnarray}
			\frac{1}{k}[|h_k(t)-1|\|u_k\|+th_k(t)\|\varphi\|]&\geq&-\left[\frac{h^2_k(t)-1}{2}\right]\|u_k\|^2-\frac{h^2_k(t)}{2}[\|u_k+t\varphi\|^2-\|u_k\|^2] \nonumber \\
			&+&\lambda\left[\frac{h^{2q}_k(t)-1}{2q}\right]A(u_k+t\varphi)+\frac{\lambda}{2q}[A(u_k+t\varphi)-A(u_k)]\nonumber \\ &+&\mu\left[\frac{h^{2p}_k(t)-1}{2p}\right]B(u_k+t\varphi)+\frac{\mu}{2p}[B(u_k+t\varphi)-B(u_k)].
		\end{eqnarray}
		Now, using the last estimate and dividing both sides by $t$ and taking the limit as $t\rightarrow 0^+$, we infer that 
		\begin{eqnarray}
			\frac{1}{k}h^\prime_k(0)\|u_k\|+\frac{\|\varphi\|}{k}&\geq&-h^\prime_k(0)[\|u_k\|^2-\lambda A(u_k)-\mu B(u_k)]-\langle u_k,\varphi\rangle+\mu B^\prime(u_k)\varphi
			+\liminf_{t\rightarrow 0^+}\frac{\lambda}{2q}\frac{A(u_k+t\varphi)-A(u_k)}{t} \nonumber \\
			&=&-\langle u_k,\varphi\rangle+\mu B^\prime(u_k)\varphi+\liminf_{t\rightarrow 0^+}\frac{\lambda}{2q}\frac{A(u_k+t\varphi)-A(u_k)}{t}. \nonumber \\
		\end{eqnarray}
		On the other hand, by using Fatou's Lemma, we obtain that
		$$\liminf_{t\rightarrow 0^+}\frac{\lambda}{2q}\frac{A(u_k+t\varphi)-A(u_k)}{t}\geq\lambda A^\prime(u)\varphi.$$ 
		Hence, the last estimate implies that 
		\begin{equation}
			\frac{h^\prime_k(0)\|u_k\|+\|\varphi\|}{k}+\langle u_k,\varphi\rangle-\mu B^\prime(u_k)\varphi\geq\lambda A^\prime(u)\varphi.
		\end{equation} 
		Recall also that $u_k \rightarrow u$ in $X$. Applying the Fatou's Lemma and the Dominated Convergence Theorem, we see that 
		\begin{equation}
			\int_{\mathbb{R}^N}\nabla u\nabla\varphi+V(x)u\varphi dx-\mu B^\prime(u)\varphi-\lambda A^\prime(u)\varphi\geq0\quad\text{for all}\ \varphi\geq0.
		\end{equation}
		Furthermore, by using the same argument as was done in the proof of Proposition \ref{weak1}, we conclude that $u$ is a weak solution to problem \eqref{p1}.
		This ends the proof.
	\end{proof}
	It is important to emphasize that the set $\mathcal{N}^0$ is empty for $\lambda \in (0, \lambda^*)$, see Proposition \ref{2.55}.  However, for $\lambda = \lambda^*$, we obtain that $\mathcal{N}^0$ is nonempty set. In what follows we need to prove  that does not exist weak solutions $u$ to problem \eqref{p1} in such way that $u \in \mathcal{N}^0$ for $\lambda = \lambda^*$. This is a fundamental tool in order to prove our main results for $\lambda = \lambda^*$. Here the main idea is to prove a nonexistence result by using some arguments employed in \cite{kaye}.  As a first step, we consider the following result:
	\begin{lem}\label{2.10}
		Suppose $(h_1)-(h_3)$ holds. Assume also $\Lambda_n(u) = \lambda^*$. Then we obtain that 
		$$2\langle u,\varphi\rangle-2p\mu B^\prime(u)\varphi-2q\lambda^* A^\prime(u)\varphi=0, u \in \mathcal{N}^0, \,\, \mbox{for all} \,\, \varphi\in X.$$
	\end{lem}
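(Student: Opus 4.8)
The plan is to combine two features of the minimizer $u$: that it realizes $\lambda^*=\inf_{v\in X\setminus\{0\}}\Lambda_n(v)$ (whose existence is Lemma \ref{lema2.1}(iii)), and that it lies on $\mathcal{N}^0\subset\mathcal{N}$. The first gives a weak Euler--Lagrange identity for $\Lambda_n$ at $u$; the second supplies the algebraic relations among $\|u\|^2$, $A(u)$, $B(u)$ that collapse that identity into the claimed one. As a preliminary reduction, replacing $u$ by $t_n(u)u$ if necessary we may assume $u\in\mathcal{N}^0$: this rescaling leaves $\Lambda_n(u)=\lambda^*$ unchanged by the $0$-homogeneity (Lemma \ref{lema2.1}(i)), and it places $u$ on $\mathcal{N}^0$ because $\Lambda_n(u)=\lambda^*$ forces $t_n(u)u\in\mathcal{N}^0$, exactly as in the remark following Proposition \ref{2.55}. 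Throughout, $A(u)>0$ and $B(u)>0$ since $u\neq 0$, $a>0$ and $I_{\alpha_1},I_{\alpha_2}>0$, so every quotient below is well defined.

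First I would record the $\mathcal{N}^0$-relations. From $u\in\mathcal{N}$ one has $\|u\|^2=\lambda^*A(u)+\mu B(u)$, and from $J^{\prime\prime}(u)(u,u)=0$ one has $\|u\|^2=q\lambda^*A(u)+p\mu B(u)$. Subtracting gives $(1-q)\lambda^*A(u)=(p-1)\mu B(u)$, hence
$$\lambda^*A(u)=\frac{p-1}{p-q}\,\|u\|^2,\qquad \mu B(u)=\frac{1-q}{p-q}\,\|u\|^2,$$
equivalently $\dfrac{\|u\|^2}{A(u)}=\dfrac{p-q}{p-1}\lambda^*$ and $\dfrac{\|u\|^2}{B(u)}=\dfrac{p-q}{1-q}\mu$ (all denominators are nonzero since $1<p$, $q<1$ and $p>q$).

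Next comes the Euler--Lagrange identity for $\Lambda_n$, which is the heart of the argument and its only genuine difficulty. Starting from $\Lambda_n(v)=C_{p,q,\mu}\,\|v\|^{2\left(\frac{p-q}{p-1}\right)}B(v)^{-\frac{1-q}{p-1}}A(v)^{-1}$ and using that $v\mapsto\|v\|^2$ and $v\mapsto B(v)$ are of class $C^1$, one expects logarithmic differentiation of $\epsilon\mapsto\Lambda_n(u+\epsilon\varphi)$ at $\epsilon=0$ to yield
$$\frac{2(p-q)}{p-1}\frac{\langle u,\varphi\rangle}{\|u\|^2}-\frac{2p(1-q)}{p-1}\frac{B^\prime(u)\varphi}{B(u)}-2q\,\frac{A^\prime(u)\varphi}{A(u)}=0 \qquad\text{for all }\varphi\in X.$$
The obstacle is that the singular power $|u|^q$ with $q\in(0,1)$ prevents $\epsilon\mapsto A(u+\epsilon\varphi)$ from being differentiable in general. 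I would handle this precisely as in the proof of Proposition \ref{weak1}: for $\varphi\geq 0$, inequality \eqref{2.14} gives $A(u+t\varphi)-A(u)\geq 0$, and together with the Mean Value Theorem and Fatou's Lemma (as in \eqref{fatou}) one gets $\liminf_{t\to 0^+}\frac{A(u+t\varphi)-A(u)}{t}\geq 2q\,A^\prime(u)\varphi$; since $\Lambda_n$ is decreasing in $A$ and $\Lambda_n(u+t\varphi)\geq\Lambda_n(u)=\lambda^*$, this forces $A^\prime(u)\varphi$ to be finite and produces the \emph{inequality} $\frac{2(p-q)}{p-1}\frac{\langle u,\varphi\rangle}{\|u\|^2}-\frac{2p(1-q)}{p-1}\frac{B^\prime(u)\varphi}{B(u)}-2q\frac{A^\prime(u)\varphi}{A(u)}\geq 0$ for all $\varphi\geq 0$. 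One then upgrades this to the identity above for arbitrary $\varphi\in X$ by the truncation device $\Psi=(u+\epsilon\phi)^+$, using $u\in\mathcal{N}$ to cancel the main term (note that the left-hand side evaluated at $\varphi=u$ equals $\frac{2(p-q)-2p(1-q)-2q(p-1)}{p-1}=0$, playing the role that $\|u\|^2-\lambda A(u)-\mu B(u)=0$ plays in Proposition \ref{weak1}) and the Dominated Convergence Theorem to kill the boundary contribution, exactly as there.

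Finally I would substitute. Multiplying the Euler--Lagrange identity by $\|u\|^2$ and inserting $\dfrac{\|u\|^2}{A(u)}=\dfrac{p-q}{p-1}\lambda^*$ and $\dfrac{\|u\|^2}{B(u)}=\dfrac{p-q}{1-q}\mu$ from the $\mathcal{N}^0$-relations, the $B^\prime$-term becomes $\dfrac{2p(p-q)}{p-1}\mu B^\prime(u)\varphi$ and the $A^\prime$-term becomes $\dfrac{2q(p-q)}{p-1}\lambda^*A^\prime(u)\varphi$; after multiplying through by $\dfrac{p-1}{p-q}$ all constants cancel and one is left with
$$2\langle u,\varphi\rangle-2p\mu B^\prime(u)\varphi-2q\lambda^*A^\prime(u)\varphi=0\qquad\text{for all }\varphi\in X,$$
which is the assertion. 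This last step is pure bookkeeping; the substantive point remains the one-sided differentiation of the singular functional $A$ at the minimizer, for which the machinery already developed for Proposition \ref{weak1} applies without change.
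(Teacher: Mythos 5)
Your proposal is correct and follows essentially the same route as the paper: establish the one-sided differentiability of $A$ at the minimizer via the monotonicity $A(u+t\varphi)\ge A(u)$ for $\varphi\ge 0$ together with the minimality $\Lambda_n(u+t\varphi)\ge\lambda^*$, derive the variational inequality for $\Lambda_n$ in nonnegative directions, insert the $\mathcal{N}^0$-relations $\lambda^*A(u)=\frac{p-1}{p-q}\|u\|^2$ and $\mu B(u)=\frac{1-q}{p-q}\|u\|^2$, and upgrade to an identity for all $\varphi\in X$ via the truncation $(u+\epsilon\phi)^+$ and the Dominated Convergence Theorem. The only cosmetic differences are your use of logarithmic differentiation in place of the paper's quotient-rule computation and its explicit factorization $\Lambda_n=C_{p,q,\mu}\,f\cdot g$, which are the same calculation.
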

	\begin{proof}
		According to Lemma \ref{lema2.1}, we observe that $$\Lambda_n(u)=C_{p,q,\mu}\frac{\|u\|^{2\left(\frac{p-q}{p-1}\right)}}{B(u)^{\frac{1-q}{p-1}}A(u)}, u \in X \setminus \{0\}.$$
		Consider the functions $f: X \setminus \{0\} \to \mathbb{R}$ and $g: X \setminus \{0\}  \to \mathbb{R}$ in the following form:
		$$f(u)=\frac{1}{A(u)}\text{ and }g(u)=\frac{\|u\|^{2\left(\frac{p-q}{p-1}\right)}}{B(u)^{\frac{1-q}{p-1}}}.$$Hence, we write $\Lambda_n(u) = C_{p,q,\mu} f(u) g(u)$.
		
		1) Let us show that $\langle f^\prime(u),\varphi\rangle$ exists for all $\varphi\in X_+$ and for all $u\in\mathcal{N}^0$, where $X_{+} = \{\phi \in X: \phi \geq 0\}$.
		We know that $g(u+t\varphi)$ is well-defined. It is easy to verify that $g^\prime(u)\varphi$ exists for each $\phi \in X$. Recall also that $u$ is a minimum of $\Lambda_n$, that is, we have that $\Lambda_n(u)=\lambda^*$. Therefore, we obtain that 
		$$\Lambda_n(u+t\varphi)-\Lambda_n(u)=\Lambda_n(u+t\varphi)-\lambda^*\geq 0.$$
		The last estimate implies that $f(u+t\varphi)g(u+t\varphi)-f(u)g(u)\geq 0$. It is straightforward to verify that
		\begin{equation}\label{2.43}
			f(u+t\varphi)(g(u+t\varphi)-g(u))\geq-g(u)(f(u+t\varphi)-f(u)).
		\end{equation}
		Define the function $L(t)=f(u+t\varphi)=\frac{1}{A(u+t\varphi)}$. Now, by using the Mean Value Theorem, we infer that
		$$\frac{L(t)-L(0)}{t}=L^\prime(\theta)\quad \theta\in[0,t].$$ 
		Under these conditions, by using the fact that $\theta\rightarrow 0$ as $t\rightarrow 0$, we deduce that
		$$\lim_{t\rightarrow 0}\frac{L(t)-L(0)}{t}=\lim_{t\rightarrow 0}L^\prime(\theta)=L(0).$$
		Furthermore, we mention that
		$$L^\prime(t)=-[A(u+t\varphi)]^{-2}\frac{d}{dt}A(u+t\varphi)=-[A(u+t\varphi)]^{-2}\left[2q\int_{\mathbb{R}^N}(I_{\alpha_1}*a|u+t\varphi|^q)a(x)|u+t\varphi|^{q-2}(u+t\varphi)\varphi dx\right].$$
		In particular, we obtain
		$$L^\prime(0)=-[A(u)]^{-2}\left[2q\int_{\mathbb{R}^N}(I_{\alpha_1}*a|u|^q)a(x)|u|^{q-2}u\varphi dx\right].$$ 
		Therefore, by dividing expression \eqref{2.43} by $t$ and taking the limit as $t\rightarrow 0$, we obtain 
		\begin{equation}
			+\infty>\langle g^\prime(u),\varphi\rangle f(u)\geq -g(u)\liminf_{t\rightarrow 0^+} L^\prime(\theta)=g(u)L^\prime(0).
		\end{equation}
		The last assertion implies that
		\begin{equation}\label{u>0}
			+\infty>\langle g^\prime(u),\varphi\rangle f(u)\geq g(u) [A(u)]^{-2}2q A^\prime(u)\varphi.
		\end{equation}
		Therefore, $0 < A^\prime(u)\varphi < +\infty$ for all $\varphi \in X_+$. As a consequence, we know that $A^\prime(u)\varphi$ exists for each $\phi \geq 0$. Now, by using the last sentence, we obtain that $u>0$ a.e. in $\mathbb{R}^N$. 
		Recall also that $f(u) = [A(u)]^{-1}$. Hence,
		$$\langle f^\prime(u),\varphi\rangle=-2q[A(u)]^{-2}A^\prime(u)\varphi, \,\, \varphi \geq 0.$$
		
		2) Let us prove that $2\langle u,\varphi\rangle-2p B'(u)\varphi-2q\lambda^*A'(u)\varphi\geq 0$ for all $\varphi\in X_+$.
		Assume without loss of generality that $\|u\|=1$. The last sentence is possible due to the fact that $\Lambda_n$ is $0$-homogeneous. Now, we shall calculate the derivative of  $\Lambda_n$ in the direction $\varphi\in X_+$. After some manipulations we obtain that 
		\begin{eqnarray}\label{2.46}
			\frac{\left[2\left(\frac{p-q}{p-1}\right)\langle u,\varphi\rangle B(u)^{\frac{1-q}{p-1}}-2p\left(\frac{1-q}{p-1}\right)B(u)^{\frac{1-q}{p-1}-1}B^\prime(u)\varphi\right] [A(u)]^{-1}}{B(u)^{2\left(\frac{1-q}{p-1}\right)}}-\frac{2q[A(u)]^{-2}A^\prime(u)\varphi}{B(u)^{\frac{1-q}{p-1}}}\geq 0.
		\end{eqnarray}
		Since $u\in\mathcal{N}^0$ we infer that
		\begin{equation*}
			(2-2q)\|u\|^2-(2p-2q)\mu B(u)=0, \quad \ (2-2p)\|u\|^2+(2p-2q)\lambda^* A(u)=0.
		\end{equation*}
		Recall also that $\|u\|=1$. In particular, we deduce that
		\begin{equation}\label{2.85}
			B(u)=\frac{1-q}{(p-q)\mu} \ ,\ A(u)=\frac{p-1}{(p-q)\lambda^*}.
		\end{equation}
		As a consequence, by using \eqref{2.85} and \eqref{2.46}, we mention that
		\begin{equation*}
			\left[\frac{\lambda^*\left(\frac{p-q}{p-1}\right)^2}{B(u)^{\frac{1-q}{p-1}}}\right]2\langle u,\varphi\rangle-\left[\frac{\lambda^*\left(\frac{p-q}{p-1}\right)^2}{B(u)^{\frac{1-q}{p-1}}}\right] 2p\mu B^\prime(u)\varphi-\left[\frac{(\lambda^*)^2\left(\frac{p-q}{p-1}\right)^2}{B(u)^{\frac{1-q}{p-1}}}\right]2q A^\prime(u)\varphi\geq 0.
		\end{equation*}
		Therefore, we conclude that 
		$$2\langle u,\varphi\rangle-2p\mu B^\prime(u)\varphi-2q\lambda^*A^{\prime}(u)\varphi\geq 0\quad\text{for all}\ \varphi\in X_+.$$
		3) Let us prove that
		\begin{equation}
			2\langle u,\varphi\rangle-2p\mu B^\prime(u)\varphi-2q\lambda^*A^{\prime}(u)\varphi=0\quad\text{for all}\ \varphi\in X.
		\end{equation}
		Define the function $\psi=(u+\epsilon\varphi)^+\in X_+$ with $\epsilon>0$ and $\varphi\in X$. Now, we can use the previous which implies that 
		$2\langle u,\psi\rangle-2p\mu B^\prime(u)\psi-2q\lambda^*A^{\prime}(u)\psi \geq 0$. In particular, we see that 
		\begin{eqnarray}
			0\leq\int_{\{u+\epsilon\varphi>0\}}2(\nabla u\nabla(u+\epsilon\varphi)+V(x)u(u+\epsilon\varphi))&-&2q\lambda^*(I_{\alpha_1}*a|u|^q)a(x)|u|^{q-2}u(u+\epsilon\varphi) \nonumber \\ &-&2p\mu(I_{\alpha_2}*|u|^2)|u|^{p-2}u(u+\epsilon\varphi) dx. \nonumber
		\end{eqnarray}
		It follows from the last estimate that
		\begin{eqnarray}
			0&\leq&2\|u\|^2-2q\lambda^*A(u)-2p\mu B(u)\nonumber \\ 
			&+&\epsilon\left[\int_{\mathbb{R}^N}2(\nabla u\nabla\varphi+V(x)u\varphi)-2q\lambda^*(I_{\alpha_1}*a|u|^q)a(x)|u|^{q-2}u\varphi-2p\mu(I_{\alpha_2}*|u|^p)|u|^{p-2}u\varphi dx\right] \nonumber \\
			&-&\int_{\{u+\epsilon\varphi\leq0\}} 2(\nabla u\nabla(u+\epsilon\varphi)+V(x)u(u+\epsilon\varphi))-2q\lambda^*(I_{\alpha_1}*a|u|^q)a(x)|u|^{q-2}u(u+\epsilon\varphi)dx\nonumber \\
			&+&\int_{\{u+\epsilon\varphi\leq0\}}2p(I_{\alpha_2}*|u|^p)|u|^{p-2}u(u+\epsilon\varphi)dx. \nonumber
		\end{eqnarray}
		Under these conditions, by using the last estimate, we infer that 
		\begin{eqnarray}
			0&\leq&\epsilon\left[\int_{\mathbb{R}^N}2(\nabla u\nabla\varphi+V(x)u\varphi)-2q\lambda^*(I_{\alpha_1}*a|u|^q)a(x)|u|^{q-2}u\varphi-2p\mu(I_{\alpha_2}*|u|^p)|u|^{p-2}u\varphi dx\right]\nonumber \\
			&-&2\epsilon\int_{\{u+\epsilon\varphi\leq0\}}\nabla u\nabla\varphi+V(x)u\varphi dx.
		\end{eqnarray}
		Using the same argument as in the proof of Proposition \ref{weak1} we deduce that $\chi_{[u+\epsilon\varphi\leq0]}\rightarrow0$ as $\epsilon\rightarrow0$. Then, using the last estimate and dividing both sides by $\epsilon$, taking the limit as $\epsilon\rightarrow 0$, and applying the Dominated Convergence Theorem, we obtain that
		\begin{equation}
			0\leq\int_{\mathbb{R}^N}2(\nabla u\nabla\varphi+V(x)u\varphi)-2q\lambda^*(I_{\alpha_1}*a|u|^q)a(x)|u|^{q-2}u\varphi-2p\mu(I_{\alpha_2}*|u|^p)|u|^{p-2}u\varphi dx
		\end{equation}
		for any $\varphi\in X$. Now, taking the direction $-\varphi$, we obtain the desired result.
	\end{proof}
	\begin{cor}\label{cor}
		Suppose $(h_1)-(h_4)$ holds. Assume also that $\lambda = \lambda^*$. Then the problem denoted as $(P_{\lambda^*})$ does not admit any solution $u \in \mathcal{N}^0$.
	\end{cor}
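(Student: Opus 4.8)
The plan is to argue by contradiction: assume $u\in\mathcal{N}^0$ is a weak solution of $(P_{\lambda^*})$, combine the two Euler--Lagrange–type identities available for such $u$ to kill the linear term $\langle u,\cdot\rangle$, read off a rigid pointwise relation between $a$ and $u$, and then show that this relation forces an integrability of $a$ that $(h_4)$ forbids. First I would note that, since $\lambda=\lambda^*$ and $u\in\mathcal{N}^0$, Lemma \ref{3.2} gives $t_n(u)=1$ and $\Lambda_n(u)=R_n(u)=\lambda^*$, so Lemma \ref{2.10} applies and yields both $u>0$ a.e. and
$$2\langle u,\varphi\rangle-2p\mu B'(u)\varphi-2q\lambda^* A'(u)\varphi=0\qquad\text{for all }\varphi\in X.$$
On the other hand, $u$ being a weak solution of $(P_{\lambda^*})$ means $\langle u,\varphi\rangle-\lambda^* A'(u)\varphi-\mu B'(u)\varphi=0$ for all $\varphi\in X$. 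Subtracting twice the latter from the former cancels $\langle u,\varphi\rangle$ and leaves
$$(p-1)\mu B'(u)\varphi=(1-q)\lambda^* A'(u)\varphi\qquad\text{for all }\varphi\in X.$$

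Since $\varphi$ is arbitrary (in particular one may localize with $\varphi\in C_c^\infty(\mathbb{R}^N)$) and $u>0$ a.e., this identity translates into the pointwise relation $(p-1)\mu(I_{\alpha_2}*u^p)(x)\,u(x)^{p-q}=(1-q)\lambda^*(I_{\alpha_1}*a u^q)(x)\,a(x)$ a.e. in $\mathbb{R}^N$, that is,
$$a(x)=\frac{(p-1)\mu}{(1-q)\lambda^*}\,\frac{(I_{\alpha_2}*u^p)(x)\,u(x)^{p-q}}{(I_{\alpha_1}*a u^q)(x)}\qquad\text{a.e. in }\mathbb{R}^N.$$
It is worth observing that, plugging this back into the weak formulation, $u$ in fact solves the single Choquard equation $-\Delta u+V(x)u=\tfrac{(p-q)\mu}{1-q}(I_{\alpha_2}*u^p)u^{p-1}$, from which one obtains the decay and boundedness of $u$ used below.

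To close the argument I would bound the right–hand side from above. By Hölder's inequality $a u^q\in L^1(\mathbb{R}^N)$ — using $a\in L^{2/(2-q)}(\mathbb{R}^N)$ from $(h_4)$ and $u^q\in L^2(\mathbb{R}^N)$ since $u\in X\hookrightarrow L^2(\mathbb{R}^N)$ — and $a u^q>0$ on a set of positive measure, so $(I_{\alpha_1}*a u^q)(x)\ge c\,(1+|x|)^{-(N-\alpha_1)}$ for some $c>0$ and all $x$; hence $a(x)\le C\,(1+|x|)^{N-\alpha_1}(I_{\alpha_2}*u^p)(x)\,u(x)^{p-q}$ a.e. Then, combining the Sobolev embeddings $X\hookrightarrow L^\rho(\mathbb{R}^N)$ for $\rho\in[2,2^*]$, the Hardy--Littlewood--Sobolev estimate for $I_{\alpha_2}*u^p$ (Lemma \ref{impotante}), elliptic regularity, and the decay of $u$, one bounds this right–hand side in $L^r(\mathbb{R}^N)$ for $r$ in the range \eqref{rr}; the extra constraints in $(h_4)$, notably $\alpha_1\in(N-2,N)$, $p<2\alpha_1/(N-2)$, $\alpha_2>N-4$ and $2(N-4)/(N-2)<2p-q$, are precisely what make that interval nonempty and the estimates self-consistent. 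This forces $a\in L^r(\mathbb{R}^N)$ for some $r$ satisfying \eqref{rr}, contradicting $a\notin L^r(\mathbb{R}^N)$ from $(h_4)$; hence no weak solution of $(P_{\lambda^*})$ lies in $\mathcal{N}^0$.

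The first two steps are essentially bookkeeping on top of Lemmas \ref{3.2} and \ref{2.10}. The main obstacle is the last step: carefully tracking through which Lebesgue spaces the factors $u$, $u^p$, $I_{\alpha_2}*u^p$, $u^{p-q}$ and the polynomial weight $(1+|x|)^{N-\alpha_1}$ land in, and verifying that the resulting admissible exponent range for $a$ is exactly \eqref{rr} — this is where hypothesis $(h_4)$ is used in full. A secondary technical point is supplying the decay of $u$ invoked there, either from the reduced Choquard equation above by a Moroz--Van Schaftingen type argument, or from more elementary tail estimates based on $u\in L^2\cap L^{2^*}$ and $I_{\alpha_2}*u^p\in L^\infty$.
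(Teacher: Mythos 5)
Your overall strategy coincides with the paper's: assume a weak solution $u\in\mathcal{N}^0$ exists for $\lambda=\lambda^*$, invoke Lemma \ref{2.10} to get $2\langle u,\varphi\rangle-2p\mu B'(u)\varphi-2q\lambda^*A'(u)\varphi=0$, subtract the weak-solution identity to cancel $\langle u,\varphi\rangle$, obtain $(p-1)\mu B'(u)\varphi=(1-q)\lambda^*A'(u)\varphi$ for all $\varphi$, convert this (using $u>0$ a.e.\ from Lemma \ref{2.10}) into the pointwise formula $a(x)=C(I_{\alpha_2}*|u|^p)(x)|u|^{p-q}(x)/(I_{\alpha_1}*a|u|^q)(x)$, and then use the lower bound $(I_{\alpha_1}*a|u|^q)(x)\gtrsim |x|^{-(N-\alpha_1)}$ (valid because $a|u|^q\in L^1(\mathbb{R}^N)$ by $(h_4)$ and H\"older) to get $a(x)\le C\,|x|^{N-\alpha_1}(I_{\alpha_2}*|u|^p)(x)|u|^{p-q}(x)$. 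Up to this point your argument matches the paper's proof step for step, and your coefficient bookkeeping is correct.

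The gap is in the final step, which is where essentially all of hypothesis $(h_4)$ is consumed and which you leave as an assertion. You write that one "bounds this right-hand side in $L^r(\mathbb{R}^N)$ for $r$ in the range \eqref{rr}" by combining Sobolev embeddings, the Hardy--Littlewood--Sobolev estimate, \emph{elliptic regularity, and the decay of $u$}. Two problems. First, the integrability verification is not a routine afterthought: the paper devotes half the proof to it, splitting $\int_{\mathbb{R}^N}a^\gamma$ into $B_R$ and $\mathbb{R}^N\setminus B_R$; on $B_R$ it uses H\"older with a carefully chosen conjugate pair $\bigl(\sigma,\,2^*/((p-q)\gamma)\bigr)$ together with Lemma \ref{impotante} applied to $I_{\alpha_2}*|u|^p$, and this is precisely what produces the upper constraints $\gamma<2^*/(p-q)$ and $\gamma\le 2^*N/\bigl(pN+N(p-q)-2^*\alpha_2\bigr)$ in \eqref{rr}; on the exterior it uses the decay estimate $(I_{\alpha_2}*|u|^p)(x)\le C\,I_{\alpha_2}(x)\|u\|_p^p$ (cited from the literature) plus a radial computation $\int_R^\infty z^{N-1-(\alpha_1-\alpha_2)\gamma\sigma}\,dz<\infty$, which produces the lower constraint $\gamma>2^*N/\bigl(2^*(\alpha_1-\alpha_2)+N(p-q)\bigr)$. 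None of this appears in your proposal, and without it the contradiction with $a\notin L^r(\mathbb{R}^N)$ is not established. Second, your proposed route via pointwise decay of $u$ (through the reduced single-Choquard equation and a Moroz--Van Schaftingen argument, or elliptic regularity) is both unnecessary and a genuine additional burden: the paper never needs pointwise bounds or decay of $u$ itself, only $u\in L^\rho(\mathbb{R}^N)$ for $\rho\in[2,2^*]$ and the decay of the \emph{convolution} $I_{\alpha_2}*|u|^p$; establishing pointwise decay of $u$ here would be a nontrivial detour whose feasibility you do not demonstrate. As it stands, the proof is a correct reduction plus an unproven claim exactly where the hypotheses $\alpha_1\in(N-2,N)$, $p<2\alpha_1/(N-2)$, $\alpha_2>N-4$ and the interval \eqref{rr} must be checked to be consistent.
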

	\begin{proof}
		The proof follows arguing by contradiction. Assume that there exists $u \in \mathcal{N}^0$ with $\lambda = \lambda^*$ such that $u$ is a weak solution for the nonlocal elliptic problem \eqref{p1}. In particular, we obtain that
		\begin{equation*}
			\langle u,\varphi\rangle=\mu B^\prime(u)\varphi+\lambda^*A^\prime(u)\varphi \quad\text{for all}\ \varphi\in X.
		\end{equation*}
		Since $\Lambda_n(u) = \lambda^*$ we can apply Lemma \ref{2.10} showing that
		\begin{equation*}
			0=2\langle u,\varphi\rangle-2q\lambda^* A^{\prime}(u)\varphi-2p\mu B^\prime(u)\varphi \quad\text{for all}\ \varphi\in X.
		\end{equation*} 
		As a consequence, we obtain that
		\begin{equation*}
			(2-2q)\lambda^*A^\prime(u)\varphi+(2-2p)\mu B^\prime(u)\varphi=0.
		\end{equation*} 
		The last identity can be viewed as follows
		\begin{equation*}
			\int_{\mathbb{R}^N}[(2-2q)\lambda^*(I_{\alpha_1}*a|u|^q)a(x)|u|^{q-2}u+(2-2p)\mu(I_{\alpha_2}*|u|^p)|u|^{p-2}u]\varphi dx=0.
		\end{equation*}
		Recall also that $u>0$ a.e. in $\mathbb{R}^N$. In particular, we obtain that
		\begin{equation*}
			(2-2q)\lambda^*(I_{\alpha_1}*a|u|^q)a(x)|u|^{q-2}u=(2p-2)\mu(I_{\alpha_2}*|u|^p)|u|^{p-2}u\ \text{a.e in }\mathbb{R}^N.
		\end{equation*} 
		Therefore, we obtain that
		\begin{equation}\label{a(x)}
			a(x)=\frac{C(I_{\alpha_2}*|u|^p)|u|^{p-q}(x)}{(I_{\alpha_1}*a|u|^q)}.
		\end{equation}
		
		Now, as was proved in \cite[Lemma 5.1]{CPAA} and taking into account that $\alpha_2 \in (0, N)$, $2_{ \alpha_2} < p < 2^*_{\alpha_2}$, there exists a constant $\tilde{C} > 0$ in such way that
		\begin{equation}
			\left|\frac{(I_{\alpha_2}*|u|^p(x))}{I_{\alpha_2}(x)}-\|u\|^p_p \right|\leq \tilde{C}\|u\|^p_p.
		\end{equation}
		Under these conditions, we infer that there exists $C > 0$ such that $(I_{\alpha_2}*|u|^p) \leq C I_{\alpha_2}(x)\|u\|^p_p$ holds. In fact, we observe that
		\begin{equation*}
			\left|\frac{(I_{\alpha_2}*|u|^p(x))}{I_{\alpha_2}(x)} \right|=\left|\frac{(I_{\alpha_2}*|u|^p(x))}{I_{\alpha_2}(x)} -\|u\|^p_p+\|u\|^p_p \right|\leq C\|u\|^p_p+\|u\|^p_p.
		\end{equation*}
		On the other hand, by using \cite[Appendix A.4]{Moroz1},  given any function $f\in L^1_{\text{loc}}(\mathbb{R}^N)$ where $f$ is positive on a set of positive measure in $\mathbb{R}^N$, we can conclude that $I_{\alpha}*f$ is strictly positive everywhere in $\mathbb{R}^N$ with $\alpha \in (0, N)$. Furthermore, for each $x\in\mathbb{R}^N$, we infer that
		\begin{equation}\label{con}
			(I_{\alpha}*f)(x)\geq\frac{A_{\alpha}}{(2|x|)^{N-\alpha}}\int_{B_{2|x|}(x)}f(y)dy\geq \frac{C}{|x|^{N-\alpha}}.
		\end{equation}
		It is important to emphasize that the function $a|u|^q$ is positive a.e. in $\mathbb{R}^N$. Furthermore, by using assumption $(h_4)$, we see that $a\in L^{2/(2-q)}(\mathbb{R}^N)$. The last assertion implies that $a|u|^q\in L^1(\mathbb{R}^N)$. Hence,  using \eqref{con} and \eqref{a(x)}, we deduce that
		\begin{equation}
			a(x)=\frac{C(I_{\alpha_2}*|u|^p)|u|^{p-q}(x)}{(I_{\alpha_1}*a|u|^q)}\leq C(I_{\alpha_2}*|u|^p)|x|^{N-\alpha_1}|u|^{p-q}(x).
		\end{equation}
		As a consequence, we infer also that
		\begin{equation*}
			\int_{\mathbb{R}^N}[a(x)]^{\gamma} dx\leq\int_{\mathbb{R}^N} C(I_{\alpha_2}*|u|^p)^{\gamma}|x|^{\gamma(N-\alpha_1)}|u|^{\gamma(p-q)}(x)dx.
		\end{equation*}
		Now, the main objective is to ensure that the integral in the right hand side for the expression just above is finite. It is important to observe that
		\begin{equation}\label{ai}
			\int_{\mathbb{R}^N}[a(x)]^{\gamma} dx\leq\int_{B_R} C(I_{\alpha_2}*|u|^p)^{\gamma}|x|^{\gamma(N-\alpha_1)}|u|^{\gamma(p-q)}(x)dx+ \int_{\mathbb{R}^N\setminus B_R} C(I_{\alpha_2}*|u|^p)^{\gamma}|x|^{\gamma(N-\alpha_1)}|u|^{\gamma(p-q)}(x)dx.
		\end{equation}
		
		Now we shall split the proof into parts. Namely, we consider the following estimates:
		
		1) In the first part we shall consider the integral given just above in the set $B_R$. It is not hard to verify that 
		\begin{equation}
			\int_{B_R} (I_{\alpha_2}*|u|^p)^{\gamma}|x|^{\gamma(N-\alpha_1)}|u|^{\gamma(p-q)}(x)dx\leq C_R\int_{B_R} (I_{\alpha_2}*|u|^p)^{\gamma}|u|^{\gamma(p-q)}(x)dx.
		\end{equation}
		Now, by using the Hölder inequality Hardy-Littlewood-Sobolev inequality, see Lemma \ref{impotante}, and the continuous Sobolev embedding $X\hookrightarrow L^r(\mathbb{R}^N), r\in[2,2^*]$, we obtain that 
		\begin{eqnarray}\label{81}
			\int_{B_R}(I_{\alpha_2}*|u|^p)^{\gamma}|u|^{(p-q)\gamma}&\leq& \left(\int_{B_R}(I_{\alpha_2}*|u|^p)^{\gamma\sigma}dx\right)^{\frac{1}{\sigma}}\left(\int_{B_R}|u|^{\frac{2^*(p-q)\gamma}{(p-q)\gamma}}dx\right)^{\frac{(p-q)\gamma}{2^*}} \nonumber \\
			&=& C_1\|I_{\alpha_2}*|u|^p\|^{\gamma}_{\gamma\sigma}\|u\|^{(p-q)\gamma}_{2*}
			\leq C_2\|I_{\alpha_2}*|u|^p\|^{\gamma}_{\gamma\sigma}\|u\|^{(p-q)\gamma}\leq C_3\||u|^p\|^{\gamma}_s.
		\end{eqnarray}
		Recall also that $\gamma\sigma=Ns/(N-s\alpha_2 )$. Under these conditions, we need to prove that $s \in (1, N/\alpha_2)$ and $sp \in [1, 2^*]$. In order to do that we observe that 
		\begin{equation}\label{sigma1}
			\frac{2^*}{p-q}>\gamma \ \ \text{where} \ \  \sigma=\left(\frac{2^*}{(p-q)\gamma}\right)^\prime>1.
		\end{equation}
		In other words, we mention that 
		\begin{equation*}
			\gamma\sigma=\frac{Ns}{N-\alpha_2 s} \ \ \text{or} \ \  s=\frac{\gamma\sigma N}{N+\gamma\sigma\alpha_2}.\ \ 
		\end{equation*}
		Furthermore, we observe that 
		\begin{equation}\label{84}
			s=\frac{\gamma\sigma N}{N+\gamma\sigma\alpha_2}>1 \ \text{if and only if} \ \gamma>\frac{N}{\sigma(N-\alpha_2)}.
		\end{equation}
		According to \eqref{sigma1} we kwow that $\sigma=2^*/[2^*-(p-q)\gamma]$. In light of \eqref{84} and taking into account the last identity we obtain that
		\begin{equation}\label{ree}
			\gamma>\frac{2^* N}{2^*(N-\alpha_2)+N(p-q)}.
		\end{equation}
		It is not hard to see that the condition $s < N/\alpha_2$ is satisfied. It is important to mention  that $s = s(N, p, q, \gamma, \alpha_2)$. In fact, we can prove that
		\begin{equation*}
			s=\frac{2^*N\gamma}{N[2^*-(p-q)\gamma]+\gamma\alpha_2 2^*}.
		\end{equation*}
		Now, by using the continuous Sobolev embedding $X\hookrightarrow L^r(\mathbb{R}^N)$, $r = ps\in[1,2^*]$, we infer that
		\begin{equation*}
			ps\geq 1 \ \text{if and only if}\ \gamma\geq\frac{2^*N}{2^*Np+N(p-q)-2^*\alpha_2}.
		\end{equation*}
		On the other hand, we obtain the following assertion
		\begin{equation*}
			ps\leq 2^* \ \text{if and only if} \ \gamma\leq\frac{2^*N}{pN+N(p-q)-2^*\alpha_2}.
		\end{equation*}
		It is important to stress that $(2p-q)(N-2)/2 > \alpha_2$ which finishes the estimate in the first part. 
		
		\noindent 2) In this part we shall estimate the integral in the right hand side  for the identity \eqref{ai} in the set $\mathbb{R}^N\setminus B_R$. Firstly, by using the fact that $(I_{\alpha_2}*|u|^p)\leq C I_{\alpha_2}(x)\|u\|^p_p$ and \eqref{con}, we deduce that
		\begin{eqnarray}
			\int_{\mathbb{R}^N\setminus B_R}(I_{\alpha_2}*|u|^p)^{\gamma}|x|^{(N-\alpha_1)\gamma}|u|^{(p-q)\gamma}(x) dx&\leq& C\|u\|^{p\gamma}_p\int_{\mathbb{R}^N\setminus B_R}\frac{1}{|x|^{(N-\alpha_2)\gamma}}|x|^{(N-\alpha_1)\gamma}|u|^{(p-q)\gamma} dx \nonumber \\
			&=& C\|u\|^{p\gamma}_p\int_{\mathbb{R}^N\setminus B_R}\frac{1}{|x|^{(\alpha_1-\alpha_2)\gamma}}|u(x)|^{(p-q)\gamma} dx.
		\end{eqnarray}
		Once again, applying the Hölder inequality, the integral in right side hand in the expression just above can be estimate as follows:
		\begin{equation}
			\int_{\mathbb{R}^N\setminus B_R}\frac{1}{|x|^{(\alpha_1-\alpha_2)\gamma}}|u(x)|^{(p-q)\gamma} dx\leq C \left(\int_{\mathbb{R}^N\setminus B_R}\frac{1}{|x|^{(\alpha_1-\alpha_2)\gamma\sigma}}dx\right)^{\frac{1}{\sigma}}\left(\int_{\mathbb{R}^N\setminus B_R}|u|^{2^*}dx\right)^{\frac{(p-q)\gamma}{2^*}}.
		\end{equation}
		Furthermore, by using the Co-area Theorem \cite{lieb}, we obtain that
		\begin{equation}
			\left(\int_{\mathbb{R}^N\setminus B_R}\frac{1}{|x|^{(\alpha_1-\alpha_2)\gamma\sigma}}dx\right)^{\frac{1}{\sigma}}\left(\int_{\mathbb{R}^N\setminus B_R}|u|^{2^*}dx\right)^{\frac{(p-q)\gamma}{2^*}}\leq C\|u\|^{(p-q)\gamma}_{2^*}\left[\int^{+\infty}_R \frac{1}{z^{(\alpha_1-\alpha_2)\gamma\sigma}}z^{N-1}dz\right]^{\frac{1}{\sigma}}.
		\end{equation}
		It is important to mention that the last integral is finite if and only if $N<\gamma\sigma(\alpha_1-\alpha_2)$. Under these conditions, we obtain that
		\begin{equation*}
			\frac{2^* N}{2^*(\alpha_1-\alpha_2)+N(p-q)}<\gamma \quad \text{where} \quad \alpha_1>\alpha_2 \quad \text{and} \quad p<\frac{2\alpha_1}{N-2}.
		\end{equation*}
		Furthermore, we observe that $\|u\|^{p\gamma}_p$ is finite if and only if $2\leq p\leq 2^*$. Recall also that $2\alpha_1/(N-2)<2^*$ where $\alpha_1>N-2$. Hence, the last estimate implies that $2\leq p<2\alpha_1/(N-2)$. Notice also that $2_{\alpha_2}<p<2^*_{\alpha_2}$ which implies that $2<2^*_{\alpha_2}$ is verified if and only if $N-4<\alpha_2$. In other words, we choose $\alpha_2\in(N-4,N)$. As a consequence, we assume that  $p\in(2,\min\left\{2^*_{\alpha_2},2\alpha_1/(N-2)\right\})$. Recall also that $(2p-q)(N-2)/2 > \alpha_2$. Hence, $\alpha_2>N-4$ which implies that $2(N-4)/(N-2)<2p-q$. Therefore, we need to choose $\alpha_1\in(N-2,N)$, $p\in[2,\min\left\{2^*_{\alpha_2},2\alpha_1/(N-2)\right\})$, $\alpha_2\in(N-4,\min\left\{N,(2p-q)(N-2)/2\right\})$, $2(N-4)/(N-2)<2p-q$. Furthermore, we also assume that
		\begin{equation}
			\frac{2^* N}{2^*(\alpha_1-\alpha_2)+N(p-q)} <\gamma<\min\left\{\frac{2^*}{p-q},\frac{2^*N}{pN+N(p-q)-2^*\alpha_2}\right\}.
		\end{equation}
		Under these conditions, we obtain that
		\begin{equation}
			\int_{\mathbb{R}^N}[a(x)]^{\gamma} dx<+\infty.
		\end{equation}
		This leads to a contradiction with hypothesis $(h_4)$ with $\gamma=r$ where $a$ does not belong to $L^r(\mathbb{R}^N)$. Hence, the problem $(P_{\lambda^*})$ does not admit any solution $u\in\mathcal{N}^0$. This finishes the proof.
	\end{proof}

	\begin{lem}\label{solu}
		Suppose $(h_1)-(h_4)$ holds. Assume also that $\lambda = \lambda*$. Then the problem $(P_{\lambda^*})$ admits at least two solutions $w_{\lambda^*}\in\mathcal{N}^-$ and $u_{\lambda^*}\in\mathcal{N}^+$.
	\end{lem}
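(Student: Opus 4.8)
The plan is to run the minimization arguments of Lemmas \ref{2.5}, \ref{2.9} and Propositions \ref{weak1}, \ref{p2.8} at the endpoint $\lambda=\lambda^*$ and then to use Corollary \ref{cor} to discard the degenerate set $\mathcal{N}^0$. First I would check that $c_{\mathcal{N}^-}$ and $c_{\mathcal{N}^+}$ are still meaningful: $J$ is coercive and bounded below on $\mathcal{N}$ by Lemma \ref{coercive}, the set $\mathcal{N}^-\cup\mathcal{N}^0$ is closed for $\lambda\in(0,\lambda^*]$ by Lemma \ref{3.2}, and $c_{\mathcal{N}^+}<0$ even for $\lambda=\lambda^*$: indeed, for any fixed $u\in X\setminus\{0\}$ with $\Lambda_n(u)>\lambda^*$ the point $t^{n,+}(u)u$ lies in $\mathcal{N}^+$ and $R_e(t^{n,+}(u)u)<R_n(t^{n,+}(u)u)=\lambda^*$, so $J(t^{n,+}(u)u)<0$ by Remark \ref{2o1}, whence $c_{\mathcal{N}^+}\le J(t^{n,+}(u)u)<0$.

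Next, given a minimizing sequence $(v_k)\subset\mathcal{N}^-$ for $c_{\mathcal{N}^-}$, coercivity makes it bounded, so $v_k\rightharpoonup w_{\lambda^*}$ along a subsequence; the compact embeddings $X\hookrightarrow L^r(\mathbb{R}^N)$, $r\in[2,2^*)$, the growth bounds $A(u)\le C\|a\|_s^2\|u\|^{2q}$ and $B(u)\le C\|u\|^{2p}$ (which involve only subcritical Lebesgue norms), and the Dominated Convergence Theorem give $A(v_k)\to A(w_{\lambda^*})$ and $B(v_k)\to B(w_{\lambda^*})$; Lemma \ref{3.2} forces $\|v_k\|\ge C>0$, so $w_{\lambda^*}\ne 0$ (otherwise $0<C\le\|v_k\|^2=\lambda^*A(v_k)+\mu B(v_k)\to 0$). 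Strong convergence $v_k\to w_{\lambda^*}$ in $X$ is obtained by distinguishing $\Lambda_n(w_{\lambda^*})>\lambda^*$ from $\Lambda_n(w_{\lambda^*})=\lambda^*$: in the first case the fibering map $t\mapsto R_n(tw_{\lambda^*})$ still has a strict maximum above $\lambda^*$, so Proposition \ref{2.3} yields $t^{n,-}(w_{\lambda^*})w_{\lambda^*}\in\mathcal{N}^-$ and the weak-lower-semicontinuity comparison of Lemma \ref{2.5} applies verbatim; in the second case one uses $\|v_k\|^2=\lambda^*A(v_k)+\mu B(v_k)\to\lambda^*A(w_{\lambda^*})+\mu B(w_{\lambda^*})$ together with $\|w_{\lambda^*}\|^2\le\liminf\|v_k\|^2$ and the fact that $\Lambda_n(w_{\lambda^*})=\lambda^*$ identifies $w_{\lambda^*}$ with a minimizer of $\Lambda_n$, to force $\|v_k\|\to\|w_{\lambda^*}\|$, hence $v_k\to w_{\lambda^*}$ in the Hilbert space $X$. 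By closedness, $w_{\lambda^*}\in\mathcal{N}^-\cup\mathcal{N}^0$ and $J(w_{\lambda^*})=c_{\mathcal{N}^-}$. The same scheme applied to $Q_e$ and to the identity $\lambda A(u_k)=(\cdots)B(u_k)-(\cdots)J(u_k)$ from Lemma \ref{2.9}, using $c_{\mathcal{N}^+}<0$ to rule out $u_k\rightharpoonup 0$, produces $u_{\lambda^*}\in\mathcal{N}^+\cup\mathcal{N}^0$ with $J(u_{\lambda^*})=c_{\mathcal{N}^+}<0$.

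It then remains to show that $w_{\lambda^*}$ and $u_{\lambda^*}$ are weak solutions of $(P_{\lambda^*})$ lying, respectively, in $\mathcal{N}^-$ and $\mathcal{N}^+$. If the limit already sits in $\mathcal{N}^-$ (resp. $\mathcal{N}^+$), Lemmas \ref{curva}, \ref{curva2} furnish the admissible curves $f(w)(w_{\lambda^*}+w)\in\mathcal{N}^-$ and $h(w)(u_{\lambda^*}+w)\in\mathcal{N}^+$, and the Ekeland Variational Principle together with Fatou's Lemma reproduces the arguments of Propositions \ref{weak1} and \ref{p2.8}, giving that the minimizer is a weak solution. If instead, say, $w_{\lambda^*}\in\mathcal{N}^0$, then — since every point of $\mathcal{N}^0$ is a positive rescaling of a minimizer of the $0$-homogeneous functional $\Lambda_n$ — we have $\Lambda_n(w_{\lambda^*})=\lambda^*$, so Lemma \ref{2.10} gives $2\langle w_{\lambda^*},\varphi\rangle-2p\mu B^\prime(w_{\lambda^*})\varphi-2q\lambda^*A^\prime(w_{\lambda^*})\varphi=0$ for all $\varphi\in X$; combining this with the weak-solution identity for $w_{\lambda^*}$ — obtained by adapting the Ekeland-plus-curve argument of Propositions \ref{weak1}, \ref{p2.8} to the minimizing sequence $(v_k)\subset\mathcal{N}^-$ — yields $(2-2q)\lambda^*A^\prime(w_{\lambda^*})\varphi=(2p-2)\mu B^\prime(w_{\lambda^*})\varphi$ and hence a weak solution belonging to $\mathcal{N}^0$, which is impossible by Corollary \ref{cor}. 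Therefore $w_{\lambda^*}\in\mathcal{N}^-$ and $u_{\lambda^*}\in\mathcal{N}^+$, completing the proof. The hard part is exactly this last dichotomy: at $\lambda=\lambda^*$ the set $\mathcal{N}^0$ is nonempty, and the curve constructions of Lemmas \ref{curva}, \ref{curva2} — and with them the passage from a constrained minimizer to a free critical point — degenerate on $\mathcal{N}^0$, since the first-order quantity $(2-2q)\|\cdot\|^2-(2p-2q)\mu B(\cdot)$ that must stay bounded away from zero there vanishes; so the exclusion of $\mathcal{N}^0$ cannot be carried out by the variational machinery alone and must go through the decay/non-existence estimate of Corollary \ref{cor} and the identity of Lemma \ref{2.10}.
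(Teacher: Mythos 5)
Your route is genuinely different from the paper's. You minimize $J$ over $\mathcal{N}^{\pm}$ directly at $\lambda=\lambda^*$ and then try to exclude the possibility that the minimizer lands in $\mathcal{N}^0$. The paper instead takes $\lambda_k\nearrow\lambda^*$, uses the already-established solutions $w_{\lambda_k}\in\mathcal{N}^-_{\lambda_k}$ of $(P_{\lambda_k})$ from Proposition \ref{weak1}, shows $(w_{\lambda_k})$ is bounded (via coercivity and the monotonicity of $\lambda\mapsto c_{\mathcal{N}^-_\lambda}$), upgrades weak to strong convergence by testing the weak-solution identity \eqref{fracal} with $w_{\lambda_k}$ and $w_{\lambda^*}$, and only then concludes $w_{\lambda^*}\in\mathcal{N}^-\cup\mathcal{N}^0$ and invokes Corollary \ref{cor}. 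The point of the approximation is that the weak-solution identity is inherited in the limit \emph{before} one needs to know whether the limit lies in $\mathcal{N}^-$ or $\mathcal{N}^0$.

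This is exactly where your argument has a genuine gap. Corollary \ref{cor} excludes \emph{weak solutions} in $\mathcal{N}^0$; it says nothing about mere minimizers or limits of minimizing sequences that happen to land in $\mathcal{N}^0$. To apply it in your dichotomy you must first prove that $w_{\lambda^*}\in\mathcal{N}^0$ is a weak solution, and you propose to do this "by adapting the Ekeland-plus-curve argument of Propositions \ref{weak1}, \ref{p2.8}." But that argument hinges on the uniform bound $|f_k'(0)|\le C$, whose proof divides by $(1-q)\|w_k\|^2-(p-q)\mu B(w_k)+q\|w_k\|/k$ and requires this quantity to stay below a fixed negative constant $-C_4$; that in turn uses precisely that the strong limit lies in $\mathcal{N}^-$, i.e. $J''(w)(w,w)<0$ strictly. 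If $w_{\lambda^*}\in\mathcal{N}^0$ this quantity tends to $0$ at an uncontrolled rate relative to $1/k$, the bound on $f_k'(0)$ is lost, and the passage $\frac{1}{k}(|f_k'(0)|\|w_k\|+\|\varphi\|)\to 0$ fails. Your own closing sentence concedes that the machinery "degenerates on $\mathcal{N}^0$," which is in direct tension with the step where you rely on it to manufacture a weak solution in $\mathcal{N}^0$. Separately, your strong-convergence argument in the case $\Lambda_n(w_{\lambda^*})=\lambda^*$ is not a proof: from $\|v_k\|^2=\lambda^*A(v_k)+\mu B(v_k)\to\lambda^*A(w_{\lambda^*})+\mu B(w_{\lambda^*})$ and $\|w_{\lambda^*}\|^2\le\liminf\|v_k\|^2$ one gets $R_n(w_{\lambda^*})\le\lambda^*$ in the case of a strict inequality, which is perfectly consistent with $\Lambda_n(w_{\lambda^*})=\max_{t>0}R_n(tw_{\lambda^*})=\lambda^*$, so no contradiction is produced and norm convergence does not follow. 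Both defects are repaired by the paper's approximation-in-$\lambda$ scheme, which you should adopt (or else supply a new argument for the weak-solution property of minimizers degenerating into $\mathcal{N}^0$).
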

	\begin{proof}
		Firstly, we shall prove the existence of a solution  $w_{\lambda^*}\in\mathcal{N}^-$. Consider $(\lambda_k)_{k\in\mathbb{N}}\in (0,\lambda^*)$ such that $\lambda_k\rightarrow\lambda^*$. Let $(w_{\lambda_k})_{k\in\mathbb{N}}\subset\mathcal{N}^-_{\lambda_k}$ be a weak solution to problem \eqref{p1} for each $\lambda_k \in (0, \lambda^*)$, see Proposition \ref{weak1}. Therefore, we obtain that $J_{\lambda_k}(w_{\lambda_k})=c_{\mathcal{N}^-_{\lambda_k}} $, $J^\prime(w_{\lambda_k})\varphi=0, \varphi \in X$ and $J^{\prime\prime}(w_{\lambda_k})(w_{\lambda_k},w_{\lambda_k})<0$. Recall that $c_{\mathcal{N}^-_{\lambda_k}}$ is given by \eqref{c1} with $\lambda = \lambda_k$. Here the energy functional with $\lambda=\lambda_k$ is denoted by $J_{\lambda_k}$. Now, we assume by contradiction that $\|w_{\lambda_k}\|\rightarrow+\infty$. Notice that
		\begin{equation}
			J_{\lambda_k}(w_{\lambda_k})=c_{\mathcal{N}^-_{\lambda_k}}=\left(\frac{1}{2}-\frac{1}{2p}\right)\|w_{\lambda_k}\|^2+\left(\frac{1}{2p}-\frac{1}{2q}\right)\lambda_k A(w_{\lambda_k}).
		\end{equation}
		Using the Hardy-Littlewood-Sobolev inequality, see Proposition \ref{hls}, and that the continuous Sobolev embedding $X\hookrightarrow L^r(\mathbb{R}^N), r\in[2,2^*]$, we infer that
		\begin{equation}
			c_{\mathcal{N}^-_{\lambda_k}}\geq\left(\frac{1}{2}-\frac{1}{2p}\right)\|w_{\lambda_k}\|^2+\left(\frac{1}{2p}-\frac{1}{2q}\right)C_3\lambda_k\|a\|^2_s\|w_{\lambda_k}\|^{2q}.
		\end{equation}
		On the other hand, we observe that $\lambda \mapsto c_{\mathcal{N}^-}$ is a decreasing function, see \cite[Proposition 3.1 and Proposition 3.3]{fractional}. In particular, using the last estimate and taking into account that $\|w_{\lambda_k}\|\rightarrow+\infty$, we obtain that there exists $C > 0$ such that  $$+\infty > C \geq \displaystyle\lim_{k\rightarrow+\infty}c_{\mathcal{N}^-_{\lambda_k}}=\displaystyle\lim_{k\rightarrow+\infty}J(w_{\lambda_k})\geq+\infty.$$
		This is a contradiction proving that the sequence $(w_{\lambda_k})_{k\in\mathbb{N}}$ is a bounded in $X$. Hence,  $w_{\lambda_k}\rightharpoonup w_{\lambda^*}$ in $X$ for some  $w_{\lambda^*} \in X$. As a consequence, we obtain that $w_{\lambda_k}\rightarrow w_{\lambda^*}$ in $L^r(\mathbb{R}^N)$ for any $r\in[2,2^*)$. Recall also that $w_{\lambda_k}\rightarrow w_{\lambda^*}$ almost everywhere in $\mathbb{R}^N$ and there exist $h_r \in L^r(\mathbb{R}^N)$ such that $|w_{\lambda_k}| \leq h_r$.
		Notice also that $(w_{\lambda_k})_{k\in\mathbb{N}}$ is a weak solution to Problem \eqref{p1} for each $\lambda_k$. Hence, we deduce the following assertion:
		\begin{equation}\label{fracal}
			\langle w_{\lambda_k},\varphi\rangle-\mu B^\prime(w_{\lambda_k})\varphi=\lambda_k A^\prime(w_{\lambda_k})\varphi, \,\, \varphi \in X.
		\end{equation}
		Now, using the same ideas discussed in the proof of Lemma \ref{2.10} and taking into account the Dominated Convergence Theorem and Fatou's Lemma, we infer that 
		\begin{equation*}
			\langle w_{\lambda^*},\varphi\rangle-\mu B(w_{\lambda^*})\varphi\geq\lambda^* A^\prime(w_{\lambda^*})\varphi, \,\, \mbox{for all} \,\, \varphi \in X.
		\end{equation*}
		Furthermore, by using the testing functions $w_{\lambda_k}=\varphi$ and $w_{\lambda^*}=\varphi$, we deduce that 
		\begin{eqnarray*}
			\langle w_{\lambda_k},w_{\lambda_k}\rangle-\mu B^\prime(w_{\lambda_k})w_{\lambda_k}=\lambda_k A^\prime(w_{\lambda_k})w_{\lambda_k}, \quad
			\langle w_{\lambda_k},w_{\lambda^*}\rangle-\mu B^\prime(w_{\lambda_k})w_{\lambda^*}=\lambda_k A^\prime(w_{\lambda_k})w_{\lambda^*}.
		\end{eqnarray*}
		As a consequence, we see that 
		\begin{equation*}
			\langle w_{\lambda_k},w_{\lambda_k}-w_{\lambda^*}\rangle-\mu B^\prime(w_{\lambda_k})w_{\lambda_k}+\mu B^\prime(w_{\lambda_k})w_{\lambda^*}=\lambda_k A^\prime(w_{\lambda_k})w_{\lambda_k}-\lambda_k A^\prime(w_{\lambda_k})w_{\lambda^*}.
		\end{equation*}
		Using the last estimate we obtain that 
		\begin{eqnarray}
			\limsup_{k\rightarrow+\infty}\langle w_{\lambda_k}, w_{\lambda_k}- w_{\lambda^*}\rangle&\leq&\limsup_{k\rightarrow+\infty}(\lambda_k A^\prime(w_{\lambda_k})w_{\lambda_k})+\limsup_{k\rightarrow+\infty}(-\lambda_k A^\prime(w_{\lambda_k})w_{\lambda^*}) \nonumber \\
			&=&\lambda^* A^{\prime}(w_{\lambda^*})w_{\lambda^*}-\liminf_{k\rightarrow +\infty}\lambda_k A^\prime(w_{\lambda_k})w_{\lambda^*} 
			\leq\lambda^* A^{\prime}(w_{\lambda^*})w_{\lambda^*}-\lambda^* A^{\prime}(w_{\lambda^*})w_{\lambda^*}=0.
		\end{eqnarray}
		Hence, we deduce that 
		\begin{equation}
			\limsup_{k\rightarrow+\infty}\|w_{\lambda_k}-w_{\lambda^*}\|^2\leq\limsup_{k\rightarrow+\infty}\langle w_{\lambda_k},w_{\lambda_k}-w_{\lambda^*} \rangle+\limsup_{k\rightarrow+\infty}-\langle w_{\lambda^*}, w_{\lambda_k}-w_{\lambda^*}\rangle\leq 0.
		\end{equation}
		In particular, we obtain that $w_{\lambda_k}\rightarrow w_{\lambda^*}$ in $X$. Therefore, we obtain that 
		\begin{eqnarray*}
			J^\prime(w_{\lambda^*})w_{\lambda^*}=0, \quad
			J^{\prime\prime}(w_{\lambda^*})(w_{\lambda^*},w_{\lambda^*})\leq 0.
		\end{eqnarray*}
		Notice also that $w_{\lambda^*} \neq 0$. Recall also that the function $w_{\lambda^*}$ satisfies
		\begin{eqnarray*}
			\|w_{\lambda^*}\|^2-\lambda^* A(w_{\lambda^*})-\mu B(w_{\lambda^*})=0, \quad
			\langle w_{\lambda^*},\varphi\rangle -\mu B^\prime(w_{\lambda^*})\varphi-\lambda^* A^\prime(w_{\lambda^*})\varphi\geq 0, \,\, \mbox{for all} \,\, \varphi\geq 0.
		\end{eqnarray*}
		As a consequence, we obtain that $w_{\lambda^*}\in \mathcal{N}^-\cup\mathcal{N}^0$. Now, arguing as was done in the proof of Proposition \ref{weak1}, we prove that $w_{\lambda^*}$ is a weak solution to the problem \eqref{p1} where $\lambda=\lambda^*$. Furthermore, by using Corollary \ref{cor}, we also infer that $w_{\lambda^*}\in\mathcal{N}^-$. Moreover, we observe that
		\begin{equation}
			J_{\lambda^*}(w_{\lambda^*})=\lim_{k\rightarrow+\infty}J_{\lambda_k}(w_{\lambda_k})=\lim_{k\rightarrow+\infty}c_{\mathcal{N}^-_{\lambda_k}}.
		\end{equation}
		Now, by similar arguments discussed just above we can also obtain the solution $u_{\lambda^*}\in\mathcal{N}^+$ with $\lambda = \lambda^*$. This concludes the proof.
	\end{proof}

	\section{The proof of main results}\label{s3}
	
	\subsection{The proof of Theorem \ref{theorem1}} The proof of Theorem \ref{theorem1} follows by considering the existence of a minimizer for the minimization problem given by \eqref{c2}. According to  Proposition \ref{2.55} we obtain that $\mathcal{N}^0$ is empty for each $\lambda\in(0,\lambda^*)$. For this case, we obtain that $\overline{\mathcal{N}}^+ = \mathcal{N}^+ \cup \{0\}$. Hence, any minimizer for the minimization problem \eqref{c2} belongs to $\mathcal{N}^+$. Here was used that $c_{\mathcal{N}^+} < 0$, see Lemma \ref{2.8}. Recall also that any minimizer $u$ for the minimization problem \eqref{c2} give us a weak solution to the Problem \eqref{p1}, see Proposition \ref{p2.8} and Lemma \ref{p2.8}. This ends the proof.
	
	\subsection{The proof of Theorem \ref{theorem2} (i)} The proof of Theorem \ref{theorem2} follows proving that the minimization problem \eqref{c1} is attained by some function $u \in \mathcal{N}^-$. In order to do that, by using Lemma \ref{2.5} and Proposition \ref{weak1}, for each $\lambda\in(0,\lambda_*)$ there exists $v\in\mathcal{N}^-$ such that $v$ is a weak solution to the nonlocal elliptic problem \eqref{p1}. Here we assume that $\lambda \in (0, \lambda_*)$. Under these conditions, we know that $t^{n,-}(v) > t_e(v)$, see Figures \ref{figure1} and \ref{figure2} . As a consequence, $\lambda = R_n(t^{n,-}(v)v) < R_e(t^{n,-}(v) v)$, see Remark \ref{2.4} and Figure \ref{figure2}. Hence, by using Remark \ref{2o1}, we obtain that $c_{\mathcal{N}^-} = J(v) = J(t^{n,-}(v)v) > 0$. This ends the proof for the item $(i)$.
	
	\subsection{The proof of Theorem \ref{theorem2}(ii)} Here we assume that $\lambda=\lambda_*$. Notice that the problem \eqref{p1} has at least a weak solution $v\in\mathcal{N}^-$, see Lemma \ref{2.5} and Proposition \ref{weak1}. Furthermore, by using Remark \ref{lambda_e} together with Lemma \ref{lema2.1}, there exists $w\in X\setminus\{0\}$ such that $\lambda_*=\Lambda_e(w)=R_e(t_e(w)w)=R_n(t^{n,-}(w)w)$. Therefore, by using Remark \ref{2.4}, we know that $R_e(tw)=R_n(tw)$ if and only if $t=t_e(w)$. In particular, we observe that $t_e(w)=t^{n,-}(w)$. Thus, $R_n(t^{n,-}(w)w)=R_e(t^{n,-}(w)w)=\lambda_*$. Now, by using Remark \ref{2o1}, we obtain that $J(t^{n,-}(w)w)=0$.  Furthermore, by using the fact that $t^{n,-}(w)w\in\mathcal{N}^-$, we have $c_{\mathcal{N}^-}=J(v)\leq J(t^{n,-}(w)w)=0$. On the other hand, we know that $\lambda=\lambda_*=\inf_{u\in X\setminus\{0\}}\Lambda_e(u)\leq\Lambda_e(v)=R_e(t_e(v)v)$. Recall also that $\lambda_*=R_n(t^{n,-}(v)v)=R_n(v)\leq R_e(t_e(v)v)$. The last assertion implies that $t_e(v)\leq 1=t^{n,-}(v)$. In particular, we have $\lambda_* = R_n(v)\leq R_e(v)$. Therefore, we obtain that $J(v)\geq 0$. Hence, using the estimate given above, we infer that $c_{\mathcal{N}} = J(v)=0$.

	\subsection{The proof of Theorem \ref{theorem2}(iii)}
	
	Here we assume that $\lambda \in (\lambda_*,\lambda^*)$. Under these conditions, we know that the problem \eqref{p1} has at least one weak solution $v\in\mathcal{N}^-$, see Lemma \ref{2.5} and Proposition \ref{weak1}. Let us fix a function $u\in X\setminus \{0\}$ such that $\lambda_*\leq\Lambda_e(u)=R_e(t_e(u)u)<\lambda$. Notice also that $\lambda=R_n(t^{n,-}(u)u)$. Hence, we obtain that $t^{n,-}(u)\in(0,t_e(u))$ and $R_e(tu)<R_n(tu)$, see Figure \ref{figure2}. In particular, for $t=t^{n,-}(u)$, we obtain that $R_e(t^{n,-}(u)u)<R_n(t^{n,-}(u)u)=\lambda$. Therefore, by using Remark \ref{2o1} and taking into account that $t^{n,-}(u)u\in\mathcal{N}^-$, we obtain that $c_{\mathcal{N}^-}\leq J(t^{n,-}(u)(u)) < 0$.
	
	\subsection{The proof of Theorem \ref{theorem3}} Firstly, we observe that does not exist any weak solution $u \in \mathcal{N}^0$ for $\lambda = \lambda^*$ for the problem \eqref{p1}, see Corollary \ref{cor}. Hence, the minimization problems given by \eqref{c1} and \eqref{c2} are attained by functions $u \in \mathcal{N}^+$ and $v \in \mathcal{N}^-$ with $\lambda = \lambda^*$. Under these conditions, we obtain $u$ and $v$ are weak solutions to the nonlocal elliptic problem \eqref{p1}, see Lemma \ref{solu}. This ends the proof.



\end{document}